\title{ Solitons of Curve Shortening Flow and Vortex Filament Equation\\ \Large UROP+ Final Paper, Summer 2017}
\author{Bernardo Antonio Hernandez Adame\\Mentor: Jiewon Park\\Supervisor: Professor Tobias Colding}
\newtheorem{theorem}{Theorem}[subsection]
\newtheorem{lemma}[theorem]{Lemma}
\newtheorem{proposition}[theorem]{Proposition}
\newtheorem{corollary}[theorem]{Corollary}
\newtheorem{definition}[theorem]{Definition}
\newtheorem{claim}[theorem]{Claim}
\newenvironment{proof}[1][Proof]{\begin{trivlist}
		\item[\hskip \labelsep {\bfseries #1}]}{\end{trivlist}}
\newenvironment{example}[1][Example]{\begin{trivlist}
		\item[\hskip \labelsep {\bfseries #1}]}{\end{trivlist}}
\newenvironment{remark}[1][Remark]{\begin{trivlist}
		\item[\hskip \labelsep {\bfseries #1}]}{\end{trivlist}}
\newcommand{\qed}{\nobreak \ifvmode \relax \else
	\ifdim\lastskip<1.5em \hskip-\lastskip
	\hskip1.5em plus0em minus0.5em \fi \nobreak
	\vrule height0.75em width0.5em depth0.25em\fi}
\DeclareMathOperator{\sech}{sech}
\begin{document}
	\begin{titlingpage}
		\maketitle
		\begin{abstract}
			In this paper we explore the nature of self-similar solutions of the Curve Shortening Flow and the Vortex Filament Equation, also known as the Binormal Flow. We explore some of their fundamental conservation properties and describe the behavior of their self-similar solutions. For Curve Shortening Flow we mainly expose the results of Huisken, Grayson, and Halldorsson concerning the equation's basic properties and self-similar solutions in the plane. For the Vortex Filament Equation we present the results by Banica and Vega, Arms and Hama, and Hasimoto. We also derive the evolution equations of the normal, binormal and tangent vectors in the Frenet frame for the vortex filament as well as those of curvature and torsion. We give a proof that circles are the only planar translating self-similar solutions and also derive a system of ordinary differential equations that govern the behavior for rotating self-similar solutions.
		\end{abstract}
	\end{titlingpage}
	\tableofcontents
	
	\newpage
	\section{Preliminaries}
	\subsection{Sobolev Spaces}
	
	We begin with an understanding of Sobolev spaces which are the natural extension of smooth functions that the solutions of PDEs will live in. These spaces serve will also become important in the formulation of dilating self-similar solutions of the Vortex Filament Equation.
	
	 To explain Sobolev Spaces it is first necessary for us to introduce the notion of the weak derivative of a function.
	
	Let $ (\Omega, \Sigma, \mu) $ be a measurable space, $f$ be a function on $\Omega$. If $ f\in L^{p}(\Omega)$ $ (1\leq p<\infty) $ then \[  \lVert f\rVert_{p = }(\int_{\Omega}\lvert f\rvert^{p}d\mu)^{\frac{1}{p}}<\infty\]. If $ p =\infty $ then the norm is instead the essential supremum, defined as follows:
	\begin{definition}
		The essential supremum of a measurable function $ f:\Omega\rightarrow\mathbb{R} $ with measure $ \mu $ is the smallest number $ \alpha $ such that the set \[ \{x\big||f(x)|>\alpha\} \] has measure zero. If no such $ \alpha $ exists then it is taken to be infinity.
	\end{definition}
	 We shall denote this norm as $\lVert f \rVert_{\infty} =\text{ess sup}_{\Omega}\lvert f \rvert $.
	
	To say $ f $ is in $ C^{\infty}_{c}(\Omega) $ means that f is indefinitely differentiable and compactly supported.
	
	\begin{definition}[Weak Derivative]
		Let $ \Omega\subset \mathbb{R}^{n} $ be an open set.  a function $ f\in L^{1}_{loc}(\Omega) $ is said to be weakly differentiable to the $i$th variable if there exists $ g_{i}\in L^{1}_{loc}(\Omega) $ such that \[ \int_{\Omega}f\partial_{i}\phi dx = -\int_{\Omega}g_{i}\phi dx \] for all $ \phi\in C^{\infty}_{c}(\Omega) $. We then call $ g_{i} $ the weak $ i $th partial derivative of $ f $, denoted as usual $ \partial_{i}f $.
	\end{definition}
	
	 It is clear from the definition that the weak derivative coincides with the common pointwise derivative of a continuously differentiable function. However, a function that is not pointwise differentiable almost everywhere can still have a weak derivative. For higher order derivatives the following definition holds.
	
	\begin{definition}
		Let $ \alpha = (\alpha_{1},...,\alpha_{n})\in \mathbb{N}^{n} $ be a multi-index and $ \lvert \alpha\rvert = \alpha_{1}+...+\alpha_{n}=k $. A function $ f\in L^{1}_{loc}(\Omega) $ has a weak derivative of order $ k $, denoted by $ D^{\alpha}f $, if \[ \int_{\Omega} (D^{\alpha}f)\phi dx = (-1)^{\lvert\alpha\rvert}\int_{\Omega}f(D^{\alpha}\phi) dx  \] for all $ \phi\in C^{\infty}_{c}(\Omega) $.
	\end{definition}
	
	We now present some definitions and basic properties of Sobolev spaces without proof. We direct the reader who seeks proofs of theorems as well as a more thorough understanding of Sobolev spaces to chapter 5 of Evans' work on PDEs ([1]) from where the notation and theorems are taken.
	
	\begin{definition}[Sobolev Space]
		We denote with \[ W^{k,p}(\Omega) \] as the space of all locally summable functions $ f:\Omega\rightarrow\mathbf{R} $ such that for each multi-index $ \alpha $ with $ \lvert\alpha\rvert\leq k $, $ D^{\alpha}f $ exists in the weak sense and belongs to $ L^{p}(\Omega) $. If p = 2 it is common to use the notation\[ H^{k}(\Omega) = W^{k,2}(\Omega) \text{ } (k =0,1,...) \]
	\end{definition}

	\begin{definition}[Norm of Sobolev Spaces]
			Let $ f\in W^{k,p}(\Omega) $, we define the norm to be \[ \lVert f \rVert_{W^{k,p}}:=\begin{cases}
				(\Sigma_{\lvert\alpha\rvert\leq k}\int_{\Omega}\lvert D^{\alpha}f\rvert^{p}dx)^{1/p} &(1\leq p <\infty)\\
				\Sigma_{\lvert\alpha\rvert\leq k} \text{ess sup}_{\Omega}\lvert D^{\alpha}f\rvert &(p = \infty) 
				\end{cases}
				\]
	\end{definition}
	
	\begin{definition}[Convergence in Sobolev Spaces]
		(i) Let $ f,\{f_{m}\}^{\infty}_{m = 1} \in W^{k,p}(\Omega) $. We say that $ f_{m} $ converges to $ f $ in $ W^{k,p}(\Omega) $, written \[f_{m}\rightarrow f \mbox{ in } W^{k,p}(\Omega),\] given that \[\lim_{m\rightarrow\infty} \rVert f_{m}-f\lVert_{W^{k,p}(\Omega)} = 0.\]
		
		(ii) We denote by \[ W^{k,p}_{0}(\Omega) \] as the closure of $ C^{\infty}_{c}(\Omega) $ in $ W^{k,p}(\Omega) $.  That is to say that $ f\in W^{k,p}_{0}(\Omega) $ if there exists a sequence $ f_{m}\in C^{\infty}_{c}(\Omega) $ such that $ f_{m}\rightarrow f $ in $ W^{k,p}(\Omega) $.
	\end{definition}
	
	The Fourier transform of a function is defined as follows:
	\begin{definition}
		Let $ f:\mathbb{R}^{n}\rightarrow\mathbb{C} $ be a measurable function. Then the corresponding Fourier transform is defined as \[ \hat{f}(\zeta) = \int_{-\infty}^{\infty}f(x)e^{-2\pi i x\cdot\zeta}d^{n}x \].
	\end{definition}
	
	\begin{theorem}[$ H^{k} $ by Fourier Transform]
		Let $ f\in L^{2}(\mathbf{R}^{n}) $, then $ f\in H^{k}(\mathbf{R}^{n}) $ if and only if \[(1+\lvert y \rvert^{k})\hat{f}\in L^{2}(\mathbf{R}^{n}) \].
		
		In addition there exists a positive constant C such that \[\frac{1}{C}\lVert f\rVert_{H^{k}(\mathbf{R}^{n})}\leq \lVert (1+\lvert y \rvert^{k})\hat{f}\rVert_{L^{2}(\mathbb{R}^{n})}\leq C\lVert f \rVert_{H^{k}(\mathbf{R}^{n})} \] for each $ f\in H^{k}(\mathbf{R}^{n}) $.
	\end{theorem}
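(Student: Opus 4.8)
The plan is to reduce the whole statement to two standard facts about the Fourier transform, and then to a single pointwise comparison of polynomials. The two facts I would rely on are Plancherel's theorem, which says the transform is an $L^{2}$ isometry so that $\lVert g\rVert_{L^{2}(\mathbb{R}^{n})}=\lVert\hat{g}\rVert_{L^{2}(\mathbb{R}^{n})}$ for all $g\in L^{2}(\mathbb{R}^{n})$, and the differentiation law, which with the normalization of the definition above reads $\widehat{D^{\alpha}f}(\zeta)=(2\pi i\zeta)^{\alpha}\hat{f}(\zeta)$ for each multi-index $\alpha$. The differentiation law is immediate for Schwartz functions by integration by parts; for a general $f$ possessing weak derivatives I would obtain it by testing against $\phi\in C^{\infty}_{c}(\mathbb{R}^{n})$ and invoking the definition of the weak derivative together with the known transform of $D^{\alpha}\phi$.

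For the forward implication and the right-hand inequality simultaneously, suppose $f\in H^{k}(\mathbb{R}^{n})$. For every $\alpha$ with $\lvert\alpha\rvert\leq k$ the weak derivative $D^{\alpha}f$ lies in $L^{2}$, so by the two facts above
\[ \lVert D^{\alpha}f\rVert_{L^{2}}^{2}=\lVert\widehat{D^{\alpha}f}\rVert_{L^{2}}^{2}=(2\pi)^{2\lvert\alpha\rvert}\int_{\mathbb{R}^{n}}\lvert\zeta^{\alpha}\rvert^{2}\lvert\hat{f}(\zeta)\rvert^{2}\,d\zeta. \]
Summing over $\lvert\alpha\rvert\leq k$ gives
\[ \lVert f\rVert_{H^{k}}^{2}=\int_{\mathbb{R}^{n}}P(\zeta)\lvert\hat{f}(\zeta)\rvert^{2}\,d\zeta,\qquad P(\zeta):=\sum_{\lvert\alpha\rvert\leq k}(2\pi)^{2\lvert\alpha\rvert}\lvert\zeta^{\alpha}\rvert^{2}. \]

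The crux of the argument is then the pointwise estimate: there is a constant $C=C(n,k)>0$ with
\[ \frac{1}{C}\left(1+\lvert\zeta\rvert^{k}\right)^{2}\leq P(\zeta)\leq C\left(1+\lvert\zeta\rvert^{k}\right)^{2}\qquad\text{for all }\zeta\in\mathbb{R}^{n}. \]
I would prove this in two comparisons. First, expanding $(1+\lvert\zeta\rvert^{2})^{k}=\sum_{\lvert\alpha\rvert\leq k}c_{\alpha}\zeta^{2\alpha}$ with strictly positive multinomial coefficients $c_{\alpha}$ exhibits $P$ and $(1+\lvert\zeta\rvert^{2})^{k}$ as nonnegative combinations of exactly the same monomials, so each is bounded by a constant multiple of the other. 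Second, $(1+\lvert\zeta\rvert^{2})^{k}$ and $(1+\lvert\zeta\rvert^{k})^{2}$ are both strictly positive, agree to leading order $\lvert\zeta\rvert^{2k}$ at infinity and equal $1$ at the origin, so their ratio is continuous and bounded above and below by positive constants. Combining the displayed identity with this estimate yields both the norm equivalence and the fact that finiteness of $\lVert f\rVert_{H^{k}}$ forces $(1+\lvert y\rvert^{k})\hat{f}\in L^{2}$.

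For the converse, assume $(1+\lvert y\rvert^{k})\hat{f}\in L^{2}(\mathbb{R}^{n})$. The key inequality shows $\zeta^{\alpha}\hat{f}\in L^{2}$ for every $\lvert\alpha\rvert\leq k$, so I may define $g_{\alpha}\in L^{2}$ as the inverse Fourier transform of $(2\pi i\zeta)^{\alpha}\hat{f}$ and then verify that $g_{\alpha}$ is the weak derivative $D^{\alpha}f$ by checking $\int_{\mathbb{R}^{n}}g_{\alpha}\phi\,dx=(-1)^{\lvert\alpha\rvert}\int_{\mathbb{R}^{n}}f\,D^{\alpha}\phi\,dx$ for all $\phi\in C^{\infty}_{c}(\mathbb{R}^{n})$. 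I expect this converse to be the delicate step: transferring the identity from the frequency side to the physical side requires Parseval's formula for the pairing of an $L^{2}$ function against a test function and careful bookkeeping of the complex conjugations and of the sign $(-1)^{\lvert\alpha\rvert}$. The polynomial comparison, by contrast, is routine once the expansion of $(1+\lvert\zeta\rvert^{2})^{k}$ is written out.
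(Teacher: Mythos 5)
The paper itself gives no proof of this theorem; it is stated as a known fact with the reader referred to Chapter 5 of Evans [1]. Your proposal is correct and is essentially the standard argument from that reference: Plancherel plus the differentiation law $\widehat{D^{\alpha}f}=(2\pi i\zeta)^{\alpha}\hat{f}$ (correctly normalized for the paper's convention $e^{-2\pi i x\cdot\zeta}$) reduces everything to the pointwise equivalence of $\sum_{\lvert\alpha\rvert\leq k}(2\pi)^{2\lvert\alpha\rvert}\zeta^{2\alpha}$ with $(1+\lvert\zeta\rvert^{k})^{2}$, and your two-step polynomial comparison and the weak-derivative verification in the converse both go through as described.
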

	
	\begin{definition}[Non-integer Sobolev Spaces]
		Assume $0 < s <\infty$ and $f\in L^{2}(\mathbf{R}^{n}).$ Then $ f\in H^{s}(\mathbf{R}^{n}) $ if $ (1+\lvert y \rvert^{s})\hat{f}\in L^{2}(\mathbf{R}^{n}) $. For non-integer $ s $ the norm becomes \[\lVert f \rVert_{H^{s}(\mathbf{R}^{n})} := \lVert(1+\lvert y \rvert^{s})\hat{f}\rVert_{L^{2}(\mathbf{R}^{n})}. \]
	\end{definition}
	
	To finish this section we present a small discussion of negative order Sobolev spaces which will be relevant when discussing dilating solutions of the Vortex Filament Equation.
	
	\begin{definition}
		Denote as $ H^{-1}(\Omega) $ as the dual to $ H^{1}(\Omega) $. The norm of this space will be defined as follows \[ \lVert f \rVert_{H^{-1}(\Omega)} = sup\{\langle f, u \rangle|u\in H^{1}_{0}(\Omega), \lVert u\rVert_{H^{1}_{0}(\Omega)}\leq 1 \}. \]
	\end{definition}
	
	\begin{theorem}[Characterization of the Dual Space]
		Assume $ f \in H^{-1}(U) $. Then there exist functions $ f^{0},f^{1},...,f^{1} $ in $ L^{2}(\Omega) $ such that \[ \langle f,\nu\rangle = \int_{\Omega}f^{0}\nu+\sum_{i =1}^{n}f^{i}\nu_{x_{i}}dx \mbox{ \text{(}$ \nu\in H^{1}_{0}(\Omega) $\text{)}} \]
		
		Furthermore, \[ \lVert f \rvert_{H^{-1}(\Omega)} = \mbox{inf}\{ (\int_{\Omega}\sum_{i = 0}^{n}|f^{i}|^{2}dx)^{1/2}| \mbox{ $ f $ satisfies (1) for $ f^{0},...,f^{n}\in L^{2}(\Omega) $} \} \]
	\end{theorem}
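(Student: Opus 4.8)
The plan is to reduce the entire statement to the Riesz Representation Theorem applied to the Hilbert space $H^{1}_{0}(\Omega)$. First I would equip $H^{1}_{0}(\Omega)$ with the inner product
\[ (u,v)_{H^{1}_{0}(\Omega)} = \int_{\Omega} uv + \sum_{i=1}^{n} u_{x_i} v_{x_i} \, dx, \]
whose induced norm is precisely the $H^{1}_{0}(\Omega)$ norm appearing in the given definition of $\lVert \cdot \rVert_{H^{-1}(\Omega)}$. Since $f \in H^{-1}(\Omega)$ is by definition a bounded linear functional on this Hilbert space, Riesz representation produces a unique $u \in H^{1}_{0}(\Omega)$ with $\langle f, v \rangle = (u,v)_{H^{1}_{0}(\Omega)}$ for every $v \in H^{1}_{0}(\Omega)$.

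Setting $f^{0} := u$ and $f^{i} := u_{x_i}$ for $1 \leq i \leq n$, each of which lies in $L^{2}(\Omega)$ because $u \in H^{1}_{0}(\Omega)$, immediately produces the desired representation
\[ \langle f, \nu \rangle = \int_{\Omega} f^{0} \nu + \sum_{i=1}^{n} f^{i} \nu_{x_i} \, dx \qquad (\nu \in H^{1}_{0}(\Omega)), \]
which settles the existence claim.

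For the norm identity I would establish two inequalities. For the upper bound, given any admissible tuple $g^{0}, \ldots, g^{n} \in L^{2}(\Omega)$ satisfying the same representation, the Cauchy--Schwarz inequality gives
\[ \lvert \langle f, \nu \rangle \rvert \leq \Big( \int_{\Omega} \sum_{i=0}^{n} \lvert g^{i} \rvert^{2} \, dx \Big)^{1/2} \lVert \nu \rVert_{H^{1}_{0}(\Omega)}; \]
taking the supremum over $\lVert \nu \rVert_{H^{1}_{0}(\Omega)} \leq 1$ shows that $\lVert f \rVert_{H^{-1}(\Omega)}$ is dominated by the $L^{2}$ norm of every such tuple, and hence by their infimum. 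For the reverse bound I would test the particular Riesz representation against $\nu = u / \lVert u \rVert_{H^{1}_{0}(\Omega)}$ (the case $u = 0$ being trivial), obtaining $\langle f, \nu \rangle = \lVert u \rVert_{H^{1}_{0}(\Omega)}$ and therefore
\[ \lVert f \rVert_{H^{-1}(\Omega)} \geq \lVert u \rVert_{H^{1}_{0}(\Omega)} = \Big( \int_{\Omega} \sum_{i=0}^{n} \lvert f^{i} \rvert^{2} \, dx \Big)^{1/2}. \]
Because the tuple $(f^{0}, \ldots, f^{n})$ is itself admissible, the infimum is attained and equals $\lVert f \rVert_{H^{-1}(\Omega)}$.

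The individual steps are largely mechanical once the correct inner product is chosen; the step I expect to demand the most care is the reverse inequality, where one must recognize that the infimum defining the $H^{-1}$ norm is actually \emph{attained} --- precisely at the Riesz representative --- so that the two estimates close up to a genuine equality rather than a one-sided bound. A secondary point worth flagging is that completeness of $H^{1}_{0}(\Omega)$ is exactly what licenses the appeal to Riesz representation at the outset.
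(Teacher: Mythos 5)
Your argument is correct, and in fact it is essentially the standard proof of this characterization as given in Evans (the paper's reference [1]); the paper itself states this theorem explicitly without proof, deferring to that source. The one point worth keeping visible in a written version is the final chain of inequalities showing that the Riesz tuple $(u, u_{x_1},\dots,u_{x_n})$ is simultaneously admissible and optimal, so that the infimum is attained and the two one-sided bounds collapse to the claimed equality.
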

	
	For spaces $ H^{-k} $ it coincides with the sense of the previous definition in that they will be the dual of the Sobolev space $ H^{k}_{0} $. For the rest of this paper the reader may assume  that all derivatives are to be taken in the point-wise sense unless otherwise required in the ambient space.
	\subsection{Self Similar Solutions and Geometric PDEs}
	
	In this section we shall discuss the nature of self-similar solutions as well as provide some background on Geometric PDEs, which are the main focus of this paper.
	
	The term self-similar is often used to describe solutions to partial differential equations that demonstrate a particular invariance towards scaling, or in a sense `look the same' at all times. Their importance comes from the fact that they can be used to observe the behavior of a PDE at a singularity by `blowing it up'. In a sense the scaling invariance allows one to create a blow up sequence and rescale time in order to get a clearer picture of the behavior of that PDE at the singularity.
	
	This technique will become clearer in our discussion of Grayson's Theorem when it is used to see the behavior of a point that is `blown up' backwards in time. Here we shall give a basic example of a self-similar solution to the 2-dimensional heat equation, however, for Geometric PDEs, self-similar solutions can be taken to not only be invariant in rescaling but also in translations and rotations. These invariances then give dilating, translating, and rotating self-similar solutions. It is important to note that one can have self-similar solutions that express more than one of these properties.
	
	\begin{theorem}[Self-similar solution of the heat equation]
		Consider the heat equation with the following initial value problem		 \[u_{t}-ku_{xx}= 0\] \[ t>0, -\infty<x<\infty \]
	Then \[ u(x,t) = \frac{1}{\sqrt{4kt}}e^{-x^{2}/(4kt)}\] is a self-similar solution of this equation, i.e. it satisfies $ u_{\alpha}(x,t) = \alpha u(\alpha x, \alpha^{2}t) $ for any $ \alpha>0 $.
	\end{theorem}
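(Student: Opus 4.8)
The plan is to treat the statement as two separate verifications: first that the given $u$ genuinely solves the heat equation, and second that it obeys the stated scaling identity. Both reduce to direct computation with the Gaussian profile, so I would organize the argument to keep the bookkeeping transparent rather than to invoke any general theory.

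For the first part I would compute $u_t$ and $u_{xx}$ by hand. Writing $u = (4kt)^{-1/2}\exp(-x^2/(4kt))$, the time derivative requires the product rule on the prefactor $(4kt)^{-1/2}$ together with the chain rule on the exponent $-x^2/(4kt)$, producing two terms
\[
u_t = (4kt)^{-1/2}e^{-x^2/(4kt)}\left(-\frac{1}{2t}+\frac{x^2}{4kt^2}\right).
\]
The spatial derivatives are cleaner: one differentiation gives $u_x = -\frac{x}{2kt}\,u$, and a second application of the product rule yields
\[
u_{xx} = (4kt)^{-1/2}e^{-x^2/(4kt)}\left(\frac{x^2}{4k^2t^2}-\frac{1}{2kt}\right).
\]
Multiplying the latter by $k$ and comparing termwise with $u_t$ shows the two agree, so $u_t - ku_{xx}=0$.

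For the second part I would simply substitute. Replacing $x$ by $\alpha x$ and $t$ by $\alpha^2 t$ turns the prefactor into $(4k\alpha^2 t)^{-1/2} = \alpha^{-1}(4kt)^{-1/2}$, while the exponent is unchanged because $(\alpha x)^2/(4k\alpha^2 t) = x^2/(4kt)$. Hence the outer factor of $\alpha$ in $\alpha u(\alpha x,\alpha^2 t)$ cancels the $\alpha^{-1}$, giving $\alpha u(\alpha x,\alpha^2 t) = u(x,t)$, which is exactly the asserted scale invariance. It is worth recording the structural reason this works: the parabolic scaling $(x,t)\mapsto(\alpha x,\alpha^2 t)$ maps solutions of the heat equation to solutions, and the weight $\alpha$ is chosen precisely to preserve the normalization of the profile.

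I do not expect any genuine obstacle here; the only place to be careful is the time derivative $u_t$, where the contribution from differentiating the prefactor $(4kt)^{-1/2}$ must be combined correctly with the chain-rule contribution from the exponent. The key observation that makes the comparison immediate is that $-2k(4kt)^{-3/2}$ can be rewritten as $-\frac{1}{2t}(4kt)^{-1/2}$, after which $u_t$ and $ku_{xx}$ are manifestly the same expression.
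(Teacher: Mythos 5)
Your proof is correct, but it runs in the opposite direction from the paper's. You \emph{verify}: you differentiate the explicit Gaussian, check $u_t = k u_{xx}$ termwise, and confirm the scaling identity $\alpha u(\alpha x,\alpha^2 t)=u(x,t)$ by substitution; all of your computations check out. The paper instead \emph{derives} the solution: it imposes the self-similar ansatz from the outset, sets $\alpha = t^{-1/2}$ to reduce to a profile $u(x,t)=t^{-1/2}\phi(x/\sqrt{t})$, substitutes into the heat equation to obtain the ODE $-\tfrac{1}{2}\left[\phi(\zeta)+\zeta\phi'(\zeta)\right]-k\phi''(\zeta)=0$ in the similarity variable $\zeta = x/\sqrt{t}$, and solves it to get $\phi(\zeta)=Ce^{-\zeta^2/(4k)}$, fixing $C$ by normalizing the integral $I(t)$. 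Your route is shorter and, as a proof of the literal statement, cleaner: it needs no auxiliary hypotheses, whereas the paper's derivation invokes decay conditions on $I(t)$ and $u_x$ and implicitly discards one branch of solutions when integrating the ODE. What the paper's route buys is methodological content: it exhibits the parabolic-rescaling technique (reduce a scale-invariant PDE to an ODE in a similarity variable) that the paper explicitly reuses when hunting for dilating solutions of the vortex filament equation, which a pure verification does not illustrate. One last remark that works in your favor: the multiplicative constant in front of the Gaussian is immaterial to both properties you check, since the equation is linear and the scaling identity is preserved under scalar multiples; this also quietly absorbs the discrepancy between the $(4kt)^{-1/2}$ prefactor in the theorem statement and the $(4\pi k t)^{-1/2}$ that the paper's normalization actually produces.
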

	\begin{proof}
		Before beginning we must set the following condition on the solution:
		\begin{align}
			&I(t):=\int_{-\infty}^{\infty}u(x,t)dx<\infty,\\
			&\lim_{x\rightarrow\pm\infty}u_{x}(x,t) = 0
		\end{align}
		
		To find a self-similar solution we first begin by finding a solution $ u_{\alpha}(x,t) $ of the form $ u_{\alpha}(x,t) = \alpha u(\alpha x, \alpha^{2}t) $, such that $ u(x,t) = u_{\alpha}(x,t) $ for all $ \alpha, t>0, x $. The previous condition satisfies the invariance of a self-similar solution, and this method of rescaling variables is called parabolic rescaling.
		
		If we then take $ \alpha = t^{-1/2} $ to get rid of the time variable, we have that \[ u(x,t) = \frac{1}{\sqrt{t}}u(\frac{x}{\sqrt{t}},1) = \frac{1}{\sqrt{t}}\phi(\frac{x}{\sqrt{t}}) \].
		
		Then we get the following equations by taking the derivatives:\begin{align} u_{t} &= -\frac{1}{2}t^{-3/2}[\phi(\frac{x}{\sqrt{t}})+\frac{x}{\sqrt{t}}\phi'(\frac{x}{\sqrt{t}})]\\
		u_{x}&=\frac{1}{t}\phi'(\frac{x}{\sqrt{t}})\\
		u_{xx} &=t^{-3/2}\phi''(\frac{x}{\sqrt{t}})
		 \end{align}
		 
		 We then make the substitution $ \zeta = \frac{x}{\sqrt{t}} $ and plug into our heat equation to obtain the following ODE:\[ -\frac{1}{2}[\phi(\zeta)+\zeta\phi'(\zeta)]-k\phi''(\zeta) = 0 \].
		 
		 Finally, solving this ODE gives \[ \phi(\zeta) = Ce^{-\frac{\zeta^{2}}{4k}} \]
		 so that $ u(x,t) = \frac{1}{\sqrt{4\pi kt}}e^{-\frac{\zeta^{2}}{4k}} $ where $ C = \frac{1}{\sqrt{4k\pi}} $ in order to normalize $ I(t) $.\qed
		 
	\end{proof}
			 The method used in the previous proof of parabolically rescaling the variable is used when attempting to find dilating solutions of the vortex filament equation.
			 
			 In the description of geometric properties of manifolds there are often situations that arise which are modeled with a system of PDEs, this then allows us to use the tools of PDE theory to be able to investigate the geometric, analytic, and topological properties of the objects these equations describe. Geometric PDEs have a wide range of applications, from aiding in the solution of previously open problems such as the Poincar\'e conjecture and the differentiable sphere theorem, to applications in image and sound processing.
			 
			 In this paper we mainly concern ourselves with Geometric Flows. These are systems of equations that describe the deformation of metrics on Riemannian manifolds driven by the geometric quantities such as curvature, volumes, etc. In this paper we present a discussion on two quasi-parabolic geometric flows: Curve Shortening Flow and the Vortex Filament Equation.
	\section{Curve Shortening Flow}
		
		Curve Shortening Flow (CSF) is a geometric quasi-parabolic equation for curves	that serves as an analogous flow to the Vortex Filament Equation. This geometric flow equation has been extensively studied and we have derived many of its more important properties such as monotonicity formulas, and maximum principle estimates that combine with the blowup analysis we mentioned earlier while discussing self-similar solutions to give Huisken's ([3]) proof of Grayson's Theorem.

	\begin{definition}[Curve Shortening Flow]
		A family of embedded curves $ \{\Gamma_t\subset \mathbf{R}^2 \}_{t \in I }$ moves by curve shortening flow if the normal velocity at each point is given by the curvature vector: 
			\begin{equation}
				\partial_t p = \vec{\kappa}(p)
			\end{equation}
		for all $ p \in \Gamma_t $ and all $t \in I $. Here, $I$ is an interval, $\partial_t p $ is the normal velocity at $p$, and $\vec{\kappa}(p)$ is the curvature vector at $p$.
	\end{definition}
	
	We now provide an important example of an explicit solution which also happens to be the only self-similar translating solution of curves evolving under CSF:
	\begin{example}[Grim Reaper Curve]
		Take $ y(x,t) =t-\log \cos(x) $ then under CSF this curve moves upwards without changing its shape. Any curve similar, by either scaling translation or rotation, to the grim reaper is also translated in the direction of the axis of symmetry without changing shape or orientation, satisfying what is to be expected of a self-similar solution. Interestingly it is the only curve with this property ([13]).
	\end{example}
	\subsection{Properties of Curve Shortening Flow}
	We will focus on the evolution of closed embedded curves. For this section we mainly present the theorems of Haslhofer's notes on CSF ([2]) and his discussion on blow-up methods for singularities. The main part of this section will be the presentation by Haslhofer of Huisken's proof of Grayson's Theorem ([6]) from his lecture notes ([2]), for which we shall present all the necessary machinery without proof. Although there are various other proofs of Grayson's Theorem, some more geometric than others and requiring less machinery, the purpose of this one is to demonstrate the utility of self-similar solutions and why we seek to discover them.
	
	CSF can also be rewritten by taking 
	\begin{equation}
	\gamma = \gamma(\cdot , t) : S^1 \times I \rightarrow \mathbb{R}^2
	\end{equation}
	with $\Gamma_t = \gamma (S^1, t)$. Setting $p = \gamma (x,t)$, the equation transforms into 
	\begin{equation}
	\partial_t\gamma(x,t) = \kappa(x,t)N(x,t).
	\end{equation}
	
	\begin{remark}
		The evolution can also be written in the form 
		\begin{equation}
		\partial_t\gamma = \partial_s^2\gamma,
		\end{equation}
		where s denotes arc length if one were to transform from the Frenet frame of reference to the Cartesian plane.
	\end{remark}
	
	First we present some facts of CSF and derive some of its basic properties such as evolution for the length functional.
	
	\begin{theorem}[Evolution of Arclength]
		Define $ L(t) := \int ds $ to be the arclength functional. A curve $ \gamma $ evolving under CSF has decreasing arclength which obeys the following evolution equation:\begin{equation}
			\partial_{t}L = -\int\kappa^{2}ds.
		\end{equation}
	\end{theorem}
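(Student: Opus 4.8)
The plan is to parametrize over the \emph{fixed} domain $S^1$ rather than by arclength, since under the flow the arclength variable itself depends on time. Writing $v(x,t) := \lvert\partial_x \gamma(x,t)\rvert$ for the speed of the parametrization, the arclength element is $ds = v\,dx$ and the functional becomes
\begin{equation}
L(t) = \int_{S^1} v(x,t)\,dx.
\end{equation}
Because the parameter domain $S^1$ does not depend on $t$, we may differentiate under the integral sign, so that $\partial_t L = \int_{S^1} \partial_t v\,dx$, and the entire problem reduces to computing the evolution of the single scalar $v$.

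To find $\partial_t v$ I would begin from $v^2 = \partial_x \gamma \cdot \partial_x \gamma$ and differentiate in $t$, which gives $v\,\partial_t v = \partial_x \gamma \cdot \partial_t\partial_x \gamma$. The crucial structural point is that $\partial_t$ and $\partial_x$ commute, since both are ordinary partial derivatives on the fixed product domain $S^1 \times I$; this lets me exchange them and insert the flow equation $\partial_t \gamma = \kappa N$ to obtain $v\,\partial_t v = \partial_x \gamma \cdot \partial_x(\kappa N)$. Writing $\partial_x = v\,\partial_s$ together with $\partial_x \gamma = vT$, and invoking the planar Frenet relations $\partial_s T = \kappa N$ and $\partial_s N = -\kappa T$, the right-hand side becomes $vT \cdot v\bigl((\partial_s\kappa)N - \kappa^2 T\bigr)$. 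Because $T\cdot N = 0$ and $T\cdot T = 1$, only the $-\kappa^2 T$ contribution survives, yielding the central identity $\partial_t v = -\kappa^2 v$, or equivalently $\partial_t(ds) = -\kappa^2\,ds$.

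With this in hand the conclusion is immediate:
\begin{equation}
\partial_t L = \int_{S^1} \partial_t v\,dx = -\int_{S^1} \kappa^2 v\,dx = -\int \kappa^2\,ds.
\end{equation}
As $\kappa^2 \geq 0$, this simultaneously shows $\partial_t L \leq 0$, so the arclength is non-increasing under the flow.

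The main conceptual obstacle, and the reason one cannot naively differentiate $\int ds$ term by term, is that the arclength variable $s$ is itself time-dependent, so $\partial_t$ and $\partial_s$ fail to commute. Passing to the fixed parameter $x$ sidesteps this difficulty, and the whole argument then rests on correctly tracking how the metric factor $v$ evolves; once $\partial_t v = -\kappa^2 v$ is established the remainder is bookkeeping. As a consistency check I would observe that this computation is compatible with the earlier remark that $\partial_t\gamma = \partial_s^2\gamma$, since $\partial_s^2\gamma = \partial_s T = \kappa N$.
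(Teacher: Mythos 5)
Your proposal is correct and follows essentially the same route as the paper: both pass to the fixed parameter $x$, differentiate $\lvert\partial_x\gamma\rvert$ under the integral, commute $\partial_t$ with $\partial_x$, insert $\partial_t\gamma=\kappa N$, and kill the normal component via the Frenet relations and orthogonality. Your phrasing via the pointwise identity $\partial_t v=-\kappa^2 v$ is just a slightly more explicit packaging of the same computation.
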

	
	\begin{proof}
		From calculus we have that $ L(t) = \int ds = \int_{S^{1}}\langle\gamma_{x},\gamma_{x} \rangle^{1/2}dx $ where $ x\in S^{1} $.
		
		We take the derivative with respect to time and proceed to differentiate under the integral to obtain\begin{align} \partial_{t}L &= \int_{S^{1}}\langle\partial_{xt}\gamma,T\rangle du\\
		&=\int_{S^{1}}\langle\partial_{tx}\gamma,T\rangle du\\
		&=\int_{S^{1}}\langle \partial_{x}(\kappa N),T\rangle dx
		\end{align}
		were $ T $  and $ N $ are the tangent and normal vectors, respectively. From the Frenet equations we have that $ \partial_{x}T = s'(x)\kappa N $ and $ \partial_{x}N=-s'(x) \kappa T $. So substituting this gives that the only component not zero by orthogonality is $ -\kappa^{2} $, turning (13) into \[ \partial_{t}L =-\int_{S^{1}}\kappa^{2}s'(x)dx = -\int\kappa^{2}ds \] as desired.
	\end{proof}
	
	\begin{remark}
		This could also be easily derived from the first variation formula of arc-length, which says that if a curve moves with normal velocity $ v $ the length of the curve changes by $ -\int \kappa v $. This theorem also shows why it is considered that curves under CSF decrease their length most efficiently.
	\end{remark}
	\begin{proposition}[Evolution of Curvature]
		Suppose that $ \gamma $ is a curve that satisfies CSF, then its curvature satisfies the evolution equation \begin{equation}
			\kappa_{t} = \kappa_{ss}+\kappa^{3}.
		\end{equation}
	\end{proposition}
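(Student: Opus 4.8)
The plan is to exploit the fact that the arc-length parameter $s$ itself depends on time, so that the operators $\partial_t$ and $\partial_s$ fail to commute; the defect in their commutator is precisely what generates the cubic term. First I would work with a fixed parameter $x$ and write $v := |\gamma_x| = \langle\gamma_x,\gamma_x\rangle^{1/2}$ for the speed, so that $\partial_s = v^{-1}\partial_x$ and $ds = v\,dx$. Differentiating the relation $\partial_s = v^{-1}\partial_x$ in time gives
\begin{equation}
[\partial_t, \partial_s] = -\frac{v_t}{v}\,\partial_s,
\end{equation}
so the first genuine task is to determine the quantity $v_t/v$.

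For that I would expand $v_t = \langle\gamma_{xt},\gamma_x\rangle/v = \langle\partial_x(\kappa N),\gamma_x\rangle/v$, using the flow equation $\partial_t\gamma = \kappa N$ together with the Frenet relations $\partial_x T = v\kappa N$ and $\partial_x N = -v\kappa T$ from the previous proof. Since $\partial_x(\kappa N) = \kappa_x N - \kappa^2 v\,T$ and $\gamma_x = vT$, orthogonality of $T$ and $N$ leaves only $\langle\partial_x(\kappa N),\gamma_x\rangle = -\kappa^2 v^2$, yielding $v_t = -\kappa^2 v$. Hence $v_t/v = -\kappa^2$, and the commutator becomes
\begin{equation}
[\partial_t, \partial_s] = \kappa^2\,\partial_s.
\end{equation}

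With the commutator in hand I would compute the evolution of the Frenet frame. Writing $T = \partial_s\gamma$ gives $\partial_t T = \partial_s(\kappa N) + \kappa^2\partial_s\gamma = \kappa_s N - \kappa^2 T + \kappa^2 T = \kappa_s N$, and orthonormality of $\{T,N\}$ then forces $\partial_t N = -\kappa_s T$. The final step is to differentiate the Frenet identity $\partial_s T = \kappa N$ with respect to time in two ways: directly one gets $\partial_t(\kappa N) = \kappa_t N - \kappa\kappa_s T$, while routing through the commutator gives $\partial_s(\partial_t T) + \kappa^2\partial_s T = \kappa_{ss}N - \kappa\kappa_s T + \kappa^3 N$. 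Matching the $N$-components yields $\kappa_t = \kappa_{ss} + \kappa^3$, and the $T$-components agree automatically, providing a consistency check.

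The main obstacle is the commutator computation, that is, establishing $[\partial_t,\partial_s] = \kappa^2\partial_s$ with the correct sign. Everything downstream is bookkeeping within the orthonormal frame, but the factor $v_t/v = -\kappa^2$ is where the geometry of the flow enters, and pinning down its sign and magnitude is exactly what produces the $+\kappa^3$ term rather than a spurious one.
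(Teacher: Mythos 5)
Your argument is correct and complete, but it takes a genuinely different route from the one in the paper. The paper fixes a gauge at a single point $(x,t)$ (unit speed, $\langle \partial_x^2\gamma, T\rangle=0$ there), writes $\kappa = |\partial_x\gamma|^{-2}\langle\partial_x^2\gamma, N\rangle$, and differentiates this expression directly in $t$, tracking the correction terms coming from $\partial_t N$ and from the time dependence of $|\partial_x\gamma|$. You instead isolate the single geometric input $v_t = -\kappa^2 v$ (which, note, is exactly the pointwise content of the paper's preceding computation $\partial_t L = -\int\kappa^2\,ds$, since $\partial_t(ds) = -\kappa^2\,ds$), package it as the commutator identity $[\partial_t,\partial_s]=\kappa^2\partial_s$, and then let the Frenet structure do the rest: you get $\partial_t T=\kappa_s N$ and $\partial_t N=-\kappa_s T$ for free, and the curvature equation drops out of differentiating $\partial_s T=\kappa N$ in time two ways, with the $T$-component serving as a built-in consistency check. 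I verified your signs: $\langle\partial_x(\kappa N),\gamma_x\rangle=-\kappa^2v^2$ gives $v_t/v=-\kappa^2$, hence $[\partial_t,\partial_s]=+\kappa^2\partial_s$ and the $+\kappa^3$ term. Your approach is more systematic and less gauge-dependent than the paper's pointwise computation, produces the frame evolution equations as useful byproducts (the paper's later VFE section does the analogous frame computation separately), and is the form of the argument that generalizes to higher codimension and to mean curvature flow; the paper's version is shorter if one only wants $\kappa_t$ and is content to work at a single well-chosen point. It is also worth noting the contrast with the paper's Lemma on the vortex filament equation, where $[\partial_t,\partial_s]=0$ precisely because binormal flow preserves arclength — your observation that the nonvanishing of this commutator is what generates the cubic term is the right way to see where CSF and VFE diverge.
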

	\begin{proof}
		We work here with a parametrization such that $ |\partial_{x}\gamma| =1 $ and $ \langle\partial^{2}_{x}\gamma,N\rangle =0$ at the point $(x,t) $. By definition $ \kappa = |\partial_{x}\gamma|^{-2}\langle\partial^{2}_{x}\gamma,N\rangle $ compute \begin{equation}
			\kappa_{t} = \partial_{t}\langle\partial_{xx}\gamma,N\rangle-2\langle T,\partial_{xt}\gamma,N\rangle\\
			=\langle\partial_{tx}\gamma,N\rangle-2\kappa\langle T,\partial_{tx}\gamma\rangle
		\end{equation}
		since $ N_{t} $ moves in the tangent direction and $ \partial_{xx}\gamma =\kappa N$. Using the orthogonality relations between tangent and normal vectors and plugging in (8) we obtain\begin{align}
			\partial_{t}\kappa &= \partial_{xx}\kappa+\kappa\langle\partial_{xx}N,N\rangle-2\kappa^{2}\langle T,\partial_{x}N\rangle\\
			&= \partial_{xx}\kappa-\kappa\langle\partial_{x}N,\partial_{x}N\rangle +2\kappa^{3}\\
			&= \kappa_{ss}+\kappa^{3}
		\end{align}
		where we used that $ \partial_{x}N = -\kappa T $.
	\end{proof}
	
	We now present the following corollary without proof:
	\begin{corollary}[Conservation of Convexity]
		Convexity is preserved under curve shortening flow, i.e. if $ \kappa>0 $ at $ t = 0 $ then $ \kappa>0 $ for all $ t\in[0,\infty) $.
	\end{corollary}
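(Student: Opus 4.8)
The plan is to treat the curvature evolution equation $\kappa_t = \kappa_{ss} + \kappa^3$, established in the previous proposition, as a semilinear reaction--diffusion equation and to apply the parabolic maximum principle. The crucial structural observation is that the reaction term $\kappa^3$ vanishes precisely when $\kappa = 0$, so the constant function $\kappa \equiv 0$ is itself a solution; intuitively, a strictly positive curvature cannot cross this stationary barrier. The diffusion term $\kappa_{ss}$ can only push the minimum of $\kappa$ upward, so positivity should be forced to persist.

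First I would exploit compactness: since the flow is defined on closed embedded curves $\gamma(\cdot,t): S^1 \to \mathbb{R}^2$, for each fixed $t$ the function $x \mapsto \kappa(x,t)$ attains its minimum on $S^1$. I would set $m(t) := \min_{x \in S^1} \kappa(x,t)$, which is continuous in $t$, with $m(0) > 0$ by hypothesis. Next I would apply Hamilton's trick to differentiate $m$: at any point $x_t$ where the spatial minimum is realized one has $\kappa_s(x_t, t) = 0$ and $\kappa_{ss}(x_t, t) \geq 0$, so that in the sense of lower forward difference quotients,
\[
\frac{dm}{dt} \;\geq\; \kappa_{ss}(x_t, t) + \kappa(x_t, t)^3 \;\geq\; m(t)^3.
\]

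With this differential inequality in hand, the conclusion follows by an elementary ODE comparison. Suppose, for contradiction, that $m$ fails to remain positive, and set $t_0 = \inf\{t > 0 : m(t) \leq 0\}$. By continuity $m(t_0) = 0$, while on $[0, t_0)$ we have $m > 0$ and hence $\tfrac{dm}{dt} \geq m^3 > 0$, so $m$ is strictly increasing there; this forces $m(t_0) \geq m(0) > 0$, a contradiction. Therefore $m(t) > 0$, i.e. $\kappa > 0$, throughout the interval of existence of the flow. (Strictly speaking the statement should read ``for all $t$ in the maximal interval of existence,'' since the flow may become singular in finite time, as the shrinking circle shows.)

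I expect the main obstacle to be the rigorous justification of Hamilton's trick rather than the comparison step. The function $m(t)$ is only Lipschitz, not differentiable, and the operator $\partial_{ss}$ is taken with respect to the \emph{time-dependent} arc-length metric, so one must verify carefully that one may differentiate under the minimum and that $\kappa_{ss}(x_t,t) \geq 0$ is the correct sign at a minimizing parameter (this uses that $s$ is an increasing reparametrization of $x$, so vanishing first derivative transfers the sign of the second derivative). A cleaner but less elementary alternative would be to invoke the strong parabolic maximum principle on the compact manifold $S^1$ directly, as in Haslhofer's notes: first the weak maximum principle preserves $\kappa \geq 0$, and then the strong version rules out an interior zero, since $\kappa(x_0,t_0) = 0$ for some $t_0 > 0$ would force $\kappa \equiv 0$ on $[0,t_0]$, contradicting $\kappa(\cdot,0) > 0$.
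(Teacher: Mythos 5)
Your proof is correct. Note that the paper itself states this corollary explicitly \emph{without} proof, so there is nothing to compare against; your maximum-principle argument (Hamilton's trick applied to $m(t)=\min_{S^1}\kappa(\cdot,t)$, the sign check $\kappa_s=0$, $\kappa_{ss}\ge 0$ at a spatial minimum, and the ODE comparison with $\dot m\ge m^3$) is the standard way to fill that gap, and you correctly handle the two genuine subtleties: that $\partial_{ss}$ is taken in the time-dependent arc-length parameter (harmless at a critical point of $\kappa$, since the extra term carries a factor of $\kappa_x$), and that the conclusion should be stated on the maximal existence interval $[0,T)$ rather than $[0,\infty)$, since by Grayson's Theorem the flow of a closed curve becomes singular at $T=A_\Gamma/2\pi$. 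The alternative you sketch via the weak and strong parabolic maximum principles on $S^1$ is equally valid and is the route taken in Haslhofer's notes, which the paper cites as its source for this section.
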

	
	We now present an important formula without proof derived by Huisken ([4]) that is useful when trying to look at blow-up analysis on singularities of CSF as described in Section 1.2 since this equation is invariant under parabolic scaling.
	
	\begin{definition}[Heat Kernel]
		Let $ X_{0} = (x_{0},t_{0}) $ and denote by $ \rho_{X_{0}}(x,t) $ the backwards Heat Kernel. We then define it as\begin{equation}
			\rho_{X_{0}}(x,t) = (4\pi(t_{0}-t))^{-1/2}e^{-\frac{|x-x_{0}|^{2}}{4(t_{0}-t)}}
		\end{equation}
		for $ t<t_{0} $.
	\end{definition}
	
	\begin{theorem}[Huisken's Monotonicity Formula]
		Let $ \{\Gamma_{t}\} $ be a family of curves that move by CSF, then 
		\begin{equation}
			\frac{d}{dt}\int_{\Gamma_{t}} \rho_{X_{0}}ds =-\int_{\Gamma_{t}}\Big|\kappa+\frac{\langle\gamma,N\rangle}{2(t_{0}-t)} \Big|^{2}\rho_{X_{0}}ds\qquad(t<t_{0`}) 
		\end{equation}
	\end{theorem}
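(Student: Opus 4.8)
The plan is to differentiate the weighted length $\int_{\Gamma_t}\rho_{X_0}\,ds$ directly, tracking the two distinct sources of time dependence: the evolution of the arclength element $ds$ under the flow, and the change in the kernel $\rho_{X_0}$ itself. For the measure, the pointwise form of the Evolution of Arclength theorem gives $\partial_t(ds) = -\kappa^2\,ds$ (this is exactly the integrand computed there, before integrating). For the kernel, since $\rho_{X_0}$ is a fixed function of space and time but is being evaluated along the moving curve $\gamma(x,t)$, the chain rule together with the flow equation $\partial_t\gamma = \kappa N$ gives a total derivative $\tfrac{d}{dt}\rho_{X_0} = \partial_t\rho_{X_0} + \kappa\langle\nabla\rho_{X_0}, N\rangle$, where $\partial_t\rho_{X_0}$ is the explicit partial derivative in $t$. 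Combining these, I would first write
\begin{equation}
\frac{d}{dt}\int_{\Gamma_t}\rho_{X_0}\,ds = \int_{\Gamma_t}\Big[\partial_t\rho_{X_0} + \kappa\langle\nabla\rho_{X_0}, N\rangle - \kappa^2\rho_{X_0}\Big]\,ds.
\end{equation}

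Next I would compute the two derivatives of the Gaussian explicitly. Writing $\tau = t_0 - t$ and $y = \gamma - x_0$, a direct calculation gives $\nabla\rho_{X_0} = -\tfrac{y}{2\tau}\rho_{X_0}$ and $\partial_t\rho_{X_0} = \big(\tfrac{1}{2\tau} - \tfrac{|y|^2}{4\tau^2}\big)\rho_{X_0}$. Decomposing the position vector in the Frenet frame as $|y|^2 = \langle y, T\rangle^2 + \langle y, N\rangle^2$ and substituting, the integrand becomes a combination of $\kappa^2$, a cross term $\tfrac{\kappa\langle y,N\rangle}{\tau}$, and $\tfrac{\langle y,N\rangle^2}{4\tau^2}$, plus leftover pieces involving $\tfrac{1}{2\tau}$ and $\tfrac{\langle y,T\rangle^2}{4\tau^2}$. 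At this stage the integrand is a \emph{near} miss for $-\big|\kappa + \tfrac{\langle y,N\rangle}{2\tau}\big|^2\rho_{X_0}$: the normal and quadratic pieces are correct, but the cross term carries only half the required coefficient and there remain spurious tangential and constant pieces.

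The decisive step is to correct this by adding $\rho_{ss} := \partial_s^2\rho_{X_0}$, which integrates to zero over the closed curve $\Gamma_t$ and hence does not alter the integral. Using $\partial_s y = T$ together with the Frenet relations $\partial_s T = \kappa N$ and $\partial_s N = -\kappa T$ recorded above, I would compute $\partial_s\rho_{X_0} = -\tfrac{\langle y,T\rangle}{2\tau}\rho_{X_0}$ and then
\begin{equation}
\rho_{ss} = \Big(-\frac{1}{2\tau} - \frac{\kappa\langle y,N\rangle}{2\tau} + \frac{\langle y,T\rangle^2}{4\tau^2}\Big)\rho_{X_0}.
\end{equation}
Adding this to the integrand cancels the constant and tangential leftovers exactly, supplies the missing half of the cross term, and thereby completes the square to $-\big|\kappa + \tfrac{\langle y,N\rangle}{2\tau}\big|^2\rho_{X_0}$. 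Since $\int_{\Gamma_t}\rho_{ss}\,ds = 0$, this yields the stated formula (here $y = \gamma - x_0$, so the $\langle\gamma,N\rangle$ in the statement is understood relative to the base point $x_0$).

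I expect the main obstacle to be bookkeeping rather than any deep difficulty: one must correctly interchange the $t$- and $x$-derivatives, keep the explicit partial derivative of $\rho_{X_0}$ strictly separate from its derivative along the flow, and recognize that the precise combination collapsing the stray terms is the intrinsic second derivative $\rho_{ss}$. The conceptual heart is that the backwards heat kernel is engineered so that its explicit time derivative almost cancels against its Laplacian; the discrepancy, once the ambient Hessian is traded for the intrinsic $\rho_{ss}$ via the Frenet equations, is exactly the completed square. Identifying $\rho_{ss}$ as the correct total-derivative term to add, and verifying the cancellations it produces, is the one genuinely non-mechanical move in the argument.
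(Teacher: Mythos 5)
Your computation is correct, and every intermediate identity checks out: $\nabla\rho_{X_0}=-\tfrac{y}{2\tau}\rho_{X_0}$, $\partial_t\rho_{X_0}=\bigl(\tfrac{1}{2\tau}-\tfrac{|y|^2}{4\tau^2}\bigr)\rho_{X_0}$, $\rho_{ss}=\bigl(-\tfrac{1}{2\tau}-\tfrac{\kappa\langle y,N\rangle}{2\tau}+\tfrac{\langle y,T\rangle^2}{4\tau^2}\bigr)\rho_{X_0}$, and adding $\rho_{ss}$ to the raw integrand does collapse everything to $-\bigl|\kappa+\tfrac{\langle y,N\rangle}{2\tau}\bigr|^2\rho_{X_0}$, with $\int_{\Gamma_t}\rho_{ss}\,ds=0$ on a closed curve. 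Note, however, that the paper deliberately states this theorem \emph{without} proof, citing Huisken's original work, so there is no internal argument to compare against; what you have written is the standard direct verification (Huisken's own proof specialized from hypersurfaces to plane curves), where the role your $\rho_{ss}$ plays is exactly that of the intrinsic Laplacian term in the general identity $\partial_t\rho+\Delta_{\Gamma_t}\rho$ being a perfect square up to a divergence. Two small points worth making explicit if this were to be included: the identity $\partial_t(ds)=-\kappa^2\,ds$ is the pointwise statement $\partial_t|\gamma_x|=-\kappa^2|\gamma_x|$, which you should justify by the same Frenet computation the paper uses for the arclength evolution rather than by citing the integrated theorem; and you are right that the paper's $\langle\gamma,N\rangle$ must be read as $\langle\gamma-x_0,N\rangle$ — the statement as printed is only literally correct when $x_0=0$.
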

	
	Although the goal of the following theorem is to prove existence and uniqueness for solutions of CSF its importance is in the maximal existence time curvature.
	
	\begin{theorem}[Existence and Uniqueness]
		Let $ \gamma_{0}:S^{1}\rightarrow\mathbb{R}^{2} $ be an embedded curve. Then there exists a unique smooth solution $ \gamma:S^{1}\times[0,T)\rightarrow\mathbb{R}^{2} $ of curve shortening flow defined on a maximal interval $ [0,T) $. The maximal existence time is characterized by \begin{equation}
			\sup_{S^{1}\times[0,T)}|\kappa| = \infty
		\end{equation}
	\end{theorem}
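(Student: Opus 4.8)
The plan is to separate the statement into two independent tasks: first, local-in-time existence and uniqueness of a smooth solution, and second, the characterization of the maximal existence time through a continuation criterion driven by the curvature. For the first task, the natural move is to recognize, as in the remark stating $\partial_t\gamma = \partial_s^2\gamma$, that CSF is a quasilinear parabolic system. Written over a fixed parameter $x\in S^1$ it reads $\partial_t\gamma = |\partial_x\gamma|^{-2}\partial_x^2\gamma + (\text{lower order})$, whose principal part has the strictly positive coefficient $|\partial_x\gamma|^{-2}$ wherever the parametrization is regular. The only obstruction to strict parabolicity is the tangential reparametrization freedom, since the flow only sees the normal velocity $\kappa N$; I would therefore either add a tangential term (a DeTurck-type modification) to make the linearized operator uniformly elliptic, or, more elementarily, write the curve locally as a graph $y=u(x,t)$ over a tangent line, reducing CSF to the scalar quasilinear equation $u_t = u_{xx}/(1+u_x^2)$.

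With the equation in strictly parabolic form, short-time existence and uniqueness follow from the standard theory for quasilinear parabolic equations. I would linearize about the initial data, invoke Schauder estimates in the parabolic H\"older spaces $C^{2+\alpha,\,1+\alpha/2}$, and close the argument with the implicit function theorem (or a contraction mapping) to produce a unique smooth solution on some interval $[0,\epsilon)$. Uniqueness of the geometric flow then reduces to uniqueness for the scalar graph equation, which is a consequence of the parabolic maximum principle; one must only verify that two solutions starting from the same curve agree up to reparametrization and that the resulting normal motion is the same.

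For the characterization of the maximal interval, let $[0,T)$ be the maximal interval on which the smooth solution exists. The content of the claim is the continuation criterion: if $T<\infty$ then $\sup_{S^1\times[0,T)}|\kappa|=\infty$, which I would prove by contraposition. Assume $|\kappa|\le C$ on $[0,T)$ with $T<\infty$. Starting from the curvature evolution equation $\kappa_t=\kappa_{ss}+\kappa^3$ established in the Evolution of Curvature proposition, together with the analogous equations for the higher derivatives $\partial_s^k\kappa$, I would derive uniform bounds on all $|\partial_s^k\kappa|$ over $[0,T)$ by applying the maximum principle to quantities such as $|\partial_s\kappa|^2$ (Bernstein-type estimates iterated in $k$). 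Combined with a bound on $|\partial_x\gamma|$ from its own evolution, which keeps the parametrization regular, these yield uniform $C^\infty$ control, so that $\gamma(\cdot,t)$ converges as $t\to T$ to a smooth regular curve $\gamma(\cdot,T)$. Applying the short-time existence result to this limiting curve extends the solution beyond $T$, contradicting maximality; hence the curvature must be unbounded at any finite maximal time.

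The main obstacle is the derivative estimate in the continuation step: extracting bounds on every $\partial_s^k\kappa$ from a bound on $\kappa$ alone. The evolution equations for the higher derivatives grow progressively more complicated, and the commutator $[\partial_t,\partial_s]$ generates additional curvature terms that must be absorbed using the assumed bound on $\kappa$ together with interpolation inequalities. A secondary technical point is preventing the parametrization from degenerating, which requires verifying that $|\partial_x\gamma|$ stays bounded away from $0$ and $\infty$, and ensuring that embeddedness is preserved via the avoidance principle for the distance between distinct points of the curve, so that the limiting object at time $T$ is again an embedded curve to which short-time existence applies.
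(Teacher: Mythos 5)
The paper states this theorem without proof: it is one of the pieces of ``necessary machinery'' that Section 2.1 imports from Haslhofer's lecture notes ([2]) in order to run Huisken's proof of Grayson's theorem, so there is no argument in the text to compare yours against. Judged on its own terms, your outline is the standard proof from the literature and is sound. The reduction to a strictly parabolic scalar equation (local graph representation $u_t = u_{xx}/(1+u_x^2)$, or equivalently a DeTurck-type fixing of the tangential gauge) correctly handles the degeneracy coming from reparametrization invariance, and short-time existence and geometric uniqueness then follow from quasilinear parabolic theory as you describe. The continuation criterion by contraposition is also the right mechanism: with $|\kappa|\le C$ on $[0,T)$, the identity $\partial_t|\partial_x\gamma| = -\kappa^2|\partial_x\gamma|$ keeps the parametrization uniformly regular, the evolution equations for $\partial_s^k\kappa$ (each of the form heat operator plus lower-order terms in $\kappa$ and its lower derivatives) close under iterated maximum-principle or Bernstein-type estimates, and since $|\partial_t\gamma|=|\kappa|\le C$ the curves converge to a smooth limit curve at time $T$, from which the flow restarts, contradicting maximality. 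You have correctly flagged the two genuine technical burdens (the higher-derivative estimates and the non-degeneration of the parametrization); neither is a gap in the logic, only in the level of detail, and both are carried out in the references the paper already cites. One minor remark: embeddedness at time $T$ is not actually needed to restart the flow, since short-time existence holds for immersed closed curves, though it is preserved anyway by the avoidance/distance-comparison principle as you note.
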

	
	\begin{theorem}[Local Regularity Theorem $ \text{([7],[8])} $]
		Let $ X = (x,t) $ be a point in space-time and $ P_{r}(X)=B_{r}(x)\times(t-r^{2},t] $ for the parabolic ball with center $ X $ and radius $ r $. There exist universal constants $ \epsilon >0 $ and $ C<\infty $ with the following property. If $\{\Gamma_{t}\subset\mathbb{R}^{2}\}_{t\in(t_{0}-2r^{2},t]} $ is a curve shortening flow with \begin{equation}
			\sup_{\bar{X_{0}}\in P_{r}(X_{0})}\Theta\big(\{\Gamma _{t}\}, \bar{X_{0}},r \big) :=\int_{\Gamma_{t_{0}}-r^{2}}\rho_{(x_{0},t_{0})}ds <1+\epsilon,
		\end{equation}
		then \begin{equation}
			\sup_{P_{r/2}(X_{0})}|\kappa|\leq\frac{C}{r}
		\end{equation}
	\end{theorem}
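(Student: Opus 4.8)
The plan is to prove this $\epsilon$-regularity statement by a blow-up argument by contradiction, with Huisken's Monotonicity Formula as the essential tool and the parabolic scaling invariance of curve shortening flow doing the bookkeeping. First I would reduce to the normalized case $X_0 = (0,0)$ and $r = 1$. Both the hypothesis and the conclusion behave well under the parabolic rescaling $\gamma \mapsto \lambda^{-1}\gamma(\lambda\,\cdot\,,\lambda^2\,\cdot\,)$: the Gaussian density $\Theta$ is scale-invariant, while the curvature transforms as $\kappa \mapsto \lambda\kappa$, so that the estimate $\sup|\kappa| \leq C/r$ is exactly scale-covariant. Hence it suffices to produce universal constants $\epsilon,C$ in the case $r = 1$.

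Next I would argue by contradiction. Supposing no such constants exist, there is a sequence $\epsilon_n \to 0$ and flows $\{\Gamma^n_t\}$ on $(-2,0]$ with $\sup_{P_1(0)}\Theta(\{\Gamma^n_t\},\,\cdot\,,1) < 1 + \epsilon_n$ but $\sup_{P_{1/2}(0)}|\kappa_n| \to \infty$. The heart of the argument is a point-selection (point-picking) procedure: one maximizes a weighted curvature quantity, schematically the product of $|\kappa_n(X)|$ with the parabolic distance from $X$ to the boundary of $P_1$, selects a point $X_n$ where this quantity is largest, and then parabolically rescales each flow so that the curvature at $X_n$ becomes exactly $1$. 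Because $X_n$ maximizes the weighted quantity, after rescaling the curvature of the $n$-th flow stays uniformly bounded on a parabolic ball whose radius grows to infinity.

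With a uniform curvature bound on larger and larger regions, I would invoke parabolic regularity for the curvature evolution equation $\kappa_t = \kappa_{ss} + \kappa^3$ to bootstrap bounds on all higher derivatives, and then extract a subsequence converging smoothly on compact sets to a limit curve shortening flow $\{\Gamma^\infty_t\}$ defined on all of space-time (an ancient, indeed eternal, flow) with $|\kappa_\infty| = 1$ at the origin. The crucial consequence of the density hypothesis is that the Gaussian density of this limit flow equals $1$ at every point: scale-invariance keeps each $\Theta_n < 1 + \epsilon_n \to 1$, while densities are always at least $1$, so the limit density is pinned to $1$.

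Finally I would appeal to the rigidity (equality) case of Huisken's Monotonicity Formula: a flow whose Gaussian density is constantly $1$ forces the right-hand side $-\int |\kappa + \frac{\langle\gamma,N\rangle}{2(t_0-t)}|^2 \rho_{X_0}\,ds$ to vanish identically, so the flow must be a self-shrinking soliton, and density exactly $1$ — the density of a multiplicity-one line — then forces that shrinker to be the flat line, whence $\kappa_\infty \equiv 0$. This contradicts $|\kappa_\infty(0,0)| = 1$ and closes the argument. I expect the main obstacle to be precisely the point-selection together with the compactness: making the choice of rescaling centers rigorous so that the curvature genuinely remains bounded on expanding parabolic balls, and upgrading the convergence of the rescaled flows from measure-theoretic (Brakke/varifold) convergence to smooth convergence, since it is smooth convergence that transfers the pointwise normalization $|\kappa| = 1$ to the limit and thereby contradicts the rigidity.
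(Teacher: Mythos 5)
The paper itself does not prove this theorem: it is stated as part of the ``necessary machinery without proof'' and attributed to White [7] and Brakke [8], so there is no in-paper argument to compare against. Your sketch is the standard route to such $\epsilon$-regularity statements (essentially White's argument as streamlined in Haslhofer's notes [2]): parabolic rescaling to normalize $r=1$, contradiction, point selection, smooth subconvergence to an eternal limit flow, and rigidity in Huisken's monotonicity formula. The strategy is sound, and you correctly identify point selection and compactness as the technical core; with the curvature bounds that point selection provides on expanding parabolic balls, smooth subconvergence follows from interior parabolic estimates for $\kappa_t=\kappa_{ss}+\kappa^3$ together with local area bounds, so the Brakke/varifold machinery you worry about at the end can be bypassed entirely. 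Two steps deserve to be made explicit rather than asserted. First, the hypothesis controls the density ratio only at the single scale $r$ and for centers in $P_r(X_0)$; to conclude that the limit flow has $\Theta\leq 1$ at \emph{every} center and \emph{every} scale you must invoke the monotonicity formula itself, which makes $\Theta(X,s)$ non-decreasing in $s$ and hence bounds all smaller scales by the scale-$r$ hypothesis --- without this the ``density pinned to $1$'' claim does not follow from scale invariance alone. Second, the rigidity step ``density $1$ forces a line'' should not be left to an implicit classification of shrinkers: the clean argument is that $\Theta\equiv 1$ forces equality in the monotonicity formula for every choice of center $(x_0,t_0)$, i.e. $\kappa+\frac{\langle\gamma-x_0,N\rangle}{2(t_0-t)}=0$ for all admissible $x_0$; subtracting the identities for two distinct centers gives $\langle x_0-x_0',N\rangle=0$, so $N$ is constant and $\kappa\equiv 0$, contradicting $|\kappa_\infty|(0,0)=1$. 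With those two points filled in, your outline is a correct and complete plan for the theorem the paper leaves unproved.
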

	
	The importance of Theorem 2.1.7 shall be when we create blowup sequences that are close to a circle  as we approach the singularity, and won't allow for it to be a straight line. This is a fact we shall use to show that the curve must become a circle in the proof of Grayson's Theorem.
	
	\begin{theorem}[Hamilton's Harnack Inequality$ \text{([9])} $]
		If $ \{ \Gamma_{t}\subset\mathbf{R}^{2} \}_{t\in[0,T)} $ is a convex solution of CSF then $\frac{\kappa_{t}}{\kappa}-\frac{\kappa_{s}^{2}}{\kappa^{2}}+\frac{1}{2t}\geq0 $
	\end{theorem}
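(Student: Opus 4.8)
The plan is to follow Hamilton's strategy: form a single scalar ``Harnack quantity'' out of the left-hand side, show that it satisfies a reaction--diffusion equation along the flow whose reaction term has a favorable sign, and then conclude by the maximum principle together with the correct behavior as $t\to 0^+$. Since the curve is convex, Corollary 2.1.3 (conservation of convexity) guarantees $\kappa>0$ for all time, so $\log\kappa$ is well defined and I may freely divide by $\kappa$. I would set
\[
Z := \frac{\kappa_t}{\kappa} - \frac{\kappa_s^2}{\kappa^2} + \frac{1}{2t} = \partial_t\log\kappa - (\partial_s\log\kappa)^2 + \frac{1}{2t},
\]
so that the claim is exactly $Z\geq 0$. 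Writing $L=\log\kappa$ and using the curvature evolution $\kappa_t=\kappa_{ss}+\kappa^3$ of Proposition 2.1.2, a short computation gives $L_t=L_{ss}+L_s^2+\kappa^2$, and hence the more convenient form $Z=\partial_s^2\log\kappa+\kappa^2+\tfrac{1}{2t}$.

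The heart of the argument is the evolution of $Z$ under the flow. The essential subtlety is that the arclength parameter itself moves with time: from the arclength evolution of Theorem 2.1.1 one has $\partial_t\,ds=-\kappa^2\,ds$, which yields the commutation relation $[\partial_t,\partial_s]=\kappa^2\,\partial_s$. Carrying $\partial_t$ past the two $\partial_s$'s in $\partial_s^2\log\kappa$ produces several curvature terms, and after substituting $\kappa_t=\kappa_{ss}+\kappa^3$ and simplifying I expect the reactive part to collapse into a perfect square: concretely, it should equal $2\big(\partial_s^2\log\kappa+\kappa^2\big)^2-\tfrac{1}{2t^2}=2\big(Z-\tfrac1{2t}\big)^2-\tfrac{1}{2t^2}$, which is precisely $2Z^2-\tfrac2t Z$. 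The upshot is the clean evolution equation
\[
\partial_t Z = \partial_s^2 Z + 2\,(\partial_s\log\kappa)\,\partial_s Z + 2Z^2 - \frac{2}{t}Z .
\]
As a sanity check, for a shrinking circle $\kappa=(r_0^2-2t)^{-1/2}$ both sides reduce to the same expression, which reassures me the coefficients are right.

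With this equation in hand the maximum principle finishes the proof. Let $m(t)=\min_{S^1}Z(\cdot,t)$; at a spatial minimum $\partial_s Z=0$ and $\partial_s^2 Z\geq 0$, so in the barrier sense $m'(t)\geq 2m(t)^2-\tfrac{2}{t}m(t)=2m(t)\big(m(t)-\tfrac1t\big)$. For a smooth compact convex initial curve the quantities $\kappa$, $\kappa_s$, $\kappa_t$ are bounded for small $t$, so $\kappa_t/\kappa-\kappa_s^2/\kappa^2$ stays bounded while $\tfrac1{2t}\to+\infty$; hence $m(t)\to+\infty$ as $t\to 0^+$ and in particular $m>0$ for small $t$. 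The sign of the reaction term then traps $m$ above zero: if $m$ were negative somewhere, then at the last earlier time $\bar t$ with $m(\bar t)=0$ one would have both $m<0$ and $m-\tfrac1t<0$ just afterwards, forcing $2m(m-\tfrac1t)>0$ and so $m$ increasing there, a contradiction. Therefore $m(t)\geq 0$ for all $t$, which is the desired inequality.

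I expect the main obstacle to be the evolution computation in the second step: correctly commuting $\partial_t$ and $\partial_s$ through $\partial_s^2\log\kappa$ and then verifying that the many resulting curvature terms really do assemble into the perfect square $2\big(Z-\tfrac1{2t}\big)^2$. It is exactly the inclusion of the $\tfrac1{2t}$ term that cancels the leftover $-\tfrac1{2t^2}$ and renders the reaction term homogeneous in $Z$, and tracking the coefficients through the non-commutativity is where errors are easiest to make. A secondary but genuine technical point is justifying $m(t)\to+\infty$ as $t\to0^+$, which needs uniform control of the curvature and its derivatives near $t=0$ from short-time existence, and making the barrier-sense differential inequality for the Lipschitz function $m(t)$ rigorous on the compact curve $S^1$.
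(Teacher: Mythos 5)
Your proposal is correct, and it is essentially a reconstruction of Hamilton's original argument from [9]; note that the paper itself states this theorem without proof, deferring entirely to that reference, so there is no in-paper argument to compare against. I checked the key computation: writing $w=\log\kappa$ and $Q=w_{ss}+\kappa^2=w_t-w_s^2$, and using the commutator $[\partial_t,\partial_s]=\kappa^2\partial_s$ (which follows from $\partial_t\,ds=-\kappa^2\,ds$, i.e.\ Theorem 2.1.1) together with $\kappa_t=\kappa_{ss}+\kappa^3$, one indeed gets $\partial_t Q=Q_{ss}+2w_sQ_s+2Q^2$, and adding the $\tfrac{1}{2t}$ term converts the reaction term into $2Z^2-\tfrac{2}{t}Z$ exactly as you claim; your shrinking-circle sanity check is also consistent. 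The maximum-principle endgame (positivity of $m(t)=\min Z$ for small $t$ because $Q$ is bounded while $\tfrac{1}{2t}\to\infty$, then the sign of $2m(m-\tfrac1t)$ trapping $m$ above zero past the first hypothetical zero) is sound for a closed convex curve, where $\kappa$ is bounded away from zero on a short initial time interval and the minimum is attained on the compact $S^1$. The one caveat worth flagging is that in the proof of Grayson's theorem the inequality is invoked for a blow-up limit that is an ancient, possibly non-compact convex solution; there the maximum principle needs additional justification (control of $Z$ at spatial infinity, and replacing $[0,T)$ by shifted time intervals to dispense with the $\tfrac{1}{2t}$ term in the ancient case). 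That goes beyond the statement as written, so it is not a gap in your proof of the stated theorem, but it is the version actually used later in the paper.
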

	
	We now present Huisken's distance comparison principle between the extrinsic and intrinsic distances ([5]). It shows that embededness is preserved by CSF.
	
	\begin{theorem}[Huisken's Distance Comparison Principle $ \text{([5])} $]
		If a family of closed embedded curves $ X $ in the plane evolves by CSF, then the following equation \begin{equation}
			R(t) :=\sup_{x\neq y} \frac{L(t)}{\pi d(x,y,t)}sin\frac{\pi l(x,y,t)}{L(t)},
		\end{equation}
		where $ L(t) $ is the total length of the curve, $ l(x,y,t) $ is the intrinsic distance between $ X(x,t) $ and $ X(y,t) $, and $ d(x,y,t) = |X(x,t)-X(y,t)| $, is non-increasing in time.
	\end{theorem}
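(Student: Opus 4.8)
The plan is to prove this as a maximum principle for the two-point function
\[ Z(x,y,t) = \frac{L(t)}{\pi\, d(x,y,t)}\sin\frac{\pi\, l(x,y,t)}{L(t)}, \]
so that $R(t) = \sup_{x\neq y} Z$. First I would record the elementary identities for the comparison profile $\psi(l) = \tfrac{L}{\pi}\sin(\pi l/L)$: one checks $\psi' = \cos(\pi l/L)$, $\psi'' = -(\pi/L)^2\psi$, and $(\psi')^2 + (\pi/L)^2\psi^2 = 1$. These are exactly the relations that make a round circle the equality case, since on a circle of length $L$ the quantity $\psi(l)$ is literally the chord joining two points at arc-distance $l$. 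Because $\psi(l)/d \to 1$ as $x\to y$, and because embeddedness keeps $d>0$ for $x\neq y$, the supremum defining $R(t)$ is either $1$ (the circle) or is attained at an interior pair $(x_0,y_0)$ with $x_0\neq y_0$. It therefore suffices, by Hamilton's trick for suprema of time-dependent functions, to show that whenever $Z$ attains a spatial maximum at such a pair, $\partial_t Z \le 0$ there.

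Second, I would extract the spatial maximum conditions. Parametrizing by arc length at the instant $t_0$, writing $w = X(y_0)-X(x_0)$, $d = |w|$, and letting $T$, $N$ denote the Frenet frame, the first-order conditions $\partial_x Z = \partial_y Z = 0$ reduce, using $\partial_x d = -\langle w,T(x_0)\rangle/d$, $\partial_y d = \langle w,T(y_0)\rangle/d$, $\partial_x l = -1$ and $\partial_y l = +1$, to
\[ \langle w, T(x_0)\rangle = \langle w, T(y_0)\rangle = \frac{d^2\,\psi'}{\psi}. \]
Geometrically the chord makes equal angles with the two tangents. Differentiating once more and using the Frenet relation $\partial_s T = \kappa N$ gives the second-order data $Z_{xx}, Z_{yy}, Z_{xy}$, which introduce the curvatures $\kappa(x_0),\kappa(y_0)$ through $\partial_s^2 d$; the maximum forces the spatial Hessian to be negative semidefinite, i.e. $a^2 Z_{xx} + 2ab\,Z_{xy} + b^2 Z_{yy} \le 0$ for all $(a,b)$, and I would keep the direction $(a,b)$ free to be chosen at the end.

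Third, I would compute $\partial_t Z$. Here the evolution must account for the fact that the arc-length parametrization itself changes: under CSF one has $\partial_t(ds) = -\kappa^2\,ds$, so $\partial_t L = -\int \kappa^2\,ds$ by Theorem 2.1.1 and $\partial_t l = -\int_{\mathrm{arc}}\kappa^2\,ds$ along the sub-arc between the points, while $\partial_t d = \big(\kappa(y_0)\langle w,N(y_0)\rangle - \kappa(x_0)\langle w,N(x_0)\rangle\big)/d$ from $\partial_t X = \kappa N$. Since $\psi$ depends on both $l$ and $L$, the chain rule produces local curvature terms (from $\partial_t d$) together with genuinely nonlocal terms built from $\int \kappa^2$. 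I would then feed the first-order relations and the $\psi$-identities into this expression.

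The hard part will be the second-order computation and orchestrating the cancellation of curvature. Both $\partial_t d$ and the terms $\partial_s^2 d$ entering the Hessian carry the same structure $\kappa\langle w,N\rangle$ at each endpoint, so the local curvature in $\partial_t Z$ can be matched against that in the Hessian inequality by the choice of $(a,b)$; verifying that this matching is exact, and that the surviving terms, built only from $\psi, \psi', \psi''$ and the tangential projections $\langle w,T\rangle$ fixed above, collapse to a nonpositive quantity via $\psi'' = -(\pi/L)^2\psi$ and $(\psi')^2 + (\pi/L)^2\psi^2 = 1$, is the crux. The nonlocal terms from the evolution of $l$ and $L$ are by contrast favorable once $l$ is taken as the shorter arc, since then $\theta = \pi l/L \le \pi/2$ forces $\psi' = \cos\theta \ge 0$ and $\psi - l\psi' > 0$, so each contributes with the correct sign to $\partial_t Z \le 0$. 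With the pointwise inequality in hand, Hamilton's trick upgrades it to the non-increase of $R(t)$, which is the assertion.
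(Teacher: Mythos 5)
First, a point of reference: the paper itself does not prove this theorem --- it is stated as imported machinery with a citation to Huisken's article [5], so there is no in-paper argument to compare you against. Your outline is, structurally, exactly Huisken's original argument: treat $Z=\psi(l)/d$ with $\psi(l)=\tfrac{L}{\pi}\sin(\pi l/L)$ as a two-point function, note $Z\to 1$ on the diagonal so the supremum is either $1$ (the round circle) or attained at an interior pair, extract the equal-angle first-order conditions $\langle w,T(x_0)\rangle=\langle w,T(y_0)\rangle=d^{2}\psi'/\psi$, and close with Hamilton's trick by combining $\partial_t Z$ with the spatial second-variation inequality. The identities $\psi''=-(\pi/L)^{2}\psi$, $\psi-l\psi'\ge 0$, and the evolution laws $\partial_t L=-\int\kappa^{2}\,ds$, $\partial_t l=-\int_{\mathrm{arc}}\kappa^{2}\,ds$, $\partial_t d=\tfrac1d\langle w,\kappa(y_0)N(y_0)-\kappa(x_0)N(x_0)\rangle$ are all correctly identified.

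The gap is that the step you yourself call ``the crux'' is where the entire content of the theorem lives, and the mechanism you sketch for it does not close as described. In the direction $(a,b)=(1,1)$ --- the only direction in which $\partial l$ and $\partial d$ both vanish at the critical pair, so that the Hessian condition reduces cleanly to $(\partial_x+\partial_y)^{2}d\ge 0$ --- the local curvature terms do cancel against $\partial_t d$, but they leave behind the positive remainder
\begin{equation*}
\partial_t d \;\ge\; -\tfrac1d\,\bigl|T(y_0)-T(x_0)\bigr|^{2},
\end{equation*}
so the nonlocal terms are not ``favorable and discardable'': the term $\psi'\,\partial_t l$ must be used to absorb $|T(y_0)-T(x_0)|^{2}/d$. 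Running Cauchy--Schwarz, $|T(y_0)-T(x_0)|^{2}\le l\int_{\mathrm{arc}}\kappa^{2}\,ds$, the inequality you would need from this term alone is $l\psi\le d^{2}\psi'$; since $l\ge d$ and $\psi\ge d$ at a supremum with $R\ge 1$, this forces $\psi'\ge 1$, which holds only in the degenerate equality case. Huisken's actual proof therefore requires more than the $(1,1)$-direction: a case analysis at the critical pair and use of the second variation in the direction $(1,-1)$, where $(\partial_x-\partial_y)^{2}\psi=4\psi''=-4(\pi/L)^{2}\psi$ and the specific sine profile (not merely concavity of $\psi$) enters. Until that computation is carried out, what you have is a correct and well-informed plan, not a proof.
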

	
	The important part to take away from this theorem is that the intrinsic and extrinsic distance equation is bounded by $ R(0)<\infty $. Particularly this implies that the grim reaper solutions cannot arise as a blow-ip limit of CSF closed embedded curves.
	
	We finish this section with Huisken's proof of Grayson's Theorem ([6]) via the method of singularity blow-up which we present in full to demonstrate the utility of self-similar solutions. First we make the following definition of a blow-up point for CSF.
	\begin{definition}[Blow-up Point]
		We say that $ x_{0}\in\mathbb{R}^{2} $ is a blowup point if there are sequences $ t_{i}\rightarrow T $, $ p_{i}\in \Gamma_{t} $ such that $ |\kappa|(p_{i})\rightarrow\infty $ and $ p_{i}\rightarrow x_{0}$, i.e it achieves the curvature of the maximal existence point as defined in Theorem 2.1.6.
	\end{definition}
	
	\begin{theorem}[Grayson's Theorem $ \text{([6])} $]\label{Grayson}
		If $ \Gamma\subset\mathbb{R}^{2} $ is a closed embedded curve, then the curve shortening flow $ \{\Gamma_{t}\}_{t\in[0,T)} $ with $ \Gamma_{0}=\Gamma $ exists until $ T = \frac{A_{\Gamma}}{2\pi} $ and converges for $ t\rightarrow T $ to a round point, i.e. there exists a unique point $ x_{0}\in\mathbb{R}^{2} $ such that the rescaled flows \begin{equation}
			\Gamma^{\lambda}_{t}:=\lambda\cdot\big( \Gamma_{T+\lambda^{-2}t}-x_{0}\big)
		\end{equation} 
	converge for $ \lambda\rightarrow\infty $ to the round shrinking circle $ \{\partial B_{\sqrt{-2t}} \}_{t\in(-\infty,0)} $
	\end{theorem}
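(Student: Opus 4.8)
The plan is to follow Huisken's blow-up argument, assembling the machinery collected above. First I would invoke the Existence and Uniqueness Theorem to place the flow on a maximal interval $[0,T)$ on which $\sup|\kappa| = \infty$ as $t \to T$, so that a genuine singularity forms. To pin down $T$, I would compute the evolution of the enclosed area $A(t)$. Combining $\partial_t \gamma = \kappa N$ with the divergence theorem gives $\partial_t A = -\int_{\Gamma_t} \kappa\, ds$, and since the total curvature of a closed embedded plane curve equals $2\pi$, we obtain $A(t) = A_\Gamma - 2\pi t$. This forces the area to vanish at time $A_\Gamma/(2\pi)$ and therefore $T \le A_\Gamma/(2\pi)$; equality will follow once the curve is shown to contract to a point rather than to develop a singularity earlier.

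Next I would carry out the parabolic blow-up. Fixing a blow-up point $x_0$ as in Definition 2.1.10 and forming the rescaled flows $\Gamma^\lambda_t = \lambda(\Gamma_{T + \lambda^{-2} t} - x_0)$, the scaling invariance of CSF ensures that each $\Gamma^\lambda_t$ is again a curve shortening flow. The key tool here is Huisken's Monotonicity Formula: because $\int_{\Gamma_t} \rho_{X_0}\, ds$ is monotone non-increasing and bounded below, its derivative must tend to zero along the flow, so in any limit of the rescalings the right-hand side integrand vanishes. This yields the self-shrinker equation $\kappa + \frac{\langle \gamma, N\rangle}{2(t_0 - t)} = 0$, meaning that every subsequential limit of the blow-up is a self-similarly shrinking solution of CSF.

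Then I would extract a genuine, smooth, embedded limit. The uniform bound on the Gaussian density furnished by the monotonicity formula feeds into the Local Regularity Theorem (Theorem 2.1.7) to produce uniform curvature estimates on the rescaled flows, which allow me to pass to a smoothly convergent subsequence of the $\Gamma^\lambda_t$. The same theorem guarantees that the limit is nontrivial: at a blow-up point the density satisfies $\Theta \ge 1 + \epsilon$, so the limit cannot be a multiplicity-one straight line, whose density is exactly $1$. Hamilton's Harnack Inequality further constrains the ancient limit, and Huisken's Distance Comparison Principle keeps the ratio $R(t)$ bounded by $R(0) < \infty$, so embeddedness passes to the limit and, as observed after that theorem, the grim reaper and the non-embedded Abresch--Langer shrinkers are excluded.

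It remains to classify the limit and conclude. The only embedded self-shrinking solution of CSF in the plane other than the excluded line is the round shrinking circle, so the blow-up limit must be $\{\partial B_{\sqrt{-2t}}\}$. Propagating this back through the rescaling, the flow converges as $t \to T$ to a round point $x_0$ with the stated convergence of $\Gamma^\lambda_t$, and since the curve contracts to a single point the area vanishes exactly at $T$, giving $T = A_\Gamma/(2\pi)$. I expect the main obstacle to be the compactness step, namely producing a nontrivial smooth limit from the rescaled flows: one must simultaneously rule out curvature concentrating onto a higher-multiplicity or singular object and prevent the limit from degenerating to a line. This is precisely where the monotonicity formula, the local regularity theorem, and the distance comparison principle must be used in concert, followed by the classification of embedded planar self-shrinkers.
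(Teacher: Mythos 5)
Your outline follows the same overall strategy as the paper (Huisken's blow-up analysis: rescale at the singularity, use the monotonicity formula to force the limit to satisfy the shrinker equation $\kappa + \tfrac{1}{2}\langle\gamma,N\rangle = 0$, rule out the line by local regularity, and conclude the limit is the circle of radius $\sqrt{2}$ at $t=-1$), and your derivation of $T = A_\Gamma/(2\pi)$ from $\partial_t A = -\int\kappa\,ds = -2\pi$ is a nice addition the paper omits. However, there is a genuine gap at the step you yourself flag as the main obstacle: the compactness of the rescaled flows. You claim that ``the uniform bound on the Gaussian density furnished by the monotonicity formula feeds into the Local Regularity Theorem to produce uniform curvature estimates.'' This does not work as stated: Theorem 2.1.7 requires the density ratio to be \emph{smaller than} $1+\epsilon$, a smallness condition, not merely a uniform upper bound, and monotonicity alone does not supply that near the singular point (indeed at the blow-up point the density is at least $1+\epsilon$, as you note when excluding the line). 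So nothing in your argument yet yields the curvature bounds $\max_{\Gamma_t^\lambda}|\kappa| \le C/\sqrt{-t}$ needed to extract a smooth subsequential limit.

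The paper closes exactly this gap with the Type I/Type II dichotomy, which is the real content of the proof and which your proposal skips. One first assumes a Type II rate, $\limsup_{t\to T}\big((T-t)\max\kappa^2\big)=\infty$, performs a Hamilton-type point selection to produce an eternal limit with $\kappa=1$ at the origin and $\kappa^2\le 1$ everywhere, uses the lemma $\tfrac{d}{dt}\int|\kappa|\,ds = -2\sum_{\kappa=0}|\kappa_s|$ plus analyticity to show the limit is either a line (contradicting $\kappa=1$) or strictly convex, and then invokes equality in Hamilton's Harnack inequality to identify the convex eternal limit as a translating soliton, hence a grim reaper, which violates the bound $R(t)\le R(0)<\infty$ from Huisken's distance comparison principle. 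This contradiction establishes the Type I bound $\limsup\big((T-t)\max\kappa^2\big)<\infty$, which is what actually gives the curvature estimate (35) on the rescalings and makes the smooth subconvergence, and then the monotonicity and local regularity arguments you describe, go through. In your write-up, Hamilton's Harnack inequality and the distance comparison principle appear only as vague constraints on an already-extracted limit; in the correct argument they are deployed earlier and for a different purpose, namely to rule out the Type II scenario in which no such limit could be extracted in the first place.
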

	
	To prove this theorem the following lemma is needed\begin{lemma}
		Along CSF we have that \begin{equation}
			\frac{d}{dt}\int_{\Gamma_{t}}|\kappa|ds = -2\sum_{x:\kappa(x,t) = 0}|\kappa_{s}|(x,t)
		\end{equation}
	\end{lemma}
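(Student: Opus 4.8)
The plan is to differentiate the functional $\int_{\Gamma_t}|\kappa|\,ds$ by pulling everything back to the fixed parameter domain $S^1$ and differentiating under the integral sign. Two ingredients are needed: the curvature evolution $\kappa_t = \kappa_{ss}+\kappa^3$ established in the Evolution of Curvature proposition, and the evolution of the length element $\partial_t(ds) = -\kappa^2\,ds$, which falls out of the same computation as in the Evolution of Arclength theorem since $\partial_t|\gamma_x| = -\kappa^2|\gamma_x|$. Writing $ds = |\gamma_x|\,dx$ and differentiating, at every point where $\kappa\neq 0$ we have $\partial_t|\kappa| = \operatorname{sgn}(\kappa)\kappa_t$, so that
\[
\frac{d}{dt}\int_{\Gamma_t}|\kappa|\,ds = \int_{\Gamma_t}\Big(\operatorname{sgn}(\kappa)(\kappa_{ss}+\kappa^3) - |\kappa|\kappa^2\Big)\,ds.
\]
The crucial observation is that the cubic term cancels against the length-element term, because $\operatorname{sgn}(\kappa)\kappa^3 = |\kappa|^3 = |\kappa|\kappa^2$. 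This leaves the purely second-order expression $\int_{\Gamma_t}\operatorname{sgn}(\kappa)\kappa_{ss}\,ds$, so the entire content of the lemma reduces to evaluating this one integral.

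Next I would compute $\int_{\Gamma_t}\operatorname{sgn}(\kappa)\kappa_{ss}\,ds$ by decomposing the curve at the zeros of $\kappa$. Assuming (generically) that $\kappa$ has finitely many simple zeros, the curve splits into finitely many arcs on each of which $\operatorname{sgn}(\kappa)$ is a constant $\epsilon_I = \pm 1$, and on each arc $I$ the fundamental theorem of calculus gives $\int_I \kappa_{ss}\,ds = \kappa_s\big|_{\partial I}$, so that
\[
\int_{\Gamma_t}\operatorname{sgn}(\kappa)\kappa_{ss}\,ds = \sum_I \epsilon_I\,\big[\kappa_s\big]_{\partial I}.
\]
Each zero $x$ is the common endpoint of two adjacent arcs on which $\kappa$ has opposite signs, and collecting the two boundary contributions at $x$ yields $-2\kappa_s(x)$ when $\kappa$ crosses from negative to positive (so $\kappa_s(x)>0$) and $+2\kappa_s(x)$ when it crosses from positive to negative (so $\kappa_s(x)<0$). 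In both cases this equals $-2|\kappa_s|(x)$, and summing over all zeros produces exactly $-2\sum_{x:\kappa(x,t)=0}|\kappa_s|(x,t)$, as claimed.

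The main obstacle is the non-differentiability of $|\kappa|$ at the zeros of $\kappa$, which makes both the differentiation under the integral and the book-keeping of the boundary terms delicate. The clean way around this is to approximate $|\kappa|$ by the smooth functions $\sqrt{\kappa^2+\varepsilon^2}$, carry out the computation for each $\varepsilon>0$ where every step is legitimate, and then let $\varepsilon\to 0$; the contribution of a small neighborhood of each transverse zero concentrates into the jump $-2|\kappa_s|$ found above, while the integrand elsewhere remains dominated so the limit passes inside the integral. One also needs the zeros to be isolated and non-degenerate for generic $t$, and this is where I would invoke that $\kappa$ solves the parabolic equation $\kappa_t = \kappa_{ss}+\kappa^3$: by the Sturmian theory of zeros of solutions of parabolic equations, the number of zeros of $\kappa$ is finite and non-increasing in time, and for all but discretely many times the zeros are simple. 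Establishing this transversality, rather than the sign computation itself, is the genuine technical heart of the argument.
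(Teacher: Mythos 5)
Your proposal is correct and follows essentially the same route as the paper: the paper writes $\int_{\Gamma_t}|\kappa|\,ds=\int_{\{\kappa\geq 0\}}\kappa\,ds-\int_{\{\kappa\leq 0\}}\kappa\,ds$, uses the same cancellation of the $\kappa^3$ term against the $-\kappa^2\,ds$ variation of the length element to reduce to $\int\operatorname{sgn}(\kappa)\kappa_{ss}\,ds$, and integrates by parts on the arcs between the finitely many inflection points to collect the $-2|\kappa_s|$ boundary contributions. The only difference is cosmetic: where you invoke the $\sqrt{\kappa^2+\varepsilon^2}$ regularization and Sturmian zero-counting, the paper simply cites analyticity of CSF solutions to guarantee finitely many zeros and leaves the bookkeeping implicit.
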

	
	\begin{proof}
		Since solutions of CSF are analytic there are only a finite number of inflection points, giving \[ \frac{d}{dt}\Big( \int_{\{\kappa\geq0\}}\kappa ds-\int_{\{\kappa\leq0\}}\kappa ds \Big) = \int_{\{\kappa\geq0\}}\kappa_{ss} ds-\int_{\{\kappa\leq0\}}\kappa_{ss} ds \] and integrating by parts gives the results.
	\end{proof}
	\begin{proof}
		(Proof of Theorem \ref{Grayson}) Let $ T<\infty $ be the maximal existence time of CSF starting at $ \Gamma $. Suppose towards a contradiction \begin{equation}
			\limsup_{t\rightarrow T}\Big( (T-t)\max_{\Gamma_{t}}\kappa^{2} \Big) = \infty
		\end{equation}
		i.e. we have a type II blow-up. For any integer $ k\geq1/T $ we let $ t_{k}\in [0,T-\frac{1}{k}], x_{k}\in S^{1}$ be such that \begin{equation}
			\kappa^{2}(x_{k},t_{k})(T-1/k-t_{k}) = \max_{t\leq T-1/k,x\in S^{1}}\kappa^{2}(x,t)(T-1/k-t).
		\end{equation}
		
		We also set \begin{equation*}
			\lambda_{k}=\kappa(x_{k},t_{k}),\quad t^{(0)}_{k} = -\lambda^{2}_{k}t_{k}, \quad t^{(1)}_{k} = \lambda^{2}_{k}(T-1/k-t_{k}).
		\end{equation*}
		
		We can then say, thanks to (27), that for any $ M\leq \infty $ there exist $ \bar{t}< T $ and $ \bar{x}\in S^{1} $ such that $ \kappa^{2}(\bar{x},\bar{t})(T-\bar{t})
		>2M $. For $ k $ large enough we have \begin{equation}
			\bar{t}<T-1/k,\quad\kappa^{2}(\bar{x},\bar{t})(T-\bar{t}-1/k)>M.
		\end{equation}
		
		Then it follows that \begin{equation}
			t^{(1)}_{k} = \kappa^{2}(x_{k},t_{k})(T-1/k-t_{k})\geq\kappa^{2}(\bar{x},\bar{t})(T-1/k-\bar{t})>M.
		\end{equation}
		So then since $ t_{k}^{(1)} $ is increasing and $ M $ is arbitrary, this implies $ t^{(1)}_{k}\rightarrow\infty $, so then $ \lambda_{k}\rightarrow\infty,t_{k}\rightarrow T $ and $ t^{(0)}_{k}\rightarrow-\infty $.
		
		Then consider the sequence  of the rescaled flow \begin{equation}
			\Gamma_{t}^{k} = \lambda_{k}\cdot\Big( \Gamma_{t_{k}+\lambda_{k}^{-2}t}-x_{k} \Big),\qquad t\in[t_{k}^{(0)}, t_{k}^{(1)}).
		\end{equation}
		and we find that by construction, $ \Gamma^{k}_{t} $ has $ \kappa_{k}=1 $ at $ t =0 $ at the origin. Then by our choice $ (x_{k},t_{k}) $ implies \begin{equation}
			\kappa^{2}_{k}(x,t)\leq\frac{T-1/k-t_{k}}{T-1/k-t_{k}-\lambda^{2}_{k}t} = \frac{t^{(1)}_{k}}{t^{(1)}_{k}-t},\quad t\in[t_{k}^{(0)},t^{(1)}_{k}).
		\end{equation}
		Then we have that after passing to a subsequence, we get the smooth limit $ \{\Gamma_{t}^{\infty} \}t\in(-\infty,\infty) $. Then we have by our previous formulation that the limit $ \kappa = 1 $ at the time 0 at the origin, and $ \kappa^{2}\leq1 $ at every point for all time. Then by the previous Lemma the limit satisfies \begin{equation}
			\int_{-\infty}^{\infty}\sum_{x:\kappa(x,t) = 0}|\kappa_{s}|(x,t)dt = 0
		\end{equation}
		i.e. if $ \kappa = 0 $ then $ \kappa_{s} = 0 $ as well. Then, using the evolution equations and analyticity this implies that $ \{\Gamma_{t}^{\infty}\}_{t\in(-\infty,\infty)} $ is a straight line, a contradiction.
		
		This then gives that $ \kappa>0 $, and by equality in the case of Hamilton's Harnack Inequality, and the fact that a translating soliton for CSF must be a grim reaper curve, this contradicts the bound for the ratio between intrinsic and extrinsic distance.
		
		So then we have shown the a type I blow-up rate \begin{equation}
			\limsup_{t\rightarrow T}\big( (T-t)\max_{\Gamma_{t}}\kappa^{2} \big)<\infty
		\end{equation}
		
		To finish the discussion we simply need to prove the following claim:\begin{claim}
			Define the parabolic scaling of a family of curves that satisfy CSF $\{\Gamma^{\lambda}_{t}\}_{t\in[-\lambda^{2}T,0)} $ where $ \Gamma^{\lambda}_{t}:=\lambda\cdot(\Gamma_{T+\lambda^{-2}t}-x_{0}) $. Then, for the limit as $ \lambda\rightarrow\infty $, these converge smoothly to a family of round shrinking circles $ \{ \partial B_{\sqrt{-2t}}(0) \}_{t\in(-\infty,0)}. $
		\end{claim}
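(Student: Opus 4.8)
The plan is to exploit the type I blow-up rate just established, together with Huisken's Monotonicity Formula (Theorem 2.1.5), to identify the limit of the rescaled flows as a shrinking self-similar solution, and then to classify that self-shrinker as a round circle. First I would convert the type I bound into a uniform curvature bound on the rescaled flows. Writing $\kappa_\lambda$ for the curvature of $\Gamma^\lambda_t$, the scaling relation $\kappa_\lambda = \lambda^{-1}\kappa$ combined with $\limsup_{t\to T}(T-t)\max_{\Gamma_t}\kappa^2<\infty$ and the time reparametrization $t \mapsto T+\lambda^{-2}t$ yields a bound of the form
\[
\kappa_\lambda^2(x,t) \leq \frac{C}{|t|}
\]
on every compact subinterval of $(-\infty,0)$. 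This uniform estimate, which the type I hypothesis is precisely designed to provide, is the starting point for everything that follows.

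Next I would promote this $C^0$ curvature bound to bounds on all arclength derivatives $\kappa_s,\kappa_{ss},\dots$. Here the Local Regularity Theorem (Theorem 2.1.7) is the natural tool: the uniform curvature bound controls the Gaussian density ratios $\Theta(\{\Gamma^\lambda_t\},\bar X,r)$, and interior parabolic estimates for this quasi-parabolic flow then upgrade the bound on $\kappa$ to bounds on all higher derivatives on slightly smaller parabolic balls. With uniform $C^\infty$ estimates in hand, an Arzel\`a--Ascoli compactness argument extracts a subsequence $\lambda_j\to\infty$ along which $\Gamma^{\lambda_j}_t$ converges smoothly on compact subsets of space-time to a limiting curve shortening flow $\{\Gamma^\infty_t\}_{t\in(-\infty,0)}$.

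The heart of the argument is to show that this limit is a shrinking self-shrinker. Since Huisken's Monotonicity Formula is invariant under parabolic rescaling about $X_0=(x_0,T)$, the Gaussian density $\int_{\Gamma^\lambda_t}\rho_{X_0}\,ds$ computed on the rescaled flow equals the density of the original flow at the correspondingly rescaled time; as $t\to T$ this quantity is monotone and bounded, hence converges to a finite limit $\Theta_\infty$. Integrating the monotonicity identity over any finite time interval and letting $\lambda\to\infty$ forces the time-integral of the soliton error term to vanish, so on the limit flow one has, with the center shifted to $t_0=0$,
\[
\kappa + \frac{\langle\gamma,N\rangle}{2(-t)} = 0
\]
identically. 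Thus $\{\Gamma^\infty_t\}$ is a self-shrinker. Because convexity is preserved under the flow (Corollary 2.1.3) we already know $\kappa>0$, and the limit curves are closed and embedded; the only closed embedded convex self-shrinker in the plane is the round circle. Solving the shrinking-circle ordinary differential equation $\partial_t r = -1/r$ then fixes $r(t)=\sqrt{-2t}$, so $\Gamma^\infty_t = \partial B_{\sqrt{-2t}}(0)$, which one checks directly satisfies the soliton equation since there $\kappa=1/r$ and $\langle\gamma,N\rangle=-r$.

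Finally, to obtain convergence of the full family rather than of a subsequence, I would observe that the limit is unique: every subsequential limit satisfies the same soliton equation with the same density value $\Theta_\infty$ and the same convexity, hence is the same family of shrinking circles, so $\Gamma^\lambda_t\to\partial B_{\sqrt{-2t}}(0)$ as $\lambda\to\infty$ without passing to a subsequence. I expect the main obstacle to be the passage to the smooth limit: securing uniform higher-order estimates and justifying smooth subsequential convergence is the most technical step, and extracting the exact soliton equation from the monotonicity formula requires controlling the error term uniformly in $\lambda$. By comparison, once convexity and the soliton equation are available, the identification of the limit as the round shrinking circle is comparatively elementary.
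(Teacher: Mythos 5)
Your overall strategy is the same as the paper's: use the type I bound to get the uniform curvature estimate $\max_{\Gamma^{\lambda}_{t}}|\kappa|\leq C/\sqrt{-t}$, extract a smooth subsequential limit, feed the rescaled flows into Huisken's Monotonicity Formula so that the integrated soliton error term vanishes as $\lambda\rightarrow\infty$, conclude that the limit is a self-shrinker satisfying $\kappa+\tfrac{1}{2}\langle\gamma,N\rangle=0$ at $t=-1$, and then identify it as the round circle. Your additional remarks on upgrading the $C^{0}$ curvature bound to higher-order estimates via interior parabolic regularity, and on uniqueness of the limit giving convergence of the full family rather than a subsequence, are points the paper glosses over and are worth making explicit.

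There is, however, a genuine gap in your classification step. You write that ``because convexity is preserved under the flow (Corollary 2.1.3) we already know $\kappa>0$,'' but that corollary only applies when the initial curve is convex, and Grayson's Theorem assumes only a closed embedded initial curve; the conclusion $\kappa>0$ in the earlier part of the proof pertains to the hypothetical type II limit, not to the actual flow. Relatedly, you assert that the limit curves are closed without justification, yet a priori a blow-up limit of closed curves can be non-compact (the type II analysis produced a grim reaper, for instance), and the straight line is itself an embedded self-shrinker that must be excluded. The paper closes exactly this hole with the Local Regularity Theorem: if the limit were a line its Gaussian density would be $1$, forcing a curvature bound near $X_{0}$ and contradicting that $x_{0}$ is a blow-up point; hence the density is at least $1+\epsilon$, the line is excluded, and the Abresch--Langer classification of embedded planar self-shrinkers (embeddedness being preserved by Huisken's distance comparison principle) leaves only the circle of radius $\sqrt{2}$ at $t=-1$. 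Replacing your convexity argument with this density argument repairs the proof; the rest of your outline is sound.
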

		To finish the proof we rescale the blowup rate and try to show that the limit intersects families of round shrinking circles.
		\begin{equation}
			\max_{\Gamma^{\lambda}_{t}}|\kappa|\leq\frac{C}{\sqrt{-t}},\qquad t\in[-\lambda^{2}T,0).
		\end{equation}
		
		We have in our previous argument found a subsequence of $ \lambda_{k} $ such that $ \{ \Gamma^{\lambda_{k_{i}}}_{t} \} $ converges smoothly to a limit. By construction, the limit is an ancient solution of CSF. Then we can use the definition of blow-up points and comparing with round shrinking circles we can see that $ \Gamma^{\lambda}_{-1}\cap B_{2}(x_{0}) \neq\emptyset$ for $ \lambda $ large enough. So then the limit is not empty. Then by Huisken's Monotonicity Formula for all $ t_{1}<t_{2}<0 $ it gives that \begin{equation}
			\int_{t_{1}}^{t_{2}}\int_{\Gamma_{t}^{\lambda}}\Big|\kappa+\frac{\langle\gamma,N\rangle}{2(t_{0}-t)} \Big|^{2}\rho dsdt =-\Big[\int_{\Gamma_{t}^{\lambda}} \rho_{X_{0}}ds\Big]^{T-t_{2}/\lambda^{2}}_{T-t_{1}/\lambda^{2}}\rightarrow 0
		\end{equation}
		as $ \lambda\rightarrow 0 $. So then the limit is self-similarly shrinking and completely determined by its slice at $ t=-1 $ satisfying \begin{equation}
			\kappa+\frac{\langle \gamma,N\rangle}{2} = 0.
		\end{equation}
		Then by the local regularity theorem it cannot be a straight line, so then $ \Gamma_{-1} $ must be a circle of radius $\sqrt{2} $ which completes the theorem.
	\end{proof}
	\subsection{Self-Similar Solutions of Curve Shortening Flow}
	In this section we summarize the work of Halldorsson ([10]) who  gave the classification of all self-similar embedded curves that evolve under CSF in the plane. We also direct the reader to the work of Altschuler et al ([11]) who classified the solitons of CSF in $ \mathbb{R}^{n} $ through the creation of a group acting on curves evolving under CSF that produced self-similar solutions that turned the PDE into an ordinary differential equation.
	
	We return now to the work of Halldorsson and follow his steps to derive the family of ODE's that provide self-similar solutions and state his theorems in full. In [10] self-similar solutions were classified under the following classifications:\begin{itemize}
		\item Translating Curves: Only the Grim Reaper curve
		\item Expanding Curves: A one dimensional family of curves. Each is properly embedded and asymptotic to the boundary of a cone.
		\item Shrinking Curves: A one-dimensional family of curves. Each is contained in an annulus and consists of identical excursions between both boundaries.
		\item Rotating Curves: A one-dimensional family of curves. Each is properly embedded and spirals out to infinity.
		\item Rotating and Expanding: A two dimensional family of curves, properly embedded that spiral out to infinity.
		\item Rotating and Shrinking Curves: A two-dimensional family of curves, with an end asymptotic to a circle and the other either similarly asymptotic or spiraling out to infinity.
	\end{itemize}
	
	Consider instead of CSF acting on curves mapping $\mathbb{R}  $ to  $ \mathbb{R}^{2}$ that they were mapped instead to the $ \mathbb{C} $, the change of space allows one to simplify the action of rotations on the curve. Now, following the steps of Halldorsson let $ X:\mathbb{R}\times I\rightarrow\mathbb{C} $ be a curve evolving under CSF, and being a self-similar solution it is of the form \begin{equation}
		\hat{X}(x,t) = g(t)e^{if(t)}X(x)+H(t)
	\end{equation}
	where $ I $ is an interval 0, and $ f,g,H $ are all differentiable functions such that $ f(0) = 0,g(0) =1 $ and $ H(0) =0 $ so that $ \hat{X}(x,0) = X(x) $.
	
	In $ \mathbb{C} $ we define the normal vector $ N(x,t) =iT(x,t) $. Then plugging in (38) into the definition of CSF gives\begin{equation}
		g^{2}(t)f'(t)\langle X(x),T(x)\rangle +g(t)g'(t)\langle X,N\rangle+g(t)\langle e^{-if(t)}H'(t),N\rangle = \kappa(x).
	\end{equation} 
	
	Since this equation must hold for all $ (x,t)\in\mathbb{R}\times I $.  For $ t =0 $ the curve must then satisfy the following ODE:\begin{equation}
		A\langle X,T\rangle+B\langle X,N\rangle+\langle C,N\rangle=\kappa(x)
	\end{equation}
	where $ A =f'(0),\quad B =g'(0),\quad\text{and}\quad C=H'(0) $.
	
	Looking at solutions where the dilation term vanishes gives \begin{equation}
		A\langle X,T\rangle+B\langle X,N\rangle =\kappa.
	\end{equation}
	We now look to prove the following theorem:
		\begin{theorem}[Existence of Embedded Self-Similar Curves $ \text{[10]} $]
			For each value of A and B there exists an immersed curve $ X $ satisfying (41).
		\end{theorem}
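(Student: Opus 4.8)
The plan is to convert the geometric equation (41) into a system of ordinary differential equations for scalar quantities along the curve and then invoke the standard existence theory for ODEs. First I would parametrize $X$ by arc length $s$, so that $T = X'$, $N = iT$, and the Frenet relations $T' = \kappa N$, $N' = -\kappa T$ hold. Introducing the two scalar functions $u(s) = \langle X, T\rangle$ and $v(s) = \langle X, N\rangle$, differentiation together with the Frenet relations gives
\begin{align*}
u' &= \langle T, T\rangle + \langle X, \kappa N\rangle = 1 + \kappa v,\\
v' &= \langle T, N\rangle - \langle X, \kappa T\rangle = -\kappa u.
\end{align*}
Equation (41) now reads $\kappa = Au + Bv$, and substituting this closes the system into the autonomous polynomial system
\begin{align*}
u' &= 1 + (Au + Bv)v,\\
v' &= -(Au + Bv)u,
\end{align*}
which is defined for every pair of constants $A, B$.

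Next I would solve this system. Since the right-hand sides are polynomials, they are smooth and hence locally Lipschitz, so the Picard--Lindel\"of theorem yields a unique solution $(u,v)$ through any prescribed initial data $(u(0),v(0))$ on some maximal interval. To upgrade this to a solution on all of $\mathbb{R}$ I would rule out finite-time blow-up with an a priori bound: setting $r = (u^2 + v^2)^{1/2}$, one computes $(u^2+v^2)' = 2uu' + 2vv' = 2u(1+\kappa v) - 2\kappa uv = 2u$, so that $|r'| \le 1$ wherever $r > 0$. Thus $r$ grows at most linearly, $r(s) \le r(0) + |s|$, the solution cannot escape to infinity in finite arc length, and therefore $(u,v)$ — and hence $\kappa = Au + Bv$ — is defined for all $s \in \mathbb{R}$.

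With the curvature function $\kappa(s) = Au(s) + Bv(s)$ in hand I would reconstruct the curve in the usual way: set $\theta(s) = \int_0^s \kappa$, let $T(s) = e^{i\theta(s)}$ and $N(s) = iT(s)$, and define $X(s) = X_0 + \int_0^s T$. By construction this $X$ has unit speed, hence is immersed, and its curvature is exactly $\theta' = \kappa$. The only remaining freedom is the base point $X_0$, which I would choose as $X_0 = u(0)T(0) + v(0)N(0)$ so that $\langle X, T\rangle$ and $\langle X, N\rangle$ take the values $u(0)$ and $v(0)$ at $s = 0$.

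The step I expect to be the crux is verifying that the reconstructed curve genuinely satisfies (41), rather than merely having the prescribed curvature. For the reconstructed $X$, the functions $p = \langle X, T\rangle$ and $q = \langle X, N\rangle$ satisfy $p' = 1 + \kappa q$ and $q' = -\kappa p$ by exactly the computation above — this is the same linear inhomogeneous system solved by $(u,v)$, with the same coefficient function $\kappa(s)$. Since the initial data were matched, uniqueness forces $p \equiv u$ and $q \equiv v$. Consequently $A\langle X, T\rangle + B\langle X, N\rangle = Au + Bv = \kappa$, which is precisely (41). The delicate points are thus this closure/consistency argument and the global-existence bound; the local existence itself is routine.
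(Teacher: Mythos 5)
Your proof is correct and follows essentially the same route as the paper: both reduce (41) to a planar autonomous ODE system for the tangential and normal components of the position vector and then reconstruct the curve from $\theta(s)=\int_0^s\kappa$; your system in $(u,v)=(\langle X,T\rangle,\langle X,N\rangle)$ is exactly the paper's system in $(x,y)=(A-iB)(u+iv)$ after that fixed linear change of coordinates, under which $x=Au+Bv=\kappa$. The only substantive additions on your side are the a priori bound $(u^2+v^2)'=2u$ ruling out finite-time blow-up (which the paper leaves implicit) and the use of ODE uniqueness to close the consistency step, where the paper instead verifies (41) directly from the explicit formula $X=e^{i\theta}(x+iy)/(A-iB)$.
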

	
	To ensure that (40) exists for all time choose $ g^{2}f' =A $ and $ g(t)g'(t)=B $ for all $ t\in I $. We choose $ f $ and $ g $ to be \[ f(t )=\begin{cases}
		  \frac{A}{2B}\log(2Bt+1)\quad&\text{if}\quad B\neq0,\\
		  At\quad&\text{if}\quad B=0
	\end{cases}\] and \begin{equation}
		g(t) =\sqrt{2Bt+1}.
	\end{equation}
	Thanks to these equations we see that $ X $ rotates around the origin, with exception of if $ A=0 $ of course, and dilates outwards for $ B>0 $ and inwards if $ B<0 $. Including the rotation term $ C $ only causes the curve to screw-dilate around  the point $ \frac{-C}{B+iA} $. For curves that only translate we have that it can only give a Grim Reaper Curve as was shown in [13].
	
	Now consider a parametrization by arclength and use the Frenet equations to obtain the following relations\begin{align}
		\frac{d}{ds}\langle X,T\rangle &= 1+\kappa\langle X,N\rangle,\\
		\frac{d}{ds}\langle X,N\rangle &=-\kappa\langle X,N\rangle.
	\end{align}
	
	Then define the equations\begin{align}
		x &= A\langle X,T\rangle+B\langle X,N\rangle\\
		y &= -N\langle X,T\rangle+A\langle X,N\rangle\\
		x+iy &= (A-iB)(\langle X,T\rangle+i\langle X,N\rangle),		
	\end{align}
	that satisfy \begin{align}
		x' = \kappa y+A,\\
		y' = -\kappa x-B.  
	\end{align}
	and from these equations we can rewrite $ X $ to be \begin{equation}
		X =e^{i\theta(s)}\frac{x+iy}{A-iB}
	\end{equation}
	where $ \theta(s) = \int_{0}^{s}\kappa(z)dz+\theta_{0} $ and $ T(0) = e^{i\theta_{0}} $. Since we now look for curves that satisfy $ x =\kappa $, we substitute this into (48) and (49), as well as the definition of $ \theta $ to obtain \begin{align}
		x'&=xy+A\\
		y'&=-x^{2}-B
	\end{align}
	with initial conditions $ x_{0} $ and $ y_{0} $.
	
	We then see that by these equations \[ X' = e^{i\theta}, \] so that then $ T = e^{i\theta} $ and $ X $ is parametrized by arclength, so then $ \kappa = \theta' =x $. To finish the proof we simply run through the following calculation\begin{align}
		A\langle X,T\rangle+B\langle X,N\rangle &=\langle X,(A+iB)Y\rangle\\
		&=Re(X(A-iB)e^{-i\theta})\\
		&=x\\
		&=\kappa
	\end{align}
	finishing the theorem.
	
	All the possible values of $ A,B $ create then a 2 parameter family of ODEs that govern the creation of self-similar curves for CSF, and are classified as in the beginning of this section. The differing possible values for $ A $ and $ B $ are what splits CSF into the following 4 cases:\begin{itemize}
		\item $ A\neq0 $ and $ B\geq 0$ gives rotating expanding curves.
		\item $ A\neq0 $ and $ B< 0$ gives rotating shrinking curves.
		\item $ A=0 $ and $ B< 0$ gives only shrinking curves.
		\item $ A=0 $ and $ B> 0$ gives only expanding curves.
	\end{itemize}
	
	Halldorson then goes on a case by case basis of solving the ODEs and finding their significant properties and deriving the following theorems which we present without proof and direct the reader to [10] in order to not only see their proof but also the accompanying graphics.
	
	\begin{theorem}[$ A\neq0 $ and $ B\geq 0$]
		The curves are properly embedded, have one point closest to the
		origin and consist of two arms coming out from this point which strictly go away from the origin to infinity. Each arm has infinite total curvature and spirals infinitely many circles around the origin. The curvature goes to 0 along each arm, and
		their limiting growing direction is $ B + iA $ times the location.
		
		The curves form a one-dimensional family parametrized by their distance to the origin, which can take on any value in $[ 0, \infty) $.
		
		If $ B = 0 $, then under the CSF the curves rotate forever with constant angula speed A.
		
		If $ B > 0 $, then under the CSF the curves rotate and expand forever with angular	function $ \frac{A}{2B}\log(2Bt + 1) $ and scaling function $ 
		\sqrt{2Bt + 1}. $
	\end{theorem}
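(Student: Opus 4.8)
The entire statement is a qualitative study of the planar autonomous system $x'=xy+A$, $y'=-x^{2}-B$ obtained by setting $x=\kappa$, so the plan is to read off the geometry of $X$ from the phase portrait of $(x,y)$. The key bridge is the identity $|X|^{2}=\dfrac{x^{2}+y^{2}}{A^{2}+B^{2}}$, which follows from $X=e^{i\theta}(x+iy)/(A-iB)$; thus the distance to the origin is governed entirely by $r^{2}:=x^{2}+y^{2}$. First I would record the two computations that drive everything: $\frac{d}{ds}r^{2}=2(Ax-By)$ and the divergence $\partial_x(xy+A)+\partial_y(-x^{2}-B)=y$. From the first, Cauchy--Schwarz gives $|\frac{d}{ds}r|\le\sqrt{A^{2}+B^{2}}$, so $r$ grows at most linearly and the solution exists for all $s\in\mathbb{R}$ with no finite-arclength blow-up.

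Next I would establish the two structural facts. For the unique closest point, set $V=r^{2}$; at any critical point one has $Ax=By$, and there $V''=2(A^{2}+B^{2})(1+x^{2}/B)>0$ when $B>0$ (and $V''=2A^{2}>0$ at $x=0$ when $B=0$), so every critical point is a strict local minimum and hence there is exactly one. For the escape to infinity I would note that the system has no equilibria (since $A\neq0$ and $B\ge0$ make $xy+A=0=x^{2}+B$ inconsistent) and that $y'=-x^{2}-B\le0$ is decreasing and never eventually constant; by Bendixson's criterion, or directly from the monotonicity of $y$, there is no periodic orbit, so Poincaré--Bendixson forbids a bounded trajectory. Therefore $y\to-\infty$ as $s\to+\infty$ and $y\to+\infty$ as $s\to-\infty$, whence $r^{2}\ge y^{2}\to\infty$ on both ends: the curve consists of exactly two arms leaving the unique closest point and running to infinity, with $r$ strictly monotone on each arm.

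The asymptotic claims I would get by balancing the system. For $B>0$ one has $y\sim-Bs$, and the quasi-static balance of $x'=xy+A$ gives $x=\kappa\sim A/(Bs)\to0$; for $B=0$ one finds $y^{3}\sim-3A^{2}s$ and $x\sim-A/y\to0$. In both cases $\kappa\to0$ (curvature decay), $\int|\kappa|\,ds\asymp\int s^{-1}\,ds=\infty$ (resp. $\int s^{-1/3}\,ds=\infty$), giving infinite total curvature, and $\theta(s)=\int_{0}^{s}\kappa\to+\infty$ on both arms, giving infinite winding. Feeding $x\to0$, $y\to\mp\infty$ into $X/T=(A+iB)(x+iy)/(A^{2}+B^{2})$ yields $T\sim\frac{B+iA}{Bs}X$, i.e.\ the limiting growth direction is $(B+iA)$ times the location. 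The family is then parametrized by the value of $r/\sqrt{A^{2}+B^{2}}$ at the crossing of the line $Ax=By$, which ranges over $[0,\infty)$ (the value $0$ corresponding to the trajectory through the phase-plane origin). Finally, the evolution statements are immediate from the explicit choices fixed earlier, $g(t)=\sqrt{2Bt+1}$ and $f(t)=\frac{A}{2B}\log(2Bt+1)$ (resp.\ $f(t)=At$ when $B=0$).

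The main obstacle is embeddedness. Strict monotonicity of $r$ on each arm already makes each arm injective, so the real content is that the two arms never meet. Writing each arm beyond the closest radius as a polar graph $\arg X=\alpha_{i}(R)$, an intersection would mean $\alpha_{1}(R)-\alpha_{2}(R)\in2\pi\mathbb{Z}$ for some $R>R_{\ast}$. The plan is to study $h(R):=\alpha_{1}(R)-\alpha_{2}(R)$, which satisfies $h(R_\ast)=0$; using $\frac{d}{ds}\arg X=-\frac{Ay+Bx}{x^{2}+y^{2}}$ together with the asymptotics above, one checks that $h$ has a finite limit as $R\to\infty$ (the leading $\frac{A}{B}\log R$ winding cancels between the two arms), and I would aim to show $0<h(R)<2\pi$ throughout, so that $h$ never returns to a multiple of $2\pi$. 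Controlling this angular separation uniformly, rather than only asymptotically, is the delicate step and is where I expect the argument to require the most care; this is precisely the part Halldorsson carries out in detail in [10].
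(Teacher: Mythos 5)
The paper does not actually prove this theorem: it is one of the four classification statements quoted verbatim from Halldorsson, and the text explicitly says these are ``present[ed] without proof'' with the reader directed to [10]. So there is no in-paper argument to compare against; what can be said is that your phase-plane strategy is exactly the one Halldorsson uses, and your sketch is essentially sound. The computations you lean on check out: $|X|^{2}=(x^{2}+y^{2})/(A^{2}+B^{2})$, $\frac{d}{ds}(x^2+y^2)=2(Ax-By)$, the second-derivative test $V''=2(A^{2}+B^{2})(1+x^{2}/B)$ on the nullcline $Ax=By$ (and $2A^{2}$ when $B=0$), the nonexistence of equilibria, and the formula $\frac{d}{ds}\arg X=-\frac{Ay+Bx}{x^{2}+y^{2}}$, which is what actually delivers the ``infinitely many circles around the origin'' claim (note that $\theta=\int\kappa\to\infty$ alone measures turning of the tangent, not winding of the position, so it is good that you have both).

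Two places where the sketch is genuinely incomplete. First, the asymptotics $x\sim A/(Bs)$ (resp.\ $y^{3}\sim-3A^{2}s$ when $B=0$) are obtained by ``quasi-static balance,'' which is a heuristic; to make it a proof you need a trapping argument showing the trajectory is absorbed into a neighborhood of the nullcline $x=-A/y$ (e.g.\ if $x\ge\epsilon>0$ and $y\to-\infty$ then $x'=xy+A\to-\infty$, and symmetrically for $x\le-\epsilon$), after which the stated rates follow. Without $x\to0$ established first, the claims about curvature decay, infinite total curvature, and the limiting direction $B+iA$ all dangle. Second, as you yourself flag, embeddedness is the real content of the theorem and your proposal only names the quantity $h(R)=\alpha_{1}(R)-\alpha_{2}(R)$ to be controlled without controlling it; showing $0<h<2\pi$ uniformly (not just asymptotically, where the $\frac{A}{B}\log R$ windings cancel) is the step that requires the comparison arguments of [10]. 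As a proof proposal this is an honest and well-aimed reduction, but it is not yet a complete proof of the embeddedness assertion.
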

	
	\begin{theorem}[$ A\neq0 $ and $ B< 0$]
		In this case there are two types of curves.
		1) Curves such that the limiting behavior when going along the curve in each	direction is wrapping around the circle of radius $\frac{1}{\sqrt{-B}} $, clockwise if $ A < 0 $ and
		counterclockwise if $ A > 0 $. These curves form a one-dimensional family.
		
		2) Curves such that the curvature never changes sign and the two ends behave very differently. One end wraps around the circle with radius $ \frac{1}{\sqrt{-B}} $
		in its limiting
		behavior, clockwise if $ A < 0 $ and counterclockwise if $ A > 0 $. The other end spirals infinitely many circles around the origin out to infinity and has infinite total curvature. The curvature goes to 0 along it, and its limiting growing direction is
		$ -B - iA  $ times the location. There is at least one curve of this type, and we call it the comet spiral.
	
		These curves rotate and shrink with angular function $\frac{A}{2B}\log(2Bt+1) $ and scaling
		function $ \sqrt{2Bt + 1} $ under the CSF. A singularity forms at time $ t = -\frac{1}{2B} $. The curves of type 1 are bounded, so they disappear into the origin.
	\end{theorem}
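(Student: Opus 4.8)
The plan is to read the entire statement off the global phase portrait of the planar autonomous system $x' = xy + A$, $y' = -x^2 - B$ together with the reconstruction $X = e^{i\theta(s)}(x+iy)/(A-iB)$, $\theta(s)=\int_0^s\kappa$, both of which were set up above. What makes everything tractable is the dictionary between the phase plane and the geometry: since $x=\kappa$ and, inverting the linear relation that defines $(x,y)$, one has $|X|^2 = (x^2+y^2)/(A^2+B^2)$, the distance of the curve from the origin is a monotone function of the phase radius, the curvature is literally the first coordinate, and $\theta'=x$ measures the winding of the tangent. Thus ``wraps around a circle,'' ``spirals out to infinity,'' ``$\kappa\to 0$,'' and ``infinite total curvature'' all translate into statements about limits and integrals along orbits, while the final sentence about the CSF evolution is separate and elementary.

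First I would carry out the local analysis. Setting $x'=y'=0$ with $B<0$ gives exactly two equilibria $P_\pm=(\pm\sqrt{-B},\,\mp A/\sqrt{-B})$; since $x=\kappa$ is constant there, these are precisely the circles of radius $1/\sqrt{-B}$, traversed counterclockwise at $P_+$ (where $\kappa=+\sqrt{-B}$) and clockwise at $P_-$. The Jacobian $\left(\begin{smallmatrix} y & x \\ -2x & 0\end{smallmatrix}\right)$ has determinant $2x^2$ and trace $y$, so at the equilibria the determinant equals $2|B|>0$ and the trace is $\mp A/\sqrt{-B}$: for $A\neq 0$ both are hyperbolic, one a sink and the other a source, the roles (and hence the orientation singled out in each part of the statement, once one accounts for the reversal of orientation when the curve is traversed toward its source end) swapping with the sign of $A$. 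Whether each point is a node or a focus is governed by the sign of $A^2-8B^2$, but this affects only how the approach to the circle oscillates, not the classification.

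The heart of the proof is the global structure. I would first exclude periodic orbits: the Dulac multiplier $\mu=1/x$ turns the divergence into $-A/x^2$, strictly signed on each half-plane $\{x>0\}$ and $\{x<0\}$, while on the line $x=0$ one has $x'=A\neq 0$, so the flow is transverse and one-directional there; hence no closed orbit can lie in a half-plane or cross $x=0$, and there are no limit cycles. With this in hand, Poincar\'e--Bendixson forces every bounded orbit to run from the source to the sink, and a Poincar\'e compactification shows that the only escape to infinity is along $x\to 0$, $y\to\pm\infty$; there the balance $xy\to -A$ gives $x\sim -A/(|B|s)$, so $\kappa\to 0$ while $\theta=\int\kappa\,ds\sim -(A/|B|)\log s\to\pm\infty$ and $\int|\kappa|\,ds$ diverges, and $T=X'$ becomes asymptotically parallel to $-(B+iA)X$ --- exactly the stated winding, infinite total curvature, and growing direction $-B-iA$ times the location. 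Reading off the orbit types then yields the two families: the heteroclinic connections $P_-\to P_+$ are the bounded, both-ends-on-the-circle curves of part (1), forming a one-parameter family because the orbit space of the two-dimensional flow is one-dimensional; the separatrix orbits joining one equilibrium to infinity are the comet spirals of part (2), which stay in a single half-plane $x\gtrless 0$ so that $\kappa$ keeps its sign, and whose existence (``at least one'') follows from an intermediate-value/shooting argument on the departure angle at the source, separating orbits that fall into the sink from those that escape.

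Finally, the evolution assertions are immediate from the self-similar ansatz: with $B<0$ the scaling function $g(t)=\sqrt{2Bt+1}$ decreases to $0$ as $t\uparrow -\tfrac{1}{2B}$ and the angular function is $\tfrac{A}{2B}\log(2Bt+1)$, so the curves rotate and shrink, a singularity forms at $t=-\tfrac{1}{2B}$, and the bounded type-(1) curves are swept into the origin. The main obstacle is exactly the global connection analysis of the third step --- proving there are no periodic orbits, that the bounded orbits are precisely the source-to-sink heteroclinics, and that at least one separatrix escapes to infinity with the claimed fine asymptotics; the Dulac function $\mu=1/x$ together with the compactification at infinity is the linchpin that makes this manageable, whereas the local spectral computation and the evolution formulas are routine.
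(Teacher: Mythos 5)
The paper itself offers no proof of this theorem: it is quoted from Halldorsson and the text explicitly states that these classification theorems are ``presented without proof,'' deferring to [10]. So there is nothing internal to compare your argument against; what I can say is that your proposal follows essentially the same strategy as the cited source, namely a complete phase-plane analysis of $x'=xy+A$, $y'=-x^{2}-B$ combined with the dictionary $\kappa=x$, $|X|^{2}=(x^{2}+y^{2})/(A^{2}+B^{2})$, $\theta'=x$. Your local computations check out: the equilibria $(\pm\sqrt{-B},\mp A/\sqrt{-B})$ do correspond to the circle of radius $1/\sqrt{-B}$, the Jacobian has determinant $2x^{2}$ and trace $y$, giving one hyperbolic sink and one source whose roles swap with the sign of $A$; the Dulac multiplier $1/x$ giving divergence $-A/x^{2}$, together with transversality of the flow across $\{x=0\}$ (where $x'=A\neq0$), correctly rules out periodic orbits; and the balance $xy\to-A$ along escaping orbits yields $\kappa\to0$, logarithmically divergent total curvature, and the limiting direction $-B-iA$ times the position, exactly as claimed. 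Two places would need to be fleshed out to make this a complete proof: the existence of the comet spiral (your shooting argument on the separatrices of the source is the right idea, but one must actually exhibit an orbit confined to a single half-plane $\{x\gtrless0\}$ that connects an equilibrium to infinity, which in [10] requires a genuine monotonicity/trapping argument rather than a one-line appeal to the intermediate value theorem), and the assertion that each family is one-dimensional, which should be justified by an explicit parametrization (e.g.\ by distance to the origin or by a transversal section) rather than by the informal remark that the orbit space is one-dimensional. The final paragraph on the CSF evolution is, as you say, immediate from $g(t)=\sqrt{2Bt+1}$ and $f(t)=\frac{A}{2B}\log(2Bt+1)$ with $B<0$.
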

	\begin{theorem}[$ A=0 $ and $ B< 0$]
		Each of the curves is contained in an annulus around the origin
		and consists of a series of identical excursions between the two boundaries
		of the annulus. The curvature is an increasing function of the radius and never changes sign. The inner and outer radii of the annulus, $ r_{min} $ and $ r_{max} $, satisfy
		$r_{min}\exp(Br_{min}^{2}/2)=r_{max}\exp(Br_{max}^{2}/2) $ and take on every value in $ (0, \frac{1}{\sqrt{-B}}] $ and $ [\frac{1}{\sqrt{-B}},\infty) $,
		respectively.
		
		The curves form a one-dimensional family parametrized by $ r_{min} $ and are divided
		into two sets:
		
		1) Closed curves, i.e., immersed $ \mathbb{S}^{1} $	(Abresch-Langer curves ([12])). In addition to	the circle, there is a curve with rotation number p which touches each boundary of	the annulus $ q $ times for each pair of mutually prime positive integers $ p, q $ such that $ \frac{1}{2}<\frac{p}{q} $.
		
		2) Curves whose image is dense in the annulus.
		
		Under the CSF these curves shrink with scaling function $g(t) =\sqrt{2Bt + 1} $ until
		they disappear into the origin at time $ t = -\frac{1}{2B}$.
	\end{theorem}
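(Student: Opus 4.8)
The plan is to reduce everything to the planar autonomous system obtained by setting $A=0$ in the curvature ODE, namely $x'=xy$ and $y'=-x^{2}-B$ with $B<0$, where $x=\kappa$ is the curvature and $s$ is arclength. First I would record the two dictionary entries coming from the definitions of $x$ and $y$ with $A=0$: $\langle X,N\rangle=x/B$ and $\langle X,T\rangle=-y/B$, so that the squared distance to the origin is $r^{2}=|X|^{2}=\langle X,T\rangle^{2}+\langle X,N\rangle^{2}=(x^{2}+y^{2})/B^{2}$. The engine of the whole argument is a conserved quantity: a direct computation shows that \[E(x,y)=\tfrac12\big(x^{2}+y^{2}\big)+B\log|x|\] satisfies $\frac{d}{ds}E=(x+B/x)(xy)+y(-x^{2}-B)=0$ along the flow. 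Since the line $\{x=0\}$ is invariant (as $x'=xy$), any trajectory with $x>0$ stays there, so the curvature never changes sign, and I work in $x>0$ henceforth. Linearizing at the unique equilibrium $(\sqrt{-B},0)$ gives purely imaginary eigenvalues, and together with the conserved $E$ this makes it a genuine center; the equilibrium itself is the constant-curvature solution, i.e. the round circle of radius $1/\sqrt{-B}$. Every other level set of $E$ in $x>0$ is a closed orbit encircling this center, so each self-similar curve corresponds to a periodic solution in $s$.

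Next I would extract the annulus and the stated radius relation. Differentiating the radius gives $\frac{d}{ds}r^{2}=(2/B^{2})(xx'+yy')=-2y/B$, which vanishes exactly when $y=0$; thus the radius is extremal precisely where the orbit meets the $x$-axis, at the two points $x_{-}<\sqrt{-B}<x_{+}$, giving $r_{min}=x_{-}/(-B)$ and $r_{max}=x_{+}/(-B)$. Evaluating the conserved quantity at these two crossings (where $y=0$) and substituting $x_{\pm}=-Br_{\pm}$ collapses $E(x_{-},0)=E(x_{+},0)$ into \[r_{min}\exp(Br_{min}^{2}/2)=r_{max}\exp(Br_{max}^{2}/2),\] which is the claimed relation. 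Studying $h(r)=r\exp(Br^{2}/2)$ shows $h'(r)=\exp(Br^{2}/2)(1+Br^{2})$ changes sign only at $r=1/\sqrt{-B}$, so $h$ increases then decreases with a maximum there; hence $h(r_{min})=h(r_{max})$ forces $r_{min}\in(0,1/\sqrt{-B}]$ and $r_{max}\in[1/\sqrt{-B},\infty)$, and the common value of $h$ (equivalently $E$, equivalently $r_{min}$) parametrizes a one-dimensional family. Finally, rewriting $E=\tfrac12 B^{2}r^{2}+B\log x$ and solving $x=\exp\big((E-\tfrac12 B^{2}r^{2})/B\big)$ exhibits the curvature as a single-valued, strictly increasing function of $r$ on each orbit (since $B<0$), which is the monotonicity claim.

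The heart of the classification, and the step I expect to be the main obstacle, is the closed-versus-dense dichotomy. Since the solution is $S$-periodic in arclength, $r(s)$ oscillates with all excursions between the boundaries congruent (translation by $S$ in $s$ rotates the curve rigidly about the origin), so I would track the angle $\phi=\arg X$. A computation gives $\phi'=Bx/(x^{2}+y^{2})$, which is strictly negative on $x>0$, so the position vector winds monotonically; the net angle swept per period is $\Delta\phi=\int_{0}^{S}Bx/(x^{2}+y^{2})\,ds$, and I would define the rotation number $\omega=|\Delta\phi|/2\pi$. The difficult analytic work is to show that $\omega$ is a continuous (in fact monotone) function of the orbit parameter $r_{min}$ and to pin down its range: the limit as the orbit shrinks to the center, using the linearized frequency $\sqrt{2(-B)}$, gives $\omega\to 1/\sqrt2$, while the degenerate large-orbit limit gives $\omega\to 1/2$, so $\omega$ takes values in $(\tfrac12,\tfrac1{\sqrt2})$. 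A curve then closes after exactly $q$ periods, winding $p$ times and touching each boundary $q$ times, precisely when $\omega=p/q$ is rational with $p,q$ coprime; when $\omega$ is irrational the curve is invariant under an irrational rotation about the origin, so by equidistribution its closure fills the annulus densely. This period/angle integral analysis, including the monotonicity of $\omega$ and the endpoint computations, is the delicate part, and is exactly where I would lean on Halldorsson's ([10]) and the Abresch--Langer ([12]) arguments.

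It remains only to read off the CSF dynamics, which is immediate from the self-similar ansatz (38): with $A=0$ the angular function $f$ vanishes and the scaling function is $g(t)=\sqrt{2Bt+1}$, so the curve shrinks homothetically with no rotation, and, because $B<0$, $g(t)\to 0$ as $t\to -1/(2B)$, at which time the whole annulus collapses into the origin.
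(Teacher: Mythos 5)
The paper itself gives no proof of this theorem --- it is stated verbatim from Halldorsson with the explicit remark that the proofs are omitted and the reader is directed to [10] --- so there is no in-paper argument to compare against. Your outline is essentially the correct one, and it is the same route taken in [10] and in Abresch--Langer [12]: the conserved quantity $E=\tfrac12(x^{2}+y^{2})+B\log|x|$ for the system $x'=xy$, $y'=-x^{2}-B$ does make every orbit in $\{x>0\}$ a closed curve about the center $(\sqrt{-B},0)$ (the circle of radius $1/\sqrt{-B}$), your computation $\tfrac{d}{ds}r^{2}=-2y/B$ correctly locates the radius extrema at the axis crossings, and evaluating $E$ there does collapse to $r_{\min}e^{Br_{\min}^{2}/2}=r_{\max}e^{Br_{\max}^{2}/2}$ with the stated ranges via the unimodality of $h(r)=re^{Br^{2}/2}$; the closed/dense dichotomy via the rotation number is likewise the standard argument. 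Two remarks. First, a sign slip: with the paper's conventions one gets $\phi'=-Bx/(x^{2}+y^{2})>0$ (consistent with $\Delta\phi=\int_{0}^{S}\kappa\,ds>0$), not negative --- harmless since you only use $|\Delta\phi|$. Second, the one place where your write-up is genuinely incomplete is exactly the place you flag: the continuity and strict monotonicity of the rotation number $\omega$ in $r_{\min}$ and the two endpoint limits $\omega\to 1/\sqrt{2}$ (linearization at the center) and $\omega\to 1/2$ (degenerate orbit) constitute the entire analytic content of the Abresch--Langer classification, and deferring them to [10] and [12] means the dichotomy into cases (1) and (2), and in particular the existence of a closed curve for each coprime pair with $\tfrac12<\tfrac{p}{q}<\tfrac{1}{\sqrt2}$, is asserted rather than proved. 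Note also that your range $(\tfrac12,\tfrac1{\sqrt2})$ recovers the upper bound $p/q<1/\sqrt{2}$ that the theorem as printed in the paper appears to have dropped.
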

	\begin{theorem}[$ A=0 $ and $ B \geq 0 $]
		Each of the curves is convex, properly embedded and asymptotic to
		the boundary of a cone with vertex at the origin. It is the graph of an even function.
		
		The curves form a one-dimensional family parametrized by their distance to the
		origin, which can take on any value in $ [0, \infty) $.
		
		Under the CSF these curves expand forever as governed by the scaling function $ g(t) = \sqrt{2Bt + 1} $.
	\end{theorem}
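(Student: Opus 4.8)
The plan is to specialize the governing ODE system $x' = xy + A$, $y' = -x^2 - B$ (in which $x = \kappa$ and $' = d/ds$) to $A = 0$, obtaining
\begin{align*}
    x' &= xy, \\
    y' &= -x^2 - B,
\end{align*}
and then to read every claimed feature off a complete description of this planar phase portrait. The first thing I would do is produce an explicit first integral: dividing the two equations gives $y\,dy = (-x - B/x)\,dx$, which integrates to
\[
    H(x,y) = x^2 + y^2 + 2B\log|x|,
\]
and one verifies $dH/ds = 2xx' + 2yy' + 2Bx'/x = 0$ directly. Thus every orbit lies on a level set of $H$; since $H$ is even in $x$ it suffices to study $x > 0$, the line $x = 0$ being the invariant straight-line solution $\kappa \equiv 0$. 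Throughout I take $B > 0$ (for $B = 0$ the constraint $B\langle X,N\rangle = \kappa$ forces $\kappa \equiv 0$, the static straight line).

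Next I would describe the orbits. For $B > 0$ there are no equilibria, and $y' = -x^2 - B < 0$ shows $y$ decreases monotonically along every orbit. Writing $\phi(x) = x^2 + 2B\log x$, which increases strictly from $-\infty$ to $+\infty$ on $(0,\infty)$, the level set $\{H = c,\ x > 0\}$ is the graph $y^2 = c - \phi(x)$ over $x \in (0, x_{\max}]$ with $\phi(x_{\max}) = c$; it runs from $(0^+, +\infty)$ through the vertex $(x_{\max}, 0)$ down to $(0^+, -\infty)$. Since $\kappa = x > 0$ everywhere, the curve is convex. Placing $s = 0$ at the vertex, the pair $(x,y)$ with $x$ even and $y$ odd solves the system with the data $y(0) = 0$, so by uniqueness the orbit is symmetric; then $\theta(s) = \int_0^s \kappa$ is odd and, from $X = e^{i\theta}(x+iy)/(-iB)$, a short computation gives $X(-s) = -\overline{X(s)}$. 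Hence the curve is invariant under reflection in the imaginary axis and is the graph of an even function.

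From here the parametrization and the dynamical statement fall out quickly. Along an orbit $|X|^2 = (x^2 + y^2)/B^2 = (c - 2B\log x)/B^2$, which is minimized exactly at the vertex, where it equals $(x_{\max}/B)^2$; letting $x_{\max}$ range over $(0,\infty)$ (together with the degenerate vertex at the origin) exhibits the solutions as a one-parameter family whose distance to the origin sweeps out all of $[0,\infty)$. The expansion claim is then immediate from the self-similar ansatz $\hat X = g(t)e^{if(t)}X + H(t)$: with $A = 0$ one has $f \equiv 0$ and $g(t) = \sqrt{2Bt + 1}$, so the evolution is a pure dilation growing without bound.

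The hard part will be the asymptotic-to-a-cone statement, since everything else is either algebraic or a symmetry argument. I must show that each arm ($s \to \pm\infty$, where $x \to 0^+$ and $y \to \pm\infty$) straightens and limits onto a genuine ray. Using $ds = dx/(xy)$, the total curvature of an arm is $\int \kappa\,ds = \int dx/y$, and because $y \sim \sqrt{-2B\log x}$ as $x \to 0^+$ this integral converges (and it also converges at the vertex end); hence $\theta(s)$ has finite limits, the unit tangent converges, and the two arms approach two rays symmetric about the imaginary axis, whose complement is the asserted cone. The delicate step is upgrading convergence of directions to convergence of the curve to a fixed cone boundary --- controlling the transverse offset of each arm from its limiting ray --- together with confirming properness (from $|X| \to \infty$) and the absence of self-intersections, which follows from the graph-of-even-function property combined with $\kappa > 0$.
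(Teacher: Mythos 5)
The paper does not actually prove this statement: Theorems 2.2.2--2.2.5 are explicitly ``present[ed] without proof'' with a pointer to Halldorsson [10], so there is no in-paper argument to compare yours against. Judged on its own terms, your proposal is sound and follows the route the paper sets up (and the one [10] actually takes): specialize the system $x'=xy+A$, $y'=-x^2-B$ to $A=0$ and read the geometry off the phase portrait. Your first integral $H=x^2+y^2+2B\log|x|$ is correct and is, after exponentiating, the same conserved quantity $\kappa\,e^{Br^2/2}$ (using $x^2+y^2=B^2|X|^2$ and $x=\kappa$) that the paper quotes in the shrinker case of Theorem 2.2.4, so it is clearly the ``right'' invariant. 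The monotonicity of $y$, the shape of the level sets, convexity from $\kappa=x>0$, the reflection symmetry $X(-s)=-\overline{X(s)}$, and the parametrization by $|X|_{\min}=x_{\max}/B\in[0,\infty)$ are all correct.

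Two remarks on the step you flag as delicate. First, it does close with the estimate you have already set up: $\theta_\infty-\theta(s)=\int_0^{x(s)}dx'/y(x')$ together with $y\sim\sqrt{-2B\log x}$ gives $|y(s)|\,|\theta(s)-\theta_\infty|=O\bigl(x(s)\bigr)\to 0$, and since $X=e^{i\theta}(x+iy)/(-iB)$ the transverse offset of the arm from the ray $\{re^{i\theta_\infty}:r\ge 0\}$ is bounded by $x/B+|y|\,|\theta-\theta_\infty|/B\to 0$; properness follows from $|X|\to\infty$. Second, the one point you pass over too quickly is the claim ``graph of an even function'': reflection symmetry alone does not give a graph. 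You need the total turning of each arm, $\int_0^{x_{\max}}dx/\sqrt{c-\phi(x)}$, to be strictly less than $\pi/2$, so that the tangent direction never becomes parallel to the axis of symmetry; equivalently, the cone has positive opening angle. This is true but requires an explicit bound on that integral (or an embeddedness argument for convex curves asymptotic to two distinct rays), and it is the only substantive gap in an otherwise complete and correct plan.
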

	\section{The Vortex Filament Equation}
	The Vortex Filament Equation (VFE) \[ \partial_{t}\gamma = \partial_{s}\gamma\times\partial_{ss}\gamma, \] where $ s $ is the arc-length, arises in the consideration of vortices with infinitesimal thickness of size and whose effects at infinity can be ignored. It has applications in the description on the shape of vortices and the interaction of vortex lines and has applications in aerodynamics as well as high energy quantum fluids. Here we present a derivation of the equation by the Local Induction Principle as given by Arms and Hama ([14]).
	
	Hama and Arms first consider the Biot-Savart law:\begin{equation}
		d\mathbf{q}_{ij} = -\frac{k}{4\pi}\mathbf{r}^{-3}_{ij}\frac{\partial \mathbf{r}_{ij}}{\partial s_{i}}\times \mathbf{r}_{ij}ds_{j}.
	\end{equation}
	Here $ k $ is a scalar and is the strength of the vortex, $ d\mathbf{q}_{ij} $ is the induced velocity at the point $ r_{i} $ by the vortex segment $ ds_{j} $ at the point $ \mathbf{r}_{i} $, and $ \mathbf{r}_{ij} $ is the vector distance between the points $ \mathbf{r}_{i} $ and $ \mathbf{r}_{j} $.
	
	Begin by expanding through a Taylor Series the vector \[  \mathbf{r}_{ij}(\zeta,t) = \mathbf{r}_{i}(s_{i},t) -\mathbf{r}_{j}(s_{i}+\zeta,t). \] Assuming that $ \zeta $ is small then the expression becomes:\begin{equation}
		\mathbf{r}_{ij}(\zeta) = \mathbf{a}_{1}\zeta+\mathbf{a}_{2}\zeta^{2}+...
	\end{equation}
	where the substitutions \begin{equation}
		\mathbf{a}_{1} = \partial_{\zeta}\mathbf{r}_{ij},
		\quad\mathbf{a}_{2}= \partial^{2}_{\zeta}\mathbf{r}_{ij},\text{... at }\zeta = 0
	\end{equation}
	are made.
	
	This gives that $ \partial\mathbf{r}_{ij}/\partial s_{i} = \partial\mathbf{r}_{ij}/\partial\zeta = \mathbf{a}_{1}+2\mathbf{a}_{2}\zeta+... $ and \begin{align}
		-\partial\mathbf{r}_{ij}/\partial s_{i}\times\mathbf{r}_{ij} &= (\mathbf{a}_{1}\zeta+\mathbf{a}_{2}\zeta^{2}+...)\times(\mathbf{a}_{1}+2\mathbf{a}_{2}\zeta+...)\\
	&=(\mathbf{a}_{1}\times\mathbf{a}_{2})\zeta^{2}+O(\zeta^{3})\\
	&= (\mathbf{a}_{1}\times\mathbf{a}_{2})|\zeta|^{2}.
	\end{align}
	
	Similarly to find the value of the distance vector, \begin{align}
		|\mathbf{r}_{ij}|^{2} &= |(\mathbf{a}_{1}\zeta+\mathbf{a}_{2}\zeta^{2}+...)^{2}|\\
		&= |\mathbf{a}_{1}|^{2}|\zeta|^{2}+2\mathbf{a}_{1}\cdot\mathbf{a}_{2}\zeta^{3}+...\\
		r_{ij} &= |\mathbf{a}_{1}||\zeta|\Big(1+2\frac{\mathbf{a}_{1}\cdot\mathbf{a}_{2}}{|\mathbf{a}_{1}|^{2}}\zeta+... \Big)
	\end{align}
	so that we can then take the exponent and expand the binomial to obtain \[ r_{ij}^{-3} = |\mathbf{a}_{1}|^{-3}|\zeta|^{-3}\Big(1-3\frac{\mathbf{a}_{1}\cdot\mathbf{a}_{2}}{|\mathbf{a}_{1}|^{2}}\zeta+... \Big). \]
	
	Finally, this gives the expression \begin{equation}
		\mathbf{q}_{ij} = \frac{k}{4\pi}\int\Big[ \frac{\mathbf{a}_{1}\times\mathbf{a}_{2}}{|\mathbf{a}_{1}|^{3}}\frac{1}{|\zeta|}+O(1)\Big]d\zeta.
	\end{equation}
	
	Arms and Hama then consider integration over the limit $ \epsilon\leq|\zeta|<1 $, one obtains that \[\mathbf{q}_{ij} =\frac{k}{2\pi} \frac{\mathbf{a}_{1}\times\mathbf{a}_{2}}{|\mathbf{a}_{1}|^{3}}\frac{1}{|\epsilon|}+O(1)\Big] \] and by re-substituting $ \mathbf{a}_{n} = \frac{1}{n!}\partial^{n}\mathbf{r}_{ij}/\partial\zeta^{n} $ they obtain:
	\begin{equation}
		\frac{4\pi}{k}\mathbf{q}_{ij} = \frac{(\partial\mathbf{r}/\partial s)_{i}\times(\partial^{2}\mathbf{r}/\partial s^{2})_{i}}{|(\partial\mathbf{r}/\partial s)_{i}|^{3}}\log(\frac{1}{\epsilon})+O(1)
	\end{equation}
	
	Then considering the infinitesimal limit for $ \epsilon<<1 $ and ignoring the terms $ O(1) $, whch is equivalent to ignoring long distance effects, we may write the previous expression as \begin{equation}
		\frac{\partial\mathbf{r}}{\partial t} = \frac{(\partial\mathbf{r}/\partial s)\times(\partial^{2}\mathbf{r}/\partial s^{2})}{|(\partial\mathbf{r}/\partial s)|^{3}}
	\end{equation}
	
	The high nonlinearity of this equation makes for explicit solutions to be hard to find. We then look for self-similar solutions to understand the behavior of vortices under this equation.
	\subsection{Properties of Vortex Filament Equation}
	
		We present in this section some of the basic properties of VFE and conclude it with a proof that the only self-similar translating solutions that lie in a plane for all time are circles moving in the binormal direction.
	\begin{lemma}[Arc-Length Commutator]
		Let $\gamma$ be a curve evolving by the Vortex Filament Equation. Then it satisfies the following commutator relation, where $x$ is arclength:
		\begin{equation}
		\Large{\Big[\frac{\partial}{\partial t}, \frac{\partial}{\partial s}\Big]}
		\normalsize :=
		\Large \frac{\partial^{2}}{\partial t \partial s}-\frac{\partial^{2}}{\partial s \partial t} \normalsize =0.
		\end{equation}
		In other words,
		\begin{align}
		\gamma_{ts}=\gamma_{st}
		\end{align}
	\end{lemma}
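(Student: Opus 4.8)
The plan is to recognize that this commutator is genuinely nontrivial, precisely because $s$ is arclength and arclength itself depends on time: as the curve moves, its arclength reparametrization changes, so $\partial_s$ is \emph{not} a partial derivative with respect to a fixed independent coordinate, and there is no a priori reason for it to commute with $\partial_t$. The strategy is therefore to pass to a fixed material parameter $x$ (the original parameter, independent of $t$), write $\partial_s = v^{-1}\partial_x$ where $v := |\gamma_x|$ is the arclength density, and reduce the entire statement to showing that $v$ is constant in time.

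First I would compute the commutator acting on an arbitrary smooth function $f(x,t)$. Since $x$ and $t$ are independent coordinates, $\partial_x\partial_t = \partial_t\partial_x$ holds trivially, and a direct computation gives
\[
[\partial_t, \partial_s]f = \partial_t\big(v^{-1}\partial_x f\big) - v^{-1}\partial_x\big(\partial_t f\big) = -\frac{v_t}{v}\,\partial_s f .
\]
Thus the commutator vanishes identically if and only if $\partial_t v = 0$; this isolates the whole content of the lemma in the single claim that the Vortex Filament Equation preserves the arclength parametrization.

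Next I would compute $\partial_t v$ directly. Differentiating $v = \langle \gamma_x, \gamma_x\rangle^{1/2}$ and using $\gamma_{xt} = \gamma_{tx} = \partial_x W$, where $W := \partial_t\gamma$ is the flow velocity, together with $\gamma_x = vT$ and $\partial_x = v\,\partial_s$, yields $\partial_t v = v\,\langle \partial_s W, T\rangle$. Everything then comes down to verifying that the velocity's $s$-derivative has no tangential component. For VFE we have $W = \gamma_s \times \gamma_{ss}$, which is automatically perpendicular to $\gamma_s = T$; writing $\langle \partial_s W, T\rangle = \partial_s\langle W, T\rangle - \langle W, \partial_s T\rangle$ and using the Frenet relation $\partial_s T = \kappa N$ together with $W = \kappa B$ (so that $W \perp T$ and $W \perp N$), both terms vanish, giving $\partial_t v = 0$.

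The main obstacle is conceptual rather than computational: it lies in recognizing at the outset why the statement is not automatic, since the subtlety is due entirely to the time-dependence of the arclength gauge. Once the reduction to $\partial_t v = 0$ is made, the remaining work is only the short Frenet computation showing that the binormal velocity field produces no tangential stretching. I would close by noting that $\partial_t v = 0$ expresses that VFE is an arclength-preserving flow, which is exactly what licenses the Frenet-frame derivations in the sequel to treat $\partial_t$ and $\partial_s$ as commuting operators.
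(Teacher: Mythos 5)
Your proof is correct and follows essentially the same route as the paper: both reduce $\partial_s$ to a fixed time-independent parameter, identify the obstruction as the time-derivative of the arclength density, and kill it using the Frenet relations together with the fact that the VFE velocity $\kappa B$ is orthogonal to $T$ and $N$. The only (welcome) difference is that you establish the vanishing of the commutator as an operator identity on arbitrary functions, whereas the paper verifies it only when applied to $\gamma$ itself; your formulation is the one actually invoked later when the commutator is applied to $T$, $N$, and $B$.
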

	
	\begin{proof}
		Using $\frac{\partial}{\partial s}=\lvert\frac{\partial\gamma}{\partial u}\rvert^{-1}\frac{\partial}{\partial u}$, where $u$ is an arbitrary parameter, we compute
		\begin{align}
		\gamma_{st} &= (\lvert\gamma_{u}\rvert^{-1}\gamma_{u})_{t}\\
		&=-\lvert\gamma_{u}\rvert^{-3}\langle\gamma_{tu},\gamma_{u}\rangle\gamma_{u}+\lvert\gamma_{u}\rvert^{-1}\gamma_{tu}\\
		&=-\lvert\gamma_{u}\rvert^{-3}\langle\kappa B_{u},\gamma_{u}\rangle\gamma_{u}+\gamma_{ts}\\
		&=\kappa\langle B_{s},T\rangle\gamma_{s}+\gamma_{ts}\\
		&=\gamma_{ts} ,
		\end{align}
		because $B_{s}=-\tau N$.$\qed$
	\end{proof}
		
	\begin{corollary}[Evolution of the Normal Vector, Curvature and Torsion]
		Let $\gamma$ be a curve evolving under Binormal Flow, $N$ be its corresponding normal vector and $ F(s,t) = \tau^{2}-\frac{\kappa_{ss}}{\kappa} $. Then the evolution of the normal vector satisfies $\partial_{t}N = \tau\kappa T-F(s,t)B$, the evolution for curvature satisfies $  \kappa_{t} = -(2\kappa_{s}\tau+\tau_{s}\kappa)$, and the evolution equation for torsion satisfies $\partial_{t}\tau = -\kappa\kappa_{s}+F_{s}(s,t)$.	
	\end{corollary}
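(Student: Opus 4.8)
The plan is to reduce everything to a single input---the binormal flow written in the Frenet frame---and then extract all three evolution equations from the orthonormality of $\{T,N,B\}$ together with the commutator lemma. First I would rewrite the flow itself: since $\partial_s\gamma = T$ and $\partial_{ss}\gamma = T_s = \kappa N$, the Vortex Filament Equation becomes $\partial_t\gamma = T\times\kappa N = \kappa B$. This is the only place the specific form of the equation enters; from here on the argument is pure frame bookkeeping.

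Next I would compute the time derivative of the unit tangent. Using the relation $\gamma_{ts}=\gamma_{st}$ from the Arc-Length Commutator lemma (which, as its proof shows, holds as an operator identity because arclength is preserved), I have $\partial_t T = \partial_t\partial_s\gamma = \partial_s\partial_t\gamma = \partial_s(\kappa B)$, and the Frenet equation $B_s = -\tau N$ gives $\partial_t T = -\kappa\tau N + \kappa_s B$. Because $\{T,N,B\}$ is an orthonormal frame, $\partial_t$ acts on it by a skew-symmetric matrix, so the three frame derivatives are governed by just three scalar coefficients. Two of these I already know from $\partial_t T$; differentiating the orthogonality relations $\langle N,T\rangle=0$ and $\langle N,B\rangle=0$ in $t$ then forces $\partial_t N = \kappa\tau\, T + c\,B$ and $\partial_t B = -\kappa_s T - c\,N$ for a single unknown function $c$.

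The heart of the proof is pinning down $c$ together with $\kappa_t$ and $\tau_t$, and for this the only extra information available is the compatibility of mixed partials, i.e.\ the vanishing commutator applied to the Frenet equations themselves. I would differentiate $T_s = \kappa N$ in time and rewrite $T_{st}=T_{ts}=\partial_s(\partial_t T)$; expanding $\partial_s(-\kappa\tau N + \kappa_s B)$ with the substitutions $N_s=-\kappa T+\tau B$ and $B_s=-\tau N$, and matching the $N$- and $B$-components, yields simultaneously the curvature equation $\kappa_t = -(2\kappa_s\tau + \kappa\tau_s)$ and the identification $c = \kappa_{ss}/\kappa - \tau^2 = -F$, so that $\partial_t N = \tau\kappa\, T - F B$. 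Finally, applying the same cross-differentiation to $N_s = -\kappa T + \tau B$ (equivalently to $B_s=-\tau N$) and matching the last independent component delivers the torsion equation $\partial_t\tau = -\kappa\kappa_s + F_s$.

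I expect the main obstacle to be precisely this last bookkeeping step rather than any conceptual difficulty: orthonormality alone constrains the frame evolution to a skew matrix but says nothing about how the scalars $\kappa,\tau$ evolve, so all of the real content is squeezed out of the commutator identity. The repeated substitution of the three Frenet relations inside $\partial_s(\partial_t N)$ produces several cancelling terms, and it is very easy to drop a sign on the $\tau$-contributions coming from $N_s$ and $B_s$. Keeping the orientation convention $B = T\times N$ fixed throughout, and cross-checking each matched component against the skew-symmetry of the frame's time derivative, is what makes the signs in the final curvature and torsion equations come out consistently.
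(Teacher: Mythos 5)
Your route is genuinely different from the paper's, and for the first two claims it works. The paper does not organize the computation around skew-symmetry of the frame's time derivative: it writes $N = \gamma_{ss}/\kappa$, expands $\gamma_{sst} = (\kappa B)_{ss}$ term by term, reads off all three components of $N_t$ at once, and extracts $\kappa_t$ from the requirement that the $N$-component vanish; the torsion equation is then obtained from $\partial_t(\partial_s B) = \partial_s(\partial_t B)$. You instead get the $T$- and $B$-components of $N_t$ from orthonormality and pin down $\kappa_t$ and $c = \langle N_t,B\rangle$ by matching components in $T_{st}=T_{ts}$. That step checks out: $\partial_s(-\kappa\tau N + \kappa_s B) = \kappa^2\tau\, T - (2\kappa_s\tau + \kappa\tau_s)N + (\kappa_{ss}-\kappa\tau^2)B$, which against $\kappa_t N + \kappa N_t$ gives $\kappa_t = -(2\kappa_s\tau+\kappa\tau_s)$ and $c = \kappa_{ss}/\kappa - \tau^2 = -F$, hence $N_t = \tau\kappa T - FB$. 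This is the classical derivation of the Da Rios intrinsic equations, and it is arguably cleaner than the paper's since it never divides by $\kappa$.

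The gap is in your last step. Carrying out the cross-differentiation you describe with the paper's conventions $B_s = -\tau N$, $N_s = -\kappa T + \tau B$: the left side is $\partial_t(B_s) = -\tau_t N - \tau N_t = -\kappa\tau^2 T - \tau_t N + \tau F B$, the right side is $\partial_s(B_t) = \partial_s(-\kappa_s T + FN) = (-\kappa_{ss}-\kappa F)T + (F_s - \kappa\kappa_s)N + \tau F B$, and matching $N$-components gives $\tau_t = \kappa\kappa_s - F_s$ --- the opposite sign to what you (and the corollary) assert. The same answer comes from $N_{st}=N_{ts}$. So $\partial_t\tau = -\kappa\kappa_s + F_s$ does not follow from the computation you outline; to land on it you would have to drop a sign. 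In fact the paper's own proof commits exactly this slip: it writes $\partial_t(\tau N) = \partial_t(\partial_s B)$ even though $\partial_s B = -\tau N$, so the stated torsion equation is off by an overall sign. The correct Da Rios equation, which is also what the Hasimoto/NLS correspondence proved later in the paper yields, is $\tau_t = \kappa\kappa_s + \partial_s\bigl(\kappa_{ss}/\kappa - \tau^2\bigr) = \kappa\kappa_s - F_s$; your method, executed carefully, actually detects the error rather than reproducing the statement.
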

	
	\begin{proof}
		We have that $\lVert N\rVert^{2} = 1$ so then $\partial_{t}(\lVert N\rVert^{2}) = 2\langle N_{t},N\rangle = 0$. We can therefore separate $\partial_{t}N$ into tangential and binormal components. We now define $F(x,t) = \langle N,B_{t}\rangle$ and calculate
		\begin{align}
			\frac{d}{dt}\langle N,T\rangle &= \langle N_{t},T\rangle+\langle N,T_{t}\rangle=0\\
			\langle N_{t},T\rangle &=-\langle N,T_{t}\rangle=\tau\kappa\\
			\langle N_{t},B\rangle &=-\langle N,B_{t}\rangle = -F(s,t).
		\end{align}
		Since by the definition of VFE, $T_{t}=\partial_{s}(\kappa B) = \kappa_{s}B-\tau\kappa N$. Using that $\langle N,T\rangle = \langle N,B\rangle = 0$, we compute
		\begin{align}
			N_{t} &= \Big[\frac{\partial_{ss}\gamma}{\kappa}\Big]_{t}\\
			&=-\frac{\kappa_{t}}{\kappa^{2}}\gamma_{ss}+\frac{1}{\kappa}\gamma_{sst}.
		\end{align}
		We now calculate $ \gamma_{sst} $ as follows:\begin{align}
			\partial_{s}\gamma &= \frac{1}{|\gamma_{x}|}\gamma_{x}\\
			\partial_{ss}\gamma &= \frac{1}{|\gamma_{x}|}\partial_{x}(|\gamma_{x}|^{-1}\gamma_{x})\\
			&= \frac{\gamma_{xx}}{|\gamma_{x}|^{2}}\\
			\partial_{sst}\gamma &=\partial_{t}(\frac{\gamma_{xx}}{|\gamma_{x}|^{2}})\\
			&= -2|\gamma_{x}|^{-3}\langle\gamma_{xt},\gamma_{x}\rangle\gamma_{xx}+ \frac{\gamma_{xxt}}{|\gamma_{x}|^{2}}\\
			&= \frac{\gamma_{txx}}{|\gamma_{x}|^{2}} = (\kappa B)_{ss} = \kappa_{ss}B+2\kappa_{s}B_{s}+\kappa B_{ss}\\
			&= \kappa_{ss}B-2\kappa_{s}\tau N+\kappa(-\tau_{s}N+\kappa\tau T-\tau^{2}B).
		\end{align} 
		
		Combining our results we get that \begin{align}
			N_{t} &= -\frac{\kappa_{t}}{\kappa}N+\frac{\kappa_{ss}}{\kappa}B-2\frac{\kappa_{s}\tau}{\kappa}N-\tau_{s}N+\kappa\tau T-\tau^{2}B.\\
			&= -\Big(\frac{\kappa_{t}}{\kappa}+2\frac{\kappa_{s}\tau}{\kappa}+\tau_{s}\Big)N+\tau\kappa T+\Big(\frac{\kappa_{ss}}{\kappa}-\tau^{2}\Big)B.
		\end{align}
		From this we can conclude that $ \kappa_{t} = -(2\kappa_{s}\tau+\tau_{s}\kappa) $ since $ \langle N_{t},N\rangle  = 0$, and that $ F(s,t) = \tau^{2}-\frac{\kappa_{ss}}{\kappa} $.
		
		 Our final expression for the evolution of the normal vector is then \begin{equation}
			N_{t} = \tau\kappa T+\Big(\frac{\kappa_{ss}}{\kappa}-\tau^{2}\Big)B.
		\end{equation}
		
		To compute the evolution equation for torsion we first compute the evolution for the Binormal Vector:
		\begin{align}
			B &= T\times N\\
			B_{t}&=T_{t}\times N+T\times N_{t}\\
			&=-\kappa_{s}T+F(s,t)N = -\kappa_{s}T+\Big(\tau^{2}-\frac{\kappa_{ss}}{\kappa}\Big)N.
		\end{align}
		Continuing, we calculate
		\begin{align}
			\partial_{t}(\tau N) &=\partial_{t}(\partial_{s}B)\\
			&= \partial_{s}(\partial_{t}B)\\
			&= \partial_{s}\Big[-\kappa_{s}T+F(s,t)N\Big]\\
			&= -\kappa_{ss}T-\kappa_{s}\kappa N+F_{s}(s,t)N-\kappa F(s,t) T+\tau F(s,t) B\\
			&= \Big[-\kappa_{ss}-\kappa F(s,t)\Big]T - \Big[\kappa_{s}\kappa+F_{s}(s,t)\Big]N+\tau F(s,t)B.
		\end{align}
		Thus since $\partial_{t}(\tau N) = \partial_{t}\tau N+\tau N_{t}$ we have that \[ \partial_{t}\tau= -\kappa\kappa_{s}+2\tau\tau_{s}-\frac{\kappa_{sss}}{\kappa}+\frac{\kappa_{ss}\kappa_{s}}{\kappa^{2}}. \]
	\end{proof}
	
	\begin{lemma}[Planar Translating Self-similar Solutions]
		The only solutions of VFE that lie in a translating plane are the line and the circle, with the latter traveling in the binormal direction.
		
	\end{lemma}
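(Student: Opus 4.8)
The plan is to encode the hypothesis analytically and then differentiate it against the flow. Saying that $\gamma(\cdot,t)$ lies in a translating plane means there is a fixed unit vector $n$ (the common normal of all the planes) together with a scalar function $c(t)$ such that
\[ \langle \gamma(s,t), n\rangle = c(t) \qquad \text{for all } s,t, \]
since any translate of a plane with normal $n$ is again a level set of $\langle \cdot, n\rangle$. All the geometric content of the statement can be extracted by differentiating this single identity in $s$ and in $t$ and combining it with the VFE in the raw form $\gamma_t = \gamma_s\times\gamma_{ss}$.

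First I would differentiate the constraint in arclength. One differentiation gives $\langle \gamma_s, n\rangle = 0$ and a second gives $\langle \gamma_{ss}, n\rangle = 0$, so both $\gamma_s = T$ and $\gamma_{ss}$ are orthogonal to $n$, i.e.\ they lie in the plane. Their cross product $\gamma_t = \gamma_s\times\gamma_{ss}$ is therefore parallel to $n$; since $|\gamma_s|=1$ and $\gamma_{ss}\perp\gamma_s$ in the arclength parametrization, the length of this cross product is exactly $\kappa$. This identifies the binormal $B=\pm n$ as the constant plane normal and records $\tau\equiv 0$, the curve being planar.

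Next I would differentiate the constraint in time. This yields $\langle \gamma_t, n\rangle = c'(t)$, and feeding in $\gamma_t = \pm\kappa\, n$ from the previous step gives $\kappa = \pm c'(t)$, a quantity that does not depend on $s$. Hence the curvature is constant along the curve at each instant. To see that it is also constant in time I would invoke the evolution equation $\kappa_t = -(2\kappa_s\tau + \tau_s\kappa)$ from the preceding Corollary, which collapses to $\kappa_t = 0$ once $\tau\equiv 0$. Thus $\kappa$ is a genuine constant, and consequently $c(t)$ is affine in $t$: the plane slides at constant speed.

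It then remains to classify. A planar curve of constant curvature is a straight line when $\kappa = 0$ and a circle of radius $1/\kappa$ when $\kappa\neq 0$, by the fundamental theorem for plane curves. In the first case $\gamma_t = \kappa B = 0$, so the line is stationary; in the second $\gamma_t = \kappa B = \kappa n$ is a fixed vector, so the circle translates rigidly in the binormal direction with speed $\kappa$, exactly as claimed. The one point that needs care is the behaviour where $\kappa$ might vanish, since there the Frenet normal is undefined; I expect this to be the only real obstacle, and it is dissolved by working with the intrinsic identity $\langle \gamma, n\rangle = c(t)$ and the cross-product form of the VFE rather than with $\kappa B$ directly, because once $\kappa$ is known to be independent of $s$ the dichotomy $\kappa\equiv 0$ versus $\kappa$ nowhere zero is automatic and the frame degeneracy never actually occurs on a circle.
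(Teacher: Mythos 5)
Your proof is correct, and it takes a genuinely different route from the paper's. The paper writes $\gamma=(f,g,h)$ in coordinates with the curve initially in the $xy$-plane, expands $\gamma_s\times\gamma_{ss}$ componentwise, uses $\kappa_t=0$ to integrate the third component to $h(s,t)=\kappa(s,0)\,t$, then forces $f_t=g_t=0$ from $\lVert\gamma_t\rVert=\kappa$, and finally extracts $\kappa_s=0$ from the requirement that the unit-speed identity $w'^2+z'^2+\kappa_s^2t^2=1$ hold for all $t$. You instead encode the hypothesis invariantly as $\langle\gamma(s,t),n\rangle=c(t)$ with $n$ fixed, and two $s$-derivatives plus one $t$-derivative immediately give that $\gamma_t=\gamma_s\times\gamma_{ss}$ is normal to the plane with $\langle\gamma_t,n\rangle=c'(t)$, hence $\kappa=|c'(t)|$ is $s$-independent at a stroke; the classification then follows from the fundamental theorem of plane curves. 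Your version is shorter, coordinate-free, exploits the ``translating plane'' hypothesis more directly (the paper's proof assumes $\tau\equiv0$ and a fixed initial plane and then deduces the translation), and is more careful about the degeneracy at $\kappa=0$, which the paper glosses over. One small refinement: your appeal to the evolution equation $\kappa_t=-(2\kappa_s\tau+\tau_s\kappa)$ to get time-constancy presupposes the Frenet frame where $\kappa\neq0$; you could avoid it entirely by noting that $\gamma_t=c'(t)\,n$ is purely normal to the plane, so the orthogonal projection of the curve onto the plane is independent of $t$ and the curvature is automatically time-independent. Both proofs otherwise rest on the same two pillars, the cross-product form of the VFE and the planarity of $T$ and $\gamma_{ss}$.
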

	\begin{proof}
		Take $\gamma = (f(s,t),g(s,t),h(s,t))$ to be a space curve in $\mathbb{R}^{3}$ parametrized by arclength and having torsion 0.
		
		Then $\partial_{t}\gamma = (\partial_{t}f(s,t),\partial_{t}g(s,t),\partial_{t}h(s,t))$, $\partial_{s}\gamma = (\partial_{s}f(s,t),\partial_{s}g(s,t),\partial_{s}h(s,t))$, and $\partial_{ss}\gamma = ( \partial_{ss}f(s,t),\partial_{ss}g(s,t),\partial_{ss}h(s,t))$.
		
		Using the definition of VFE equation we calculate that
		\begin{align}
		\partial_{t}\gamma&= \kappa B\\	
		&= \partial_{s}\gamma\times\partial_{ss}\gamma \\
		&= ( g_{s}h_{ss}-g_{ss}h_{s},f_{ss}h_{s}-f_{s}h_{ss}, f_{s}g_{ss}-f_{ss}g_{s})
		\end{align}
		
		Since curvature is invariant under rotation and the solutions of the curve are invariant under translations, it suffices to solve for equations lying in the x-y plane of our chosen coordinate system.
		
		Also the Frenet Equations under the initial conditions become	
		\begin{align}
		\partial_{s}T &= \kappa N\\
		\partial_{s}N &= -\kappa T\\
		\partial_{s}B &= 0		
		\end{align}
		From (104) we can conclude that $ B(s,t) =B(0,t) $. Similarly $\kappa(x,t) = (f_{s}g_{ss}-g_{ss}f_{s})(s,t)$ since the curve lies on the x-y plane, and by Corollary 3.1.2 we have that $ \kappa_{t} = 0 $. Hence  \[ \kappa(s,t) = \kappa(s,0) = (f_{s}g_{ss}-g_{ss}f_{s})(s,0). \]
		
		Then, since for all $ t $ we have that \[ \partial _{t}h(s,t) = (f_{s}g_{ss}-g_{s}f_{ss})(s,t)=\kappa(s,t), \] the previous argument gives that $ \partial_{t}h(s,t) = \kappa(s,0) $. Integrating this last expression gives us that \[ h(s,t) = \kappa(s,0)t. \]
		
		So then since $\lVert B(t)\rVert = 1$ we have that $\lVert\partial_{t}\gamma(s,t)\rVert = \lvert\kappa^{2}\rvert$, and this can be rewritten as $f_{t}^{2}+g_{t}^{2}+h_{t}^{2} =f_{t}^{2}+g_{t}^{2}+\kappa^{2}= \kappa^{2}$ and this gives that $f_{t}^{2} = g_{t}^{2} = 0$.
		
		This implies that $f(s,t) = w(s)$ and $g(s,t) = z(s)$ completely determine the solutions to the binormal flow equation with $\tau = 0$ so long as $w(x)$ and $z(s)$ satisfy	
		\begin{align}		
		w'(s)^{2}+z'(s)^{2}+\kappa_{s}^{2}t^{2} &= 1
		\end{align}
		for all $s$ and $t$, where $\kappa = w'z''-z'w''$, due to the parametrization by arc length. Since this must hold for all $t \in[0,\infty)$ it forces $\kappa_{s} = 0$
		
		Putting this all together we have that the equations $w(s)$ and $z(s)$ determine a solution to the binormal equation with $\tau = 0$ so long as they solve the system of differential equations:
		\begin{align}
		w'(s)^{2}+z'(s)^{2} &= 1\\
		w'z''-z'w'' &= C
		\end{align}
		where $C$ is a constant.
		
		The only solutions of these equations depend on the value assigned to $C$. If $C$ is 0, then it follows that $w(s) = as$ and $z(s) = \sqrt{1-a^{2}}s$, and if $C\neq0$ then the solutions are circles with a radius of $1/C$ since $\tau = 0$ and curvature is constant.$\qed$
	\end{proof}
	\subsection{The Hasimoto Transform and Hasimoto's Explicit Solution}
	We now present the results obtained by Hasimoto on his paper on VFE and his explicit solution for a traveling wave soliton ([15]). The most striking part of his paper is the Hasimoto Transform which manages to turn VFE into a form that satisfies a Nonlinear Cubic Schroedinger Partial Differential Equation (NLCSE). This then allows one to find solutions to VFE by finding solutions to the fully integrable NLCSE. It was through this process that Hasimoto found one of the first explicit soliton solutions to VFE. We now present the results from his paper.
	
	\begin{theorem}[Hasimoto Transform $ \text{[15]} $]
		Let $ \Gamma:\mathbb{R}^{3}\times\mathbb{R}\rightarrow\mathbb{R}^{3} $ be a differentiable curve parametrized by arclength of curvature $ \kappa $ and torsion $ \tau $ that satisfies the Vortex Filament Equation. If we take the transformation $ \psi = \kappa \exp\Big(i\int_{0}^{s}\tau ds\Big) $ then $ \psi $ will satisfy the following NLCSE:\[ \frac{1}{i}\partial_{t}\psi = \partial_{ss}\psi+\frac{1}{2}(|\psi|^{2}+A)\psi. \]
	\end{theorem}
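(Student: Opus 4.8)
The plan is to treat the Hasimoto transform as an explicit change of unknown and to verify the Schr\"odinger equation by direct differentiation, leaning on the evolution equations for $\kappa$ and $\tau$ established in Corollary 3.1.2. Write $\phi(s,t) = \int_0^s \tau(s',t)\,ds'$, so that $\psi = \kappa e^{i\phi}$, with $\phi_s = \tau$ and $\phi(0,t)=0$ for every $t$. The whole argument reduces to computing $\psi_{ss}$ and $\psi_t$, substituting the known evolution laws, and checking that the real and imaginary parts of the claimed identity hold separately after cancelling the common factor $e^{i\phi}$.

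First I would compute the spatial derivatives. Since $\phi_s=\tau$, one obtains $\psi_s = (\kappa_s + i\kappa\tau)e^{i\phi}$ and, differentiating once more, $\psi_{ss} = \big[\kappa_{ss} - \kappa\tau^2 + i(2\kappa_s\tau + \kappa\tau_s)\big]e^{i\phi}$. For the time derivative, $\psi_t = (\kappa_t + i\kappa\phi_t)e^{i\phi}$, where $\phi_t = \int_0^s \tau_t\,ds'$ because $\phi$ is a definite integral of the torsion. Multiplying by $1/i = -i$ gives $\tfrac{1}{i}\psi_t = (\kappa\phi_t - i\kappa_t)e^{i\phi}$.

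Next I would match the two sides of the claimed NLCSE term by term. The imaginary part reads $-\kappa_t = 2\kappa_s\tau + \kappa\tau_s$, which is exactly the curvature evolution $\kappa_t = -(2\kappa_s\tau + \tau_s\kappa)$ from Corollary 3.1.2; thus the imaginary part is automatically satisfied and functions only as a consistency check. The real part reads $\kappa\phi_t = \kappa_{ss} - \kappa\tau^2 + \tfrac12(\kappa^2 + A)\kappa$, that is $\phi_t = \tfrac{\kappa_{ss}}{\kappa} - \tau^2 + \tfrac12\kappa^2 + \tfrac{A}{2}$. Recognising the quantity $F = \tau^2 - \kappa_{ss}/\kappa$ from the same corollary, this is the identity $\phi_t = \tfrac12\kappa^2 - F + \tfrac{A}{2}$.

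The remaining and genuinely load-bearing step is to verify this identity for $\phi_t$, and this is where I expect the main difficulty. Differentiating the target in $s$ converts it into a statement about $\tau_t = \partial_s\phi_t$, which must be reconciled with the torsion evolution $\partial_t\tau = -\kappa\kappa_s + F_s$; integrating $\tau_t$ from $0$ to $s$ then recovers $\phi_t$ up to its value at $s=0$. The subtle point is that this integration produces an $s$-independent term, namely the boundary contribution at $s=0$, a function of $t$ alone, and it is precisely this term that is absorbed into the constant $A$. Here one exploits the gauge freedom of the construction: the Frenet phase determines $\psi$ only up to a global time-dependent phase $e^{i\beta(t)}$, and replacing $\psi$ by $\psi\, e^{\frac{i}{2}\int_0^t A(t')\,dt'}$ leaves $|\psi| = \kappa$ untouched while normalising $A$ to a constant. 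Tracking the sign conventions of the Frenet frame carefully throughout this step, so that the signs of the $\kappa^2$ and $F$ contributions emerge consistently with the stated evolution equations, is the part of the argument most prone to error and deserves the most care.
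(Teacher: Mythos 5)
Your route is genuinely different from the paper's: the paper follows Hasimoto's original argument, introducing the complex frame $\mathfrak{N}=(N+iB)e^{i\int_0^s\tau\,ds}$, expanding $\mathfrak{N}_t=\alpha\mathfrak{N}+\beta\bar{\mathfrak{N}}+\gamma T$, and extracting the NLCSE from the compatibility condition $\mathfrak{N}_{st}=\mathfrak{N}_{ts}$, which \emph{derives} the coefficient $R=\tfrac12(|\psi|^2+A)$ rather than assuming the intrinsic evolution laws. You instead verify the equation by brute-force differentiation of $\psi=\kappa e^{i\phi}$, taking the curvature and torsion evolution as known input. Your computations of $\psi_s$, $\psi_{ss}$, $\psi_t$ are correct, the imaginary part does reduce exactly to $\kappa_t=-(2\kappa_s\tau+\kappa\tau_s)$, and your observation that the constant of integration at $s=0$ is a function of $t$ absorbed into $A$ by a global phase is exactly right (this is why $A$ appears at all).

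The gap is in the load-bearing real-part step. You need $\phi_t=\tfrac12\kappa^2-F+\tfrac{A}{2}$, hence $\tau_t=\partial_s\phi_t=\kappa\kappa_s-F_s$. But Corollary 3.1.2 as stated gives $\partial_t\tau=-\kappa\kappa_s+F_s$, the exact \emph{negative}; integrating that instead yields $\phi_t=F-\tfrac12\kappa^2+c(t)$, and the real parts of the two sides of the NLCSE then differ by a sign that no choice of $A$ can repair, so the verification does not close as written. The resolution is that the corollary's torsion equation carries a sign error: its proof writes $\partial_t(\tau N)=\partial_t(\partial_s B)$, whereas the Frenet convention used throughout ($B_s=-\tau N$) forces $\partial_t(\tau N)=-\partial_s(\partial_t B)$, which flips the result to the correct Da Rios form $\tau_t=\kappa\kappa_s-F_s=\partial_s\bigl(\tfrac12\kappa^2+\tfrac{\kappa_{ss}}{\kappa}-\tau^2\bigr)$. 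You anticipated that sign conventions were the danger point, but "reconciling" with the corollary is not enough — you must actually rederive (or correct) the torsion evolution before your argument is complete. With that fix, your proof goes through and is a clean, more elementary alternative to the frame-compatibility derivation, at the cost of requiring the full intrinsic evolution equations up front.
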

		
	\begin{proof}
		The Frenet system of coordinates gives the following equations \begin{align}
			\Gamma_{s} &= T\\
			T_{s} &= \kappa N\\
			N_{s} &= \tau B	-\kappa T\\
			B_{s} &= -\tau N.	
		\end{align}
		
		Combining the last two equations in the following form gives \begin{equation}
			(N+iB)_{s}=-i\tau(N+iB)-\kappa T
		\end{equation}
		which leads to the introduction of the new variables $ \mathfrak{N} $ and $ \psi $ defined as \begin{align}
		\mathfrak{N} &= (N+iB)\exp\Big(i\int_{0}^{s}\tau ds\Big)\\
		\psi &= \kappa\exp\Big(i\int_{0}^{s}\tau ds\Big).
		\end{align}
		
		From the Frenet equations we obtain the following expressions, where the last one follows from the defintion of VFE and use of (66) and (67): \begin{align}
			\mathfrak{N}_{s} &= -\psi T\\
			T_{s}&= \text{Re}\big[\psi\mathfrak{N}\big] = \frac{1}{2}\Big(\bar{\psi}\mathfrak{N}+\psi\bar{\mathfrak{N}}\Big)\\
			T_{t} &= \text{Re}\big[i\psi'\bar{\mathfrak{N}} \big] = \frac{1}{2}i\big(\psi'\bar{\mathfrak{N}}-\bar{\psi}'\mathfrak{N}\big).
		\end{align}
		 
		The following relations shows that this creates an orthogonal system of equations \begin{equation*}
			T\cdot T = 1,\qquad\mathfrak{N}\cdot\bar{\mathfrak{N}} = 2,\qquad\mathfrak{N}\cdot\mathfrak{N} = 0,\qquad \mathfrak{N}\cdot T = 0,\quad \text{etc.}
		\end{equation*}
		
		Now we derive the evolution equation for $ \mathfrak{N} $ by expressing it in this new orthogonal system as\begin{equation}
			\mathfrak{N}_{t} = \alpha\mathfrak{N}+\beta\bar{\mathfrak{N}}+\gamma T.
		\end{equation}
		
		We now determine the values of the coefficients as follows \begin{align}
			\alpha+\bar{\alpha} &=\frac{1}{2}\big(\bar{\mathfrak{N}}_{t}\cdot\mathfrak{N}+\mathfrak{N}\cdot\bar{\mathfrak{N}}_{t}\big)= \frac{1}{2}\partial_{t}(\bar{\mathfrak{N}}\cdot\mathfrak{N}) = 0,\qquad \alpha = iR\\
			\beta &= \frac{1}{2}\mathfrak{N}\cdot\bar{\mathfrak{N}} =\frac{1}{4}\partial_{t}\big(\mathfrak{N}\cdot\mathfrak{N}\big) = 0\\
			\gamma&=-\mathfrak{N}\cdot T_{t} = -i\psi
		\end{align}
		where $ R $ is a real function. This then simplifies to \begin{equation}
			\mathfrak{N}_{t} = i\big(R\mathfrak{N}-\psi T\big).
		\end{equation}
		
		We now take the time derivative of (68) and the arclength derivative of (75) obtaining \begin{align}
			\mathfrak{N}_{st} &=-\psi T-\psi T_{t}=-\psi T-\frac{1}{2}i\psi\big(\psi_{s}\bar{\mathfrak{N}}-\bar{\psi}_{s}\mathfrak{N}\big)\\
			\mathfrak{N}_{ts} &= i\big[R_{s}\mathfrak{N}-R\psi T-\psi_{ss}T-\frac{1}{2}\psi_{s}\big(\bar{\psi}\mathfrak{N}+\psi\bar{\mathfrak{N}}\big)\big].
		\end{align}
		
		We can now equate the coefficients of $ T $ and $ i\mathfrak{N} $ giving us \begin{align}
			-\psi = -i\big(\psi_{ss}+R\psi\big)\\
			\frac{1}{2}|\psi|^{2} = R_{s}-\frac{1}{2}\psi_{s}\bar{\psi}.
		\end{align}
	Solving this last equation for $ R $ gives that $ R = \frac{1}{2}\big(|\psi|^{2}+A\big) $ which reduces the first equation down to \begin{equation}
		\frac{1}{i}\partial_{t}\psi = \partial_{ss}\psi+\frac{1}{2}(|\psi|^{2}+A)\psi.
	\end{equation}
	\end{proof}
	
	\begin{theorem}[Hasimoto's Traveling Wave  $ \text{[15]} $]
		Consider a soliton solution to NLCSE defined by Theorem 3.2.1 such that $ \kappa = 0 $ as $ s\rightarrow\infty $. This then gives as a solution a traveling wave with a kink that becomes a line at infinity.
	\end{theorem}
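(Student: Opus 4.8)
The plan is to produce the soliton directly as a traveling-wave solution of the NLCSE from Theorem 3.2.1 and then transport that solution back to the filament through the inverse of the Hasimoto transform. First I would remove the constant $A$ by the gauge change $\psi \mapsto \psi\,e^{iAt/2}$, which leaves the equation in the standard focusing cubic form $\tfrac{1}{i}\psi_t = \psi_{ss} + \tfrac12|\psi|^2\psi$; since $|\psi|=\kappa$ is unchanged, this costs nothing geometrically. Recalling that $\psi = \kappa\exp\big(i\int_0^s\tau\,ds\big)$, a filament of constant torsion $\tau\equiv c$ corresponds to a wavefunction carrying a linear spatial phase $e^{ics}$, so the natural ansatz is
\[ \psi(s,t) = \phi(s-vt)\,e^{i(cs-\omega t)}, \]
with $\phi$ a real, even profile and $c,v,\omega$ real parameters to be determined.

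Substituting this ansatz and separating real and imaginary parts is the second step. The imaginary part should collapse to an algebraic relation fixing the wave speed in terms of the phase gradient (the group-velocity condition $v=2c$), while the real part reduces to an autonomous second-order ODE for the profile,
\[ \phi'' = \mu\,\phi - \tfrac12\phi^{3}, \qquad \mu = c^{2}-\omega. \]
Imposing the boundary condition $\kappa=\phi\to 0$ as $s\to\pm\infty$ forces $\mu>0$, and the first integral $\tfrac12(\phi')^{2} = \tfrac12\mu\phi^{2} - \tfrac18\phi^{4} + E$ together with decay forces $E=0$; the unique localized solution is then the soliton $\phi(\xi) = 2\sqrt{\mu}\,\sech(\sqrt{\mu}\,\xi)$. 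This identifies the curvature as a single $\sech$ bump traveling at speed $v$ and confirms that the torsion is the constant $c$.

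The final step is to reconstruct the space curve from the data $(\kappa,\tau) = \big(2\sqrt{\mu}\,\sech(\sqrt{\mu}(s-vt)),\,c\big)$ and read off its geometry. With $\tau$ constant I would integrate the Frenet system to recover $T$, then $\gamma$, and verify using the evolution equations of Corollary 3.1.2 that the profile is carried rigidly: because $\kappa$ depends on $s$ and $t$ only through $s-vt$ and $\tau$ is constant, the filament evolves by a screw motion (a translation along a fixed axis composed with a rotation) without change of shape, which is exactly the assertion that it is a traveling-wave soliton. Since $\kappa\to 0$ at both ends, the curve straightens to a line in each asymptotic direction, so the solution is a localized kink joining two straight asymptotes — the traveling wave that becomes a line at infinity.

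I expect the main obstacle to be this last reconstruction. Solving the profile ODE is routine once the $\sech$ is recognized, but integrating the Frenet–Serret equations explicitly with a non-constant curvature, and then proving that the resulting motion under the binormal flow is a pure rigid screw motion with the claimed straight-line asymptotics, is the delicate part; it is where the geometric content of Hasimoto's result — that an NLS soliton corresponds to a genuine traveling filament rather than merely a solution of an abstract PDE — actually has to be verified.
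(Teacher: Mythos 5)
Your reduction of the NLCSE to the profile equation is the same computation as the paper's, just organized differently: the paper keeps $\tau$ as an unknown function of $\zeta=s-ct$ and extracts $\tau\equiv\tau_0=c/2$ from the imaginary part using the decay of $\kappa$, whereas you posit the linear phase $e^{ics}$ (constant torsion) up front and let the imaginary part fix the speed $v=2c$; after your gauge change the real part gives the same profile $\kappa=2\nu\sech(\nu(s-vt))$ with $\nu=\sqrt{\mu}$, your $\omega$ playing the role of the paper's constant $A=2(\tau_0^2-\nu^2)$. The genuine divergence is in the reconstruction of the filament, precisely the step you defer. The paper carries it out explicitly: it derives a third-order linear ODE for the binormal $B$ in the variable $\eta=\nu\zeta$, observes that $\mathbf{C}=dB/d\eta+\tanh(\eta)B$ satisfies a solvable second-order equation with a $\sech^2$ potential whose solutions are $(1-S^2\mp 2iS\tanh\eta)e^{\pm iS\eta}$, and backs out closed-form expressions for $\Gamma$, $T$, $N$, $B$, from which the kink shape and the straight asymptotes are read off directly. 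You instead propose a soft rigidity argument: since $(\kappa,\tau)$ depend on $(s,t)$ only through $s-vt$ with $\tau$ constant, the curve at time $t$ is congruent to the curve at time $0$, and since $\kappa$ is integrable the tangent has limits at both ends, giving straight asymptotes. That route does suffice for the qualitative statement as phrased, and is shorter; but be aware of what each approach buys and costs. Congruence for each fixed $t$ only produces some family of rigid motions $R(t)$, and you still need to use the flow itself (smoothness in $t$ plus the autonomy of the evolution) to show $R(t)$ is a one-parameter screw group with a fixed axis rather than an arbitrary time-dependent congruence; and the soft argument never produces Hasimoto's explicit parametrization, which is the content the paper actually records. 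So your plan is correct and analytically identical up to the profile, but the step you rightly flag as the obstacle is exactly where the paper's one nontrivial device lives, and a complete write-up needs either that device or a carefully finished version of your rigidity argument.
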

	\begin{proof}
	 First we introduce the new variable $ \zeta = s-ct $ where $ c $ can be taken to be the 'speed' of the translation. This variable is introduced in order to be able to work with a soliton of the Schroedinger equation derived in the previous theorem. Using this variable then gives \begin{equation}
	 	\psi = \kappa(\zeta)\exp\Big[i\int_{0}^{s}\tau(\zeta)ds\Big].
	 \end{equation}
	 
	 We plug this variable into the Schroedinger equation yielding the following real and imaginary parts, respectively, \begin{align}
	 	\-c\kappa[\tau(\zeta)(-ct)] &= \kappa''-\kappa\tau^{2}+\frac{1}{2}(\kappa^{2}+A)\kappa\\
	 	c\kappa' &= 2\kappa'\tau+\kappa\tau'
	 \end{align}
	 and integrating the last equation gives \begin{equation}
	 	(c-2\tau)\kappa^{3} = 0
	 \end{equation}
	 where we have used our curvature limit to determine the constant of integration. By (129) we then have \[
	 \tau =\tau_{0}= \frac{1}{2}c = \text{constant}
	 \]
	 assuming that curvature is not identically zero. Using this we can integrate (130)
	 \[ \kappa = 2\nu \sech(\nu\zeta) \]
	 so long as $ A $ is a constant determined by $ A = 2(\tau_{0}^{2}-\nu^{2}) $. Now that the torsion and curvature are determined we can get the actual shape of the filament by substituting these into our original Frenet frame of reference and solving the following equation for the binormal vector:\begin{align}
	 	\tau_{0}(T'-\kappa N) = \big[(1/\kappa)\big(B''+\tau_{0}^{2}B\big)\big]'+\kappa B' &= 0\\
	 	\frac{d^{3}}{d\eta^{3}}B+\tanh(\eta)\frac{d^{2}}{d\eta^{2}}B+\big(S^{2}+\sech^{2}(\eta)\big)\frac{d}{d\eta}B+S^{2}\tanh(\eta)B &= 0
	 \end{align}
	 where we have made the substitutions $ \eta = \nu\zeta $ and $ S = \frac{\tau_{0}}{\nu} $.
	 
	 A solution to this equation can then be obtained if we note that \[ \mathbf{C} = \frac{dB}{d\eta}+\tanh(\eta)B \] is a solution of the equation \[ d^{2}\mathbf{C}/d\eta^{2}+\big(S^{2}+2\sech^{2}(\eta)\big)\mathbf{C} = 0 \] which has as solutions \begin{equation}
	 	\quad(1-S^{2}\mp2iS\tanh(\eta))e^{\pm iS\eta}.
	 \end{equation}
	 
	 This finally gives us the binormal vector\begin{equation}
	 	B = \sech(\eta),\quad(1-S^{2}\mp2iS\tanh(\eta))e^{\pm iS\eta}.
	 \end{equation}
	 which we can then substitute into our equations of the Frenet Frame and using the assumption, without loss of generality, that the filament is parallel to the x-axis at infinity: \begin{align}
		 T_{x}\rightarrow1\quad\text{as}\quad\eta\rightarrow\infty\\
		 N_{y}+iN_{z} = -i\big(B_{y}+iB_{z}\big) = e^{i(\tau_{0}\zeta+\sigma(t))}
	 \end{align}
	 Here $ \sigma(t) $ is a real function of $ t $ and the subscripts denote the component of the vector in the x, y, or z axis.
	 
	 Finally straightforward calculation gives the final expressions 
	 \begin{align}
	 	\begin{cases}
	 	\Gamma:& x = s-\frac{2\mu}{\nu}\tanh(\eta), \quad y+iz = re^{i\Theta},\\
	 	T:& T_{x} = 1-2\mu\sech^{2}(\eta),\quad T_{y}+iT_{z} =-\nu r(\tanh(\eta)-iS)e^{i\Theta}\\
	 	N:& N_{x} = 2\mu\sech^{2}(\eta)\sinh(\eta),\quad N_{y}+iN_{z} = -[1-2\mu(\tanh(\eta)-iS)\tanh(\eta)]e^{i\Theta}\\
	 	B:& B_{x} = 2\mu S\sech(\eta), B_{y}+iB_{z} = i\mu(1-S^{2}-2iS\tanh(\eta))e^{i\Theta}
	 	\end{cases}
	 \end{align}
	 where \begin{align}
	 	\mu &= \frac{1}{1+T^2} = \frac{\nu^{2}}{\nu^{2}+\tau_{0}^{2}},\quad r =\frac{2\mu}{\nu}\sech(\eta)\\
	 	\eta &= \nu\zeta = \nu(s-2\tau_{0}t), \quad\Theta = S\eta+\nu^{2}(1+S^{2})t = \tau_{0}s+(\nu^{2}-\tau_{0}^{2})t.
	 \end{align}
	 
	 This then provides the traveling wave soliton solution that was first described by Hasimoto.
	\end{proof}
	
	\begin{remark}
		Another way one can arrive to the Hasimoto Transform and solution is by assuming the y and z components to be dependent on the x component. Using Taylor expansions of the norm of the curve where second order terms are thrown out to simplify the system of equations obtained, one can then substitute $ \Psi \equiv -(y+iz) $ to find that this $ \Psi $ satisfies the NLCSE. If one then looks for a soliton solution of this equation that is slowly varying one finds Hasimoto's traveling wave solution. 
	\end{remark}
	\subsection{Self Similar Dilating Solutions of the Vortex Filament Equation}
	In this section we present a summary the results of Banica and Vega on the discovery of self-similar dilating solutions of curves with a corner ([16],). They developed a set of solutions that only just fail to exist  in $ H^{3/2} $ and develop a corner in finite time. We present their general results as well as present a rough sketch of their proof, but we suggest further reading the works of Banica and Vega to illustrate the depth of the problem in finding such solutions and some of the applications of VFE to scattering theory for Schroedinger equations. 
	
	First it is necessary to find a way to translate back from the tangent and normal vectors a filament that satisfies VFE where curvature is allowed to become 0. Banica and Vega overcome this difficulty by creating another frame of reference $ (T, e_{1}, e _{2}) $  governed by \[
	\begin{bmatrix}
	T\\
	e_{1}\\
	e_{2}
	\end{bmatrix}_{x} = \begin{bmatrix}
	0 &\alpha &\beta\\
	-\alpha &0 &0\\
	-\beta &0 &0
	\end{bmatrix}\begin{bmatrix}
	T\\
	e_{1}\\
	e_{2}
	\end{bmatrix}.
	\]
	This is in turn a reformulation of the Hasimoto Transform since from this coordinate frame change we can define \[
	\psi(t,x) = \alpha(x,t)+i\beta(x,t)
	\] and find that this solves the NLCSE with $ A(t) = \alpha^{2}(t,0)+\beta^{2}(t,0) $. Then we can define $ N = e_{1}+ie_{2}$ and find that this is in turn equivalent to the $ \mathfrak{N} $ from section 3.2, showing that the Hasimoto transform is one way that one could obtain this new frame of reference.
	
	These vectors can then be combined to yield the evolution equations \begin{equation*}
	T_{x} = \text{Re}(\bar{\psi}N),\quad N_{x} = -\psi T, \quad T_{t} =\text{Im}(\bar{\psi}N), \quad N_{t} = -i\psi_{x}T+i(|\psi|^{2}-A(t))N
	\end{equation*}
	which can then be used to construct these vectors with the aid of imposing $ (T,N) = (e_{0},e_{1}+ie_{2}) $. Banica and Vega then define \begin{equation}
	\gamma(t,x) := P+\int_{t}^{t_{0}}(T\wedge T_{xx})(\tau, x_{0})d\tau+\int_{x}^{x_{0}}T(t,s)ds
	\end{equation}
	and find that this solves the VFE.
	
	The focus is first on self-similar dilating solutions of VFE \[ \gamma(t,x) = \sqrt{t}G\Big(\frac{x}{\sqrt{t}}\Big). \] [21] showed that the family of self-similar solutions $ \{\gamma_{a}\}_{a\in\mathbf{R}^{+}} $ is characterized by explicit curvature and torsion $ c_{a} = \frac{a}{\sqrt{t}},\quad\tau_{a} = \frac{x}{2t} $. These self-similar solutions with a corner have been shown to be analogous with the 'delta-wing' vortex and serves as a good analogy of its behavior.
	
	Using this we then look at perturbation solutions using Hasimoto's transform using the previous explicit torsion and curvature, giving the family of filament functions \begin{equation}
	\psi_{a}(t,x) = \frac{a}{\sqrt{t}}e^{ix^{2}/4t}
	\end{equation}
	which solve the NLCSE\[ i\psi_{t}+\psi_{xx}+\frac{1}{2}\Big(|\psi|^{2}-\frac{a^2}{t}\Big)\psi = 0. \] The corner of the soliton corresponds to the initial condition $ \psi_{a}(0,x) = a\delta_{x = 0} $, where $ \delta_{x = 0} $ is the delta distribution centered at $ x =0 $.
	
	Here we shall present verbatim the theorems Vega and Banica derived and afterwards we present their discussion on the results ([16]), but first we must give some definitions to provide sufficient background.
	
	\begin{definition}[Filament Function]
		We shall define as a filament function the function $ \psi $ that is obtained when carrying a solution of VFE through a transform, such as the Hasimoto transform, to create an equation that solves the NLCSE, i.e if it has curvature $ c $ and torsion $ \tau $ then the corresponding vortex filament function is \[ \psi = c(x,t)e^{-\int_{0}^{x}\tau ds} \]
	\end{definition}
	
	\begin{definition}[Weighted Space]
		We define the weighted space $ X^{\alpha}, $\[ X^{\alpha}:= \{f\in L^{2}|\zeta^{\alpha}\hat{f}(\zeta)\in L^{\infty}(|\zeta|\leq 1)  \}\]	\end{definition}
	
	\begin{theorem}[Continuation of Vortex after singularity $ \text{[16]} $]
		Let $ \gamma(1) $ be a perturbation of a self-similar solution $ \gamma_{a} $ at time $ t = 1 $ in the sense that the filament function of $ \gamma(1) $ is $ (a+u(1,x))e^{ix^{2}/4} $, with $ \partial^{k}_{x}u(1) $ small in $ X^{\alpha} $ with respect to $ a $ for all $ 0 \leq k\leq4  $, for some $ \alpha<\frac{1}{2} $.
		
		One can construct a solution $ \gamma\in C([-1,1], Lip)\cap C([-1,1]\backslash \{0\},C^{4}) $ for VFE on $ t\in[-1,1]\backslash\{0\} $ with a weak solution on the entire interval [-1,1]. The solution $ \gamma $ is unique on the subset of $ C([-1,1], Lip)\cap C([-1,1]\backslash \{0\},C^{4}) $ such that the associated filament functions at times $ \pm 1 $ can be written as $ (a+u(\pm1,x))e^{ix^{2}/4} $ with $ \partial^{k}_{x}u(\pm1) $ small in $ X^{\alpha} $ with respect to a for all $ 0\leq k\leq4 $.
		
		This solution enjoys the following properties:
		\begin{itemize}
			\item there exists a limit of $ \gamma(t,x) $ and of its tangent vector $ T(x,t) $ at time zero, and \begin{equation}
			\sup_{s}|\gamma(t,x)-\gamma(0,x)|\leq C\sqrt{|t|},\quad \sup_{|x|\geq\epsilon>0}|T(x,t)-T(0,x)|\leq C_{\epsilon}|t|^{\frac{1}{6}^{-}}
			\end{equation}
			\item $ \forall t_{1},t_{2}\in[-1,1]\backslash \{0\} $ the following asymptotic properties hold\begin{equation}
			\gamma(t_{1},x)-\gamma(t_{2},x) =O(\frac{1}{x}),\quad T(t_{1},x)-T(t_{2},x) = O(\frac{1}{x})
			\end{equation}
			\item $ \exists T^{\infty}\in \mathbb{S}^{2}, N^{\infty}\in \mathbb{C}^{3} $ such that uniformly in $ -1\leq t\leq1 $\begin{equation}
			T(t,x)-T^{\infty} = O\Big(\frac{1}{\sqrt{x}}\Big),\quad (N+iB)(t,x)-N^{\infty}e^{ia^{2}\log \frac{\sqrt{t}}{x}-ix^{2}/4t}=O(\frac{1}{\sqrt{x}}),
			\end{equation}
			\item modulo rotation and a translation, we recover at the singularity point (0,0) the same structure as for $ \gamma_{a} $:\begin{equation}
			\lim_{x\rightarrow 0^{\pm}}T(0,x) = A^{\pm}_{a},\quad \lim_{x\rightarrow 0^{\pm}}\lim_{t\rightarrow 0}(N+iB)(t,x)e^{-ia^{2}log \frac{\sqrt{t}}{x}-ix^{2}/4t}=B^{\pm}_{a}.
			\end{equation}
		\end{itemize}
		
		Where $ u $ in the previous proof is the solution to the following PDE \[ iu_{t}+u_{xx}+\frac{1}{2t}(|u+a|^{2}-a^{2})(u+a) = 0 \] which arises by taking perturbations of the filament function $ \psi_{\alpha} $ and proceeding to analyze the long term behavior of $ u $.
		\end{theorem}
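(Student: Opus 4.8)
The plan is to transfer the problem from the geometric setting of VFE to the analytic setting of the NLCSE, carry out the hard analysis at the level of the filament function, and only then reconstruct the curve. First I would pass to the parallel frame $(T,e_1,e_2)$ introduced above, whose filament function $\psi=\alpha+i\beta$ solves the NLCSE with the time-dependent constant $A(t)=\alpha^2(t,0)+\beta^2(t,0)$. Writing the data in the form $(a+u(t,x))e^{ix^2/4t}$ isolates the exact self-similar profile $\psi_a=\frac{a}{\sqrt t}e^{ix^2/4t}$ from the perturbation, and substituting into the NLCSE produces the stated equation for $u$,
\[
iu_t+u_{xx}+\frac{1}{2t}\big(|u+a|^2-a^2\big)(u+a)=0 .
\]
The entire difficulty is now concentrated in the singular coefficient $1/t$, which blows up exactly at the time the corner forms, so the whole theorem reduces to proving a sharp well-posedness and asymptotics statement for this equation on $[-1,1]\setminus\{0\}$ together with a controlled passage to $t=0$.

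The key device I would use to tame the singularity is a transformation of pseudo-conformal type, of the form $u(t,x)=t^{-1/2}\,\bar v(1/t,\,x/t)\,e^{ix^2/4t}$, which exchanges the regimes $t\to 0$ and $t\to\infty$. Under it the singular factor $1/t$ is traded for a genuine long-time problem for $v$: the formation of the corner as $t\to 0^+$ becomes the scattering behavior of a cubic Schr\"odinger flow as the new time tends to infinity, which is why this circle of ideas connects VFE to scattering theory. I would then set up a fixed-point argument in the weighted space $X^{\alpha}$ with $\alpha<\tfrac12$; the exponent is forced by the fact that $\gamma_a$ only just fails to lie in $H^{3/2}$, and propagating the four $x$-derivatives assumed in the hypothesis is precisely what is needed to recover a $C^4$ curve away from $t=0$ while keeping a Lipschitz trace across it, i.e. to land in the class $C([-1,1],\mathrm{Lip})\cap C([-1,1]\setminus\{0\},C^4)$.

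The a priori estimates I would establish rest on the vector-field method for the Schr\"odinger operator $i\partial_t+\partial_x^2$: commuting the equation with the Galilean operator $J=x+2it\partial_x$ and with $\partial_x$, and exploiting the (almost) conservation of the resulting weighted norms. Because the cubic nonlinearity is long-range, the phase cannot be gauged away exactly; instead it generates the logarithmic correction $e^{ia^2\log(\sqrt t/x)}$ that appears in the asymptotics of $(N+iB)$. Once $u$, and hence $\psi$, is controlled uniformly on $[-1,1]\setminus\{0\}$ with a trace at $t=0$, I would integrate the frame equations $T_x=\mathrm{Re}(\bar\psi N)$, $N_x=-\psi T$, $T_t=\mathrm{Im}(\bar\psi N)$, $N_t=-i\psi_x T+i(|\psi|^2-A)N$ to recover $(T,N)$, and then define $\gamma$ through the integral formula stated above, verifying directly that it solves VFE and inherits the claimed spatial decay $O(1/x)$ and $O(1/\sqrt x)$.

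The hard part will be the uniform control as $t\to 0$ together with the extraction of the sharp asymptotics. The singular coefficient makes the estimates borderline — this is exactly what pins the weight $\alpha$ strictly below $\tfrac12$ — and the long-range nonlinearity forces one to track the logarithmic phase with great care in order to identify the limiting tangent directions $A_a^{\pm}$ and the limiting frame vectors $B_a^{\pm}$, thereby showing that the reconstructed curve develops at $(0,0)$ the same corner structure as the exact self-similar solution $\gamma_a$. Uniqueness within the prescribed class hinges on the same borderline control, since it demands a contraction estimate in precisely the weighted norm in which the solution is only marginally small.
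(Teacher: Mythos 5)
Your outline follows essentially the same route the paper itself sketches for this result (and that Banica--Vega carry out in the cited works): pass to the parallel frame, isolate the self-similar profile $\psi_a=\tfrac{a}{\sqrt{t}}e^{ix^2/4t}$ so that the perturbation $u$ solves the stated NLCSE with singular coefficient $\tfrac{1}{2t}$, control $u$ via pseudo-conformal/scattering techniques in the weighted space $X^{\alpha}$, and reconstruct $\gamma$ from the frame equations and the integral formula. Note that the paper does not actually give a proof of this theorem --- it states it verbatim from [16] and defers all details to [17]--[21] --- so your sketch is consistent with, and somewhat more detailed than, what appears here; the genuinely hard analytic content (the borderline estimates as $t\to 0$ and the extraction of the logarithmic phase) is correctly identified but of course not supplied.
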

		
		\begin{theorem}[VFE with values with a corner $ \text{[16]} $]
			Let $ \gamma_{0} $ be a smooth $ C^{4} $ curve, except at $ \gamma_{0}(0) = 0 $ where a corner is located, i.e. that there exist $ A^{+} $ and $ A^{-} $ two distinct non-colinear unitary vectors in $ \mathbf{R}^{3} $ such that\begin{equation}
			\gamma_{0}'(0^{+}) =A^{+},\quad\gamma_{0}'(0^{-}) = A^{-}.
			\end{equation}
			We set a to be the real number given by the unique self-similar solution of VFE with the same corner as $ \gamma_{0} $ at time $ t=0 $. We suppose that the curvature of $ \gamma_{0}(x) $ (for $ x \neq0 $) satisfies $ (1+|x^{4}|) c(x)\in L^{2}$ and $ |x|^{\zeta}c(x)\in L^{\infty}_{(|x|\leq1)} $, small with respect to a.
			
			Then there exists $ \gamma(t,x)\in C([-1,1], Lip)\cap C([-1,1]\backslash\{0\},C^{4})
			,$ regular solution of the VFE for $ t\in[-1,1]\backslash\{0\}, $ having $ \gamma_{0} $ as value at time $ t=0 $, and enjoying all properties from the previous theorem.
		\end{theorem}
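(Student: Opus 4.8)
The plan is to reduce this statement to the preceding theorem on the Continuation of Vortex after singularity by showing that the hypotheses placed on $\gamma_0$ are exactly those needed to view $\gamma_0$ as a small perturbation, at the singular time, of one of the explicit self-similar solutions $\gamma_a$. The whole argument is therefore a matching argument plus an invocation of the continuation result as a black box.

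First I would fix the parameter $a$. The family $\{\gamma_a\}_{a\in\mathbf{R}^+}$ develops at $t=0$ a corner at the origin whose opening angle between the limiting tangent directions $A_a^{\pm}=\lim_{x\to 0^{\pm}}T(0,x)$ is a strictly monotone (indeed explicit) function of $a$, sweeping out all angles in $(0,\pi)$ as $a$ ranges over $(0,\infty)$. Given the prescribed non-collinear unit vectors $A^+,A^-$ of $\gamma_0$, I would invert this relation to obtain the unique $a$ for which $\gamma_a$ carries the same corner, and then, modulo a rigid rotation and translation, align the corner of $\gamma_a$ with that of $\gamma_0$ at $(0,0)$. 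This is precisely the $a$ named in the statement.

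Next I would pass through the filament function. Applying the Hasimoto correspondence to $\gamma_0$ produces $\psi_0=c(x)\exp\!\big(-\int_0^x\tau\,ds\big)$, while the self-similar profile corresponds to $\psi_a(0,\cdot)=a\,\delta_{x=0}$. The goal is to write the associated perturbed data in the form $(a+u)e^{ix^2/4}$ and to verify that $\partial_x^k u$ is small in the weighted space $X^\alpha$ for all $0\le k\le 4$, with $\alpha<\tfrac12$ identified with the weight exponent $\zeta$ of the hypothesis. Here the two curvature conditions enter directly: the decay condition $(1+|x|^4)c\in L^2$ controls the $L^2$ component of the $X^\alpha$ norm together with its four derivatives, whereas $|x|^{\zeta}c\in L^\infty_{(|x|\le 1)}$ controls the behavior governing how the corner singularity is approached, i.e.\ the condition $|\zeta|^{\alpha}\hat u(\zeta)\in L^\infty(|\zeta|\le 1)$ defining $X^\alpha$. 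Smallness of the curvature with respect to $a$ transfers into smallness of $u$. With the data recast this way I would invoke the continuation theorem, obtaining a solution $\gamma\in C([-1,1],\mathrm{Lip})\cap C([-1,1]\setminus\{0\},C^4)$ of VFE on $[-1,1]\setminus\{0\}$ enjoying every listed asymptotic property, and in particular for which $\lim_{t\to 0}\gamma(t,x)$ exists. It then remains to identify this limit with $\gamma_0$: by construction the corner recovered at $(0,0)$ reproduces $A^{\pm}$, and the uniqueness clause of the continuation theorem together with uniqueness of the self-similar solution carrying a given corner forces $\gamma(0,\cdot)=\gamma_0$.

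The \emph{main obstacle} is the middle step: faithfully translating the geometric decay and integrability of the curvature into functional-analytic smallness of $u$ in $X^\alpha$ up to four derivatives, and correctly matching the exponent $\alpha<\tfrac12$ to the weight $\zeta$. A secondary difficulty, feeding into the first step, is rigorously establishing that $a\mapsto(\text{corner angle of }\gamma_a)$ is a bijection onto $(0,\pi)$, so that an arbitrary non-collinear pair $A^{\pm}$ is realized by exactly one self-similar solution; both of these rest on the quantitative estimates of Banica and Vega.
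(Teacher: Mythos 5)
The paper itself offers no proof of this theorem: it is quoted verbatim from Banica and Vega ([16]), and the only indication given is the remark that its proof ``occurs naturally in the proof of'' the preceding continuation theorem. So there is no argument in the paper to compare against line by line; what can be said is that your reduction-to-the-continuation-theorem strategy is exactly the route the paper gestures at and the one actually taken in [16]. Your identification of $a$ from the corner (the angle between $A^{+}$ and $A^{-}$ is an explicit monotone function of $a$, so the matching step is sound), the passage to the filament function, and the translation of the curvature hypotheses into smallness of $u$ are all the right ingredients.

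There is, however, one genuine gap in treating the continuation theorem as a pure black box. As stated, that theorem places its smallness hypotheses on the filament function \emph{at times $t=\pm 1$}: the data is a perturbation $(a+u(\pm 1,x))e^{ix^{2}/4}$ of $\gamma_{a}$ at a regular time, and the content of the theorem is continuation \emph{through} $t=0$. Here, by contrast, the prescribed data $\gamma_{0}$ lives \emph{at} the singular time $t=0$, where the perturbation equation
\begin{equation*}
i u_{t}+u_{xx}+\frac{1}{2t}\big(|u+a|^{2}-a^{2}\big)(u+a)=0
\end{equation*}
has a singular coefficient. One cannot simply ``recast the data at the singular time and invoke the theorem''; the hard analytic step --- and the place where the hypotheses $(1+|x|^{4})c\in L^{2}$ and $|x|^{\zeta}c\in L^{\infty}_{(|x|\le 1)}$ are actually consumed --- is proving local well-posedness of this equation forward and backward from $t=0$ for such rough, weighted data, after which the asymptotic properties follow from the continuation machinery. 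Your ``main obstacle'' paragraph correctly names the translation of curvature decay into $X^{\alpha}$ smallness but locates it at the wrong time slice. A minor further point: the $\zeta$ in the hypothesis is a weight exponent in physical space while the $\zeta$ in the definition of $X^{\alpha}$ is the Fourier variable; identifying $\alpha$ with $\zeta$ requires an actual estimate relating $|x|^{\zeta}c\in L^{\infty}$ to $|\xi|^{\alpha}\hat{u}(\xi)\in L^{\infty}(|\xi|\le 1)$, not just a renaming.
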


	\begin{remark}
		The first theorem constructs a solution for VFE that although presenting a discontinuity at time $ t=0 $ and show some convergence in large values in space and for small values in time, and shows that this discontinuity is in fact a corner singularity at the point $ (0,0) $. The second theorem applies these results to the evolution of curves with this corner singularity, and the proof of this theorem occurs naturally in the proof of 3.3.4.
		
		Since we do not present explicit proofs of these results in these sections we direct the reader to Vega and Banica's papers [17], [18], [19], [20], and [21] which provide a rich exposition of the underlying structure of the equation as well as the spaces in which solutions to VFE exist.
	\end{remark}
	\subsection{Self-Similar Rotating Solutions of Vortex Filament Equation}
		In this section we derive expressions of rotating self-similar solutions of VFE in 3 dimensional space as well as those that rotate around a singular plane. We begin first by deriving those rotating in 3-space, and those in a singular plane follow as a corollary. 
		\begin{theorem}
			A derivation for the ODEs governing the motion of self-similar rotating solutions of the Vortex Filament Equation. We find, without loss of generality, that considering a self-similar curve in $ \mathbf{R}^3 $ of the form $ \Gamma(x) = (x, y(x), z(x)) $ that is rotating in space. Let $ \Gamma^{x}, \Gamma^{y}, \Gamma^{z} $ denote the components in the x, y, and z axes so as to not confuse them with the notation for partial derivatives. We obtain that rotating self-similar solutions around a single axis can be split into the following 2 cases:
			
			(1) Rotation around the z or y axis: This is the case for rotation around the y-axis, for rotation around the z-axis simply switch the values for $ \Gamma^z $ and $ \Gamma^y $. In this case we find the following system of equations describing the vector $ \Gamma $:
			\begin{align}
				&\Gamma^{x} = x\\
				&\Gamma^{y} = C_{1}x\\
				&\Gamma^{z} = z(x)
			\end{align}
			where $ z(x) $ must be a solution of the nonlinear ODE\begin{equation}
				 -\frac{z'(x)}{\sqrt{1+C_{1}^{2}+z'(x)^{2}}}=\frac{1}{2}(1+C_{1}^{2})(x^{2}+2C_{2}).
			\end{equation}
			This gives a two parameter family of solutions for rotations of this type.
			
			(2) Rotation around the x-axis: For this case the components of the curve in this situation are as follows\begin{align}
				&\Gamma^{x} = x\\
				&\Gamma^{y} = \lambda z(x)\\
				&\Gamma^{z} = z(x)
			\end{align}
			where $ \lambda \in \mathbf{R}$ and $ z(x) $ solves the following ODE \begin{equation}
				z'(x) = \pm\sqrt{ \frac{4-(1+\lambda^2)^2(z^2 + 2C_1)^2}{(1+\lambda^2)^3(z^2 + 2C_1)^2}}
			\end{equation}
			and with this equation we have a 3 parameter family of solutions for rotation around the x-axis.
		\end{theorem}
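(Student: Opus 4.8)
The plan is to exploit the rotation invariance of the Vortex Filament Equation to convert the time-dependent PDE into a single time-independent vector equation, and then to read off its scalar components in the graph parametrization. A rotating self-similar solution is a rigid rotation $\gamma(x,t) = R(t)\,\Gamma(x)$, where $R(t) = e^{t\Omega}$ is rotation about a fixed axis, $\Omega$ is the associated skew generator, and $\omega$ is the angular velocity vector with $\Omega v = \omega \times v$. Since the right-hand side $\gamma_x \times \gamma_{xx}/|\gamma_x|^3$ transforms covariantly under $SO(3)$ and $\Omega$ commutes with $R(t)$, differentiating in time and cancelling $R(t)$ reduces the flow to $\omega \times \Gamma = (\Gamma_x \times \Gamma_{xx})/|\Gamma_x|^3$, to be imposed at $t=0$. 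First I would record that for $\Gamma = (x, y(x), z(x))$ one has $\Gamma_x \times \Gamma_{xx} = (y'z'' - z'y'',\, -z'',\, y'')$ and $|\Gamma_x|^3 = (1 + y'^2 + z'^2)^{3/2}$, so that the three cases amount to matching this against $\omega \times \Gamma$ for each choice of axis (the angular speed may be normalized to $1$ using the scaling symmetry of the flow).

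For Case (1) I would take the axis to be the $z$-axis (the $y$-axis case following by interchanging coordinates), so that $\omega \times \Gamma = (-y,\, x,\, 0)$. The $z$-component of the matched equation reads $y'' = 0$, which forces $\Gamma^y = y = C_1 x$; the remaining two scalar equations then collapse to $z''/(1+C_1^2+z'^2)^{3/2} = -x$. The key observation is that the left-hand side equals $(1+C_1^2)^{-1}\,\tfrac{d}{dx}\big[ z'/\sqrt{1+C_1^2+z'^2}\big]$, so a single integration in $x$ produces the stated first-order ODE of case (1), with the integration constant absorbed into $C_2$, giving the asserted two-parameter family.

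For Case (2), rotation about the $x$-axis, I would use $\omega \times \Gamma = (0,\, -z,\, y)$. Here the $x$-component vanishes, yielding $y'z'' - z'y'' = 0$; integrating $y''/y' = z''/z'$ gives $y' = \lambda z'$, hence $\Gamma^y = \lambda z(x)$, while the two surviving equations coincide and reduce to $z''/(1+(1+\lambda^2)z'^2)^{3/2} = z$. To integrate this I would lower the order by treating $z$ as the independent variable and writing $z'' = p\,\tfrac{dp}{dz}$ with $p = z'$; the equation separates as $p\,dp/(1+\mu p^2)^{3/2} = z\,dz$ with $\mu = 1+\lambda^2$, and the substitution $w = 1+\mu p^2$ evaluates the left side to $-1/(\mu\sqrt{1+\mu p^2})$. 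Equating this to $\tfrac{1}{2}(z^2 + 2C_1)$ and solving the resulting algebraic relation for $p = z'$ yields exactly the stated ODE of case (2); together with $\lambda$, $C_1$, and the translation freedom $x \mapsto x + x_0$ in the final quadrature, this is the claimed three-parameter family.

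The routine parts are the two cross-product computations and the elementary quadratures. The two steps demanding genuine care are, first, verifying that in each case the over-determined triple of scalar equations is consistent — that the coordinate forced to be linear (Case 1) or proportional (Case 2) makes the remaining equations collapse to a single ODE rather than an incompatible pair — and, second, carrying out the order reduction in Case (2) with correct sign and constant bookkeeping, in particular tracking the normalization of the angular speed and the admissibility condition $4 \geq (1+\lambda^2)^2(z^2+2C_1)^2$ under which the square root in the case (2) ODE is real. I expect this latter Case (2) integration and its sign analysis to be the main obstacle.
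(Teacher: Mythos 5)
Your proposal is correct and follows essentially the same route as the paper: you reduce the rotating ansatz $\Gamma(x,t)=e^{t\Omega}\Gamma_0(x)$ to the time-independent equation $\omega\times\Gamma_0=(\Gamma_x\times\Gamma_{xx})/|\Gamma_x|^3$, read off components in the graph parametrization, use the vanishing component to force $y=C_1x$ (resp.\ $y'=\lambda z'$), check the remaining pair collapses to one scalar ODE, and integrate once to obtain exactly the stated first-order equations. The only differences are cosmetic (recognizing an exact derivative instead of the paper's $z'=a\tan\theta$ substitution in Case (1), and attributing the third parameter in Case (2) to translation in $x$ rather than an additive constant in $y$), so no further comparison is needed.
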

		
		\begin{proof}
			Consider a solution of VFE of the form $\Gamma (x,t)=e^{tM}\Gamma_{0}(x)$, where $\Gamma_{0}$ is a curve in $\mathbb{R}^{3}$ and $M$ is the following skew-symmetric matrix:
			$M=
			\begin{Bmatrix}
			0			&-\omega_{y} &\omega_{z}\\
			
			\omega_{y} &0 &-\omega_{x}\\
			
			-\omega_{z} &\omega_{x} &0
			\end{Bmatrix}$.
			Here the entries are scalars representing the angular velocities of each axis' rotation.
			
			Applying the equations of VFE:
			\begin{align}
			\Gamma_{t} &= \Gamma_{s}\times\Gamma_{ss}\\
			Me^{tM}\Gamma_{0} &= e^{tM}(\Gamma_{0}'\times\Gamma_{0}'')\\
			M\Gamma_{0} &= \Gamma_{0}'\times\Gamma_{0}''
			\end{align}
			
			Suppose $\Gamma_{0} = \langle x, y(x), z(x)\rangle$. Taking the derivate with respect to arc-length of this curve gives $\frac{d \Gamma_{0}}{ds} = \frac{1}{\sqrt{1+y_{x}^{2}+z_{x}^{2}}}\frac{d\Gamma_{0}}{dx}$.
			
			So then we give expressions for the first and second order derivatives with respect to arclength:
			\begin{align}
			\frac{d\Gamma_{0}}{ds} &= [1+y_{x}^{2}+z_{x}^{2}]^{-1/2}\langle 1,y'(x),z'(x)\rangle\\
			\frac{d^{2}\Gamma_{0}}{ds^{2}} &= 	[1+y_{x}^{2}+z_{x}^{2}]^{-1/2}(\frac{d}{dx})(\frac{d\Gamma_{0}}{ds})\\
			\frac{d^{2}\Gamma_{0}}{ds^{2}} &= [1+y_{x}^{2}+z_{x}^{2}]^{-2}\{-(y''y'+z''z')\langle 1, y',z'\rangle+[1+y_{x}^{2}+z_{x}^{2}]\langle 0,y'',z''\rangle\}
			\end{align}
			Plugging our results into (161) and using equations (162)-(164), as well as the definition of $M$ we have the following system of ODE's describing the behavior of solutions:
			\begin{align}
			\omega_{z}z-\omega_{y}y &=(y'(x)z''(x)-z'(x)y''(x))[1+y_{x}^{2}+z_{x}^{2}]^{-3/2}\\
			\omega_{y}x-\omega_{x}z &= -z''(x)[1+y_{x}^{2}+z_{x}^{2}]^{-3/2}\\
			\omega_{x}y-\omega_{z}x &= y''(x)[1+y_{x}^{2}+z_{x}^{2}]^{-3/2}
			\end{align}
			
			\textbf{Rotation around the y-axis}:
			In this situation the constants $\omega_{x} =\omega_{z} = 0$ and $\omega_{y} =1 $, where the latter is taken for simplicity. Then the equations (insert numbers here) become:
			\begin{align}
				-y &=(y'(x)z''(x)-z'(x)y''(x))[1+y_{x}^{2}+z_{x}^{2}]^{-3/2}\\
				x &=-z''(x)[[1+y_{x}^{2}+z_{x}^{2}]^{-3/2}\\
				0 &= y''(x)[1+y_{x}^{2}+z_{x}^{2}]^{-3/2}	
			\end{align} 
			so that by the last equation we have that $ y(x) = C_{1}x $. This then transforms (165) and (166) into the second order nonlinear differential equation:\begin{align}
				x = -z_{xx}[1+C_{1}^{2}+z_{x}^{2}]^{-3/2}.
			\end{align}
			
			This equation can be explicitly solved and reduced to a first order differential equation by making the substitutions $ a^{2} = 1+C_{1}^{2} $ and $ z'(x) = a\tan(\theta) $, we then proceed as follows:\begin{align}
				\frac{1}{2}x^{2}+C_{2} &= -\int \frac{a\sec^{2}(\theta)}{a^{3}(1+\tan^{2}(\theta))^{3/2}}d\theta\\
				&= -\int \frac{1}{a^{2}}\cos(\theta)d\theta\\
				&= -\frac{1}{a^{2}}\sin(\theta)\\
				\frac{a^{2}}{2}(x^{2}+2C_{2})&= -\frac{z'(x)}{\sqrt{a^{2}+z'(x)^{2}}}\\
				\frac{1}{2}(1+C_{1}^{2})(x^{2}+2C_{2})&= -\frac{z'(x)}{\sqrt{1+C_{1}^{2}+z'(x)^{2}}}
			\end{align}
			and this is the ODE whose 2 parameter families of solutions provide rotating self-similar solutions around either the z or y-axis.
						
			\textbf{Rotation around the x-axis:}	For rotation around the x-axis we obtain the following system of equations:	
			\begin{align}
				0 &=(y'(x)z''(x)-z'(x)y''(x))[1+y_{x}^{2}+z_{x}^{2}]^{-3/2}\\
				z&=z''(x)[1+y_{x}^{2}+z_{x}^{2}]^{-3/2}\\
				y &= y''(x)[1+y_{x}^{2}+z_{x}^{2}]^{-3/2}.
			\end{align}
			
			The first equation it gives us that $ y'(x) = \lambda z'(x) $. This transforms the other equations into the second order non-linear differential equation: \[ z = z''[1+(\lambda^{2}+1)z_{x}^{2}]^{-3/2}. \]
			
			We now solve this equation and turn it into a first order nonlinear equation:\begin{align}
				z'z &= \frac{z'z''}{(1+(1+\lambda^{2})(z')^{2})^{3/2}}\\
				\frac{1}{2}z^{2}+C_{1} &= \int \frac{z'z''}{(1+(1+\lambda^{2})(z')^{2})^{3/2}}dx.\\			.
			\end{align}
			As in the previous problem we substitute $ u = (1+\lambda^{2})(z')^{2}(x) $, $ (1+\lambda^{2})^{-1}du = 2z'z'' dx$, continuing with the integration:\begin{align}
				(1+\lambda^{2})(z^{2}+2C_{1}) &= \int {(1+u)^{-3/2}}du.\\
				(1+\lambda^{2})(z^{2}+2C_{1}) &= -2(1+(1+\lambda^{2})(z')^{2})^{-1/2}\\
				(z')^{2}&=\frac{4}{(1+\lambda^{2})^{3}(z^{2}+2C_{1})^{2}}-\frac{1}{(1+\lambda^{2})}\\
				z'(x) &= \pm\sqrt{ \frac{4-(1+\lambda^2)^2(z^2 + 2C_1)^2}{(1+\lambda^2)^3(z^2 + 2C_1)^2}}
			\end{align}
				
			Finally, using this we can derive the expression for $ y'(x) $ using that $ y'(x) = \lambda z'(x) $, we have that $ y(x) = \lambda z(x)+C_{2} $. Plugging this into our previous expression gives:\begin{equation}
				y'(x) = \lambda \sqrt{ \frac{4-(1+\lambda^2)^2((\frac{y}{\lambda}+C_{2})^2 + 2C_1)^2}{(1+\lambda^2)^3((\frac{y}{\lambda}+C_{2})^2 + 2C_1)^2}}
			\end{equation} 
				
			Putting all results together gives a final expression for how the curves will behave if we assume the vortex filament ot be rotating around a particular axis.
		\end{proof}
		
		\begin{corollary}
			We find that if a vortex filament rotates inside of a singular plane it must obey the following second order non-linear differential equation \begin{align}
						f(x) +\frac{f''(x)}{(1+f'(x))^{3/2}} = 0
		\end{align}
		\end{corollary}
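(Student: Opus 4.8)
The plan is to obtain this as a direct specialization of the preceding theorem rather than re-deriving everything from scratch. I would begin from the reduced stationary system $M\Gamma_{0}=\Gamma_{0}'\times\Gamma_{0}''$ (equation (161)) together with its three scalar components (165)--(167), which already encode the arclength normalizing factors $[1+y_{x}^{2}+z_{x}^{2}]^{-3/2}$. The first task is to translate the phrase ``rotates inside a singular plane'' into an admissible choice of the skew-symmetric generator $M$ and of the profile $\Gamma_{0}$. The key geometric input comes from the earlier Planar Translating lemma: a curve evolving by VFE moves in the binormal direction $\kappa B$, so a curve confined to a single plane is pushed \emph{out} of that plane unless its motion is realized as a rigid rotation of the plane about an axis \emph{lying inside} it. This already tells me that the admissible rotation is not about the normal to the plane but about an in-plane axis.

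Concretely, using that curvature is invariant under rigid motions (as exploited in the proof of the planar lemma), I would place the plane as the $x$--$y$ plane and the rotation axis as the $x$-axis, writing $\Gamma_{0}(x)=(x,f(x),0)$ and taking $M$ to be rotation about the $x$-axis, i.e. $\omega_{y}=\omega_{z}=0$ and $\omega_{x}=\pm1$. Substituting $z\equiv z'\equiv z''\equiv0$ into (165)--(167), I expect the first two scalar equations to collapse to the trivial identity $0=0$: the cross-product $\Gamma_{0}'\times\Gamma_{0}''$ has no component in the plane, and the $z$-equation degenerates because every $z$-derivative vanishes. The single surviving equation is the third one, which reads $\pm f = f''\,[1+(f')^{2}]^{-3/2}$. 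Recognizing the right-hand side as the signed curvature $\kappa=f''/(1+(f')^{2})^{3/2}$ of the graph $y=f(x)$, this is exactly the asserted relation $f+\kappa=0$, with the overall sign fixed by the orientation of $M$ (equivalently, by the chosen direction of the binormal).

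The step I expect to be the main obstacle is the consistency check underlying the first paragraph: one must verify that confining the rotating self-similar filament to a single plane is compatible, and not over-determined, precisely when the axis lies in the plane, and that the alternative---rotation about the axis perpendicular to the plane---forces $M\Gamma_{0}$ to lie in the plane while $\Gamma_{0}'\times\Gamma_{0}''=\kappa B$ is normal to it, so that $\kappa\equiv0$ and the filament degenerates to a straight line. Making this dichotomy precise, and thereby justifying that the in-plane-axis configuration is the only nontrivial ``singular plane'' case, is the crux; once it is settled the reduction of (165)--(167) is routine. I would close by remarking that the nonlinear term appearing in the statement is literally the graph curvature $f''/(1+(f')^{2})^{3/2}$, so the corollary says that a planar rotating self-similar filament is one whose curvature equals minus its height function.
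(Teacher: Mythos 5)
Your proposal is correct and follows essentially the same route as the paper: the paper substitutes the explicit ansatz $\Gamma(x,t)=(x,\cos(t)f(x),-\sin(t)f(x))$ into the system (165)--(167), which is exactly your specialization to rotation about an in-plane axis with $z\equiv 0$, and its subsequent remark confirms your view of the corollary as the $\lambda=0$ case of Theorem 3.4.1. Your added justification that the rotation axis must lie in the plane (since rotation about the normal would force $\kappa\equiv 0$) is a sound observation the paper leaves implicit, and you are right that the denominator should read $(1+f'(x)^{2})^{3/2}$, the graph curvature, as the paper's own later computations confirm.
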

		
		\begin{proof}
			The general form for a curve rotating in a plane is of the form \begin{equation}
				\Gamma(x,t) = (x, \cos(t)f(x),-\sin(t)f(x))
			\end{equation}
			We now take the appropriate time and space derivatives \begin{align}
				\partial_{t}\Gamma &= (0, -\sin(t)f(x),-\cos(t)f(x))\\
				\partial_{x}\Gamma &= (1, \cos(t)f'(x),-\sin(t)f'(x))\\
				\partial_{xx}\Gamma&=(0, \cos(t)f''(x),-\sin(t)f''(x))
			\end{align}
			
			We can now plug these equations into our previous system of ODEs (insert equation numbers here) for rotating solutions in 3 dimensions by setting the left hand side as the components of $ \partial_{t}\Gamma(x,t) $ as well as taking $ y(x,t) = \cos(t)f(x) $ and $ z(x,t) = -\sin(t)f(x) $ and changing the right hand derivatives to partial derivatives with respect to x gives us:\begin{align}
				0 &= 0\\
				\partial_{t}y(x,t) &= \sin(t)f''(x)[1+f'(x)^{2}]^{-3/2}\\
				\partial_{t}z(x,t) &= \cos(t)f''(x)[1+f'(x)^{2}]^{-3/2}
			\end{align}
			
			Substituting the values for $ y_{t}(x,t) $ and $ z_{t}(x,t) $ gives the nonlinear differential equation: \begin{equation}
				f(x) +\frac{f''(x)}{(1+f'(x))^{3/2}} = 0
			\end{equation}
			We can further simplify this equation to turn it into a first degree ODE:\begin{align}
				ff' &= \frac{-f'f''}{(1+f'(x)^{2})^{3/2}}\\
				f(x)^{2}+2C_{1} &= -2(1+f'(x)^{2})^{-1/2}\\
				f'(x) &= \sqrt{\frac{4}{(f(x)^{2}+2C_{1})^{2}}-1}
			\end{align}
			
			This then gives the 2 parameter family of solutions of rotating solutions in the plane.
		\end{proof}
		
		\begin{remark}
			It is of interest to note that the previous result could have been achieved from theorem 3.4.1 by saying that it was a solution rotating around the x-axis with $ \lambda = 0 $. This then gives the same ODE governing the family of solutions for planar rotating vortex filaments. We also provide the following argument that demonstrates the solvability of equations (186), (187) and (199):
		\end{remark}
		
		Using the differentiability of $ z(x) $ we can create an inverse function such that now $ x $ is a function of z. Then from basic analysis we would have that the derivative with respect to z of the inverse function would then be \begin{equation}
			\frac{dx}{dz} = \sqrt{\frac{(1+\lambda^{2})^{3}(z^{2}+2C_{1})^{2}}{4-(1+\lambda^{2})^{2}(z^{2}+2C_{1})^{2}}}
		\end{equation}
		so that then x exists as a smooth increasing function on the range \[-\frac{2}{(1+\lambda^{2})}-2C_{1}<z^{2}<\frac{2}{(1+\lambda^{2})}-2C_{1} \] so then we have $ z(x) $ to be a smooth, increasing and bounded function of x.
	\section*{Acknowledgments}
		I would like to thank my graduate student advisor Jiewon Park for the direction and support in the development of this paper, as well as directing the goal of this paper during its development. I would also like to extend my gratitude towards Professor Tobias Colding for providing the initial papers that led to the development of this paper. I would finally extend my gratitude towards the MIT mathematics department and UROP+ coordinators, as well as the Undergraduate Research Opportunity office for providing funding over the summer towards the creation of this paper.
	\newpage


\begin{thebibliography}{1}
		\bibitem{[1]} Evans, Lawrence C. “5.” {\em Partial Differential Equations}, Vol. 19, American Mathematical Society, 2015, pp. 239–292.

		\bibitem{[2]} R. Haslhofer. Lectures on curve shortening flow, 2016.
		
		\bibitem{[3]} G. Huisken and C. Sinestrari. Mean curvature flow singularities for mean convex surfaces. \textit{Calc. Var. Partial Differential Equations,} 8(1):1-14, 1999.
		
		\bibitem{[4]} G. Huisken. Asymptotic behavior for singularities of the mean curvature flow. \textit{J. Differential Geom.,} 31(1): 285-299, 1990.
		
		\bibitem{[5]} G. Huisken. A distance comparison principle for evolving curves. \textit{Asian J. Math.,} 2(1):127-133, 1998.
		
		\bibitem{6]} M. A. Grayson. The heat equation shrinks embedded plane curves to round point. \textit{J. Differential Geom.,} 26(2):285-314, 1987.
		
		\bibitem{[7]} B. White. A local regularity theorem for mean curvature flow. \textit{Ann. of Math. (2),} 161(3):1487-1519, 2005.
		
		\bibitem{[8]} K. Brakke. \textit{The motion of a surface by its mean curvature,} volume 20 of \text{Mathematical Notes}. Princeton University Press, Princeton, N.J., 1978.
		
		\bibitem{[9]} R. S. Hamilton. Harnack estimate for the mean curvature flow.\textit{J. Differential Geom.,} 41(1):215-226, 1995. 
		
		\bibitem{[10]} H. P. Halldorsson. Self-similar solutions to the curve shortening flow. 2010. ArXiv:1007.1617.
		
		\bibitem{[11]} D. J. Altschuler, S. J. Altschuler, S. B. Angenent and L. F. Wu. The zoo of solitons for curve shortening in $\mathbb{R}^n$.
		
		\bibitem{12]} U. Abresch and J. Langer, The normalized curve shortening flow and homothetic solutions,
		\textit{J. Differential Geom. 23, no. 2} 175–196. MR 845704 (88d:53001)
		
		\bibitem{[13]} K. Ecker. Regularity theory for mean curvature flow. \textit{Progress in Nonlinear Differential Equations and their Applications}, 57, Birkh¨auser Boston Inc., Boston, MA, 2004. MR 2024995 (2005b:53108)
		
		\bibitem{[14]} R. J. Arms and F. R. Hama. Localized-induction concept on a curved vortex and motion of an elliptic vortex ring. \textit{Phys. Fluids 8,} 553 1965; doi: 10.1063/1.1761268. 
		
		\bibitem{[15]} H. Hasimoto. A soliton on a vortex filament. \textit{Journal of Fluid Mechanics, vol 51, no3,}477-485, 1972.
		
		\bibitem{[16]} V. Banica. Evolution by the vortex filament equation of curves with a corner. ArXiv:1308.4430.
		
		\bibitem{[17]} V. Banica and L. Vega. On the stability of a singular vortex dynamics. \textit{Comm. Math. Phys. 286,} 593-627, 2009.
		
		\bibitem{[18]} V. Banica and L. Vega. Scattering for 1D cubic NLS and singular vortex dynamics. \textit{J. Eur. Math. Soc. 14,} 209-253, 2012.
		  
		\bibitem{[19]} V.  Banica and L. Vega. Stability of the self-similar dynamics of a vortex filament. To appear in \textit{in Arch. Ration. Mech. Anal.} 
		
		\bibitem{[20]} V. Banica and L. Vega. The initial value problem for the binormal flow with rough data. ArXiv:1304.0996.
		
		\bibitem{[21]} M. Lakshmanan, T. W. Ruijgrok and C. J. Thompson, On the the dynamics of a continuum spin system, \textit{Physica A, 84}, 1976. 577–590 
		
		\bibitem{[22]} L. S. Da Rios. On the motion of an unbounded fluid with a vortex filament of any shape. \textit{Rend. Circ. Mat. Palermo 22,} 117, 1906.
		
		\bibitem{[23]}S. J. Altschuler. Singularities of the curve shrinking flow for space curves. \textit{J. Differential Geom. 34, no2} 491-514, 1986. MR1131441 (93a:58036).
		
		\bibitem{[24]} B. Andrews and P. Bryan. Curvature bound for curve shortening flow via distance comparison and a direct proof of Grayson's theorem. ArXiv:0908.2682.
		
		\bibitem{[25]} J. Laue. A new length estimate for curve shortening flow and low regularity initial data. \textit{Geom. Funct. Anal.,} 23(6):1934-1961, 2013.
		
		\bibitem{[26]} M. Gage and R. S. Hamilton. The heat equation shrinking convex plane curves. \textit{J. Differential Geom.,} 23(1):69-96, 1986.
		
		\bibitem{[27]} S. Brendle. Two-point functions and their applications in geometry. \textit{Bull. Amer. Math. Soc.,} 51(4):581-596, 2014. 
		
		\bibitem{[28]} S. Angenent. On the formation of singularities in the curve shortening flow. \textit{J. Differential Geom.,} 33(3):601-633, 1991.
	\end{thebibliography}
\end{document}